\newtheorem{theorem}{Theorem}
\newtheorem{lemma}[theorem]{Lemma}
\newtheorem{definition}[theorem]{Definition}
\allowdisplaybreaks \numberwithin{equation}{section}
\title{A deterministic-statistical approach to reconstruct moving sources using sparse partial data}
\author{Yanfang Liu\thanks{Michigan Technological University, Houghton, MI, USA. {\it yanfangl@mtu.edu} },\ 
Yukun Guo\thanks{Harbin Institute of Technology, Harbin, P. R. China, {\it ykguo@hit.edu.cn}}\quad
and Jiguang Sun\thanks{Michigan Technological University, Houghton, MI, USA. {\it jiguangs@mtu.edu} }
}
\date{}
\begin{document}
\maketitle

 \captionsetup[figure]{name={Fig },labelsep=period}

\begin{abstract}
    We consider the reconstruction of moving sources using partial measured data. A two-step deterministic-statistical approach is proposed. In the first step, an approximate direct sampling method is developed to obtain the locations of the sources at different times. Such information is coded in the priors, which is critical for the success of the Bayesian method in the second step. The well-posedness of the posterior measure is analyzed in the sense of the Hellinger distance. Both steps are based on the same physical model and use the same set of measured data. The combined approach inherits the merits of the deterministic method and Bayesian inversion as demonstrated by the numerical examples. 
    \\
    
    Keywords: inverse source problem, moving point source, direct sampling method, Bayesian inversion, sparse data
\end{abstract}

\section{Introduction}

The detection and identification of moving targets using waves have many important applications such as radar, underwater acoustics, and through the wall imaging \cite{Baum1999, Gilbert2004, Cheney2009, LiEtal2012}. 
 In this paper, we consider the reconstruction of moving acoustic point sources, which can be mathematically formulated as a time domain inverse source problem. Several methods were proposed recently from the inverse problem community for the time-domain inverse problems of wave equations, e.g., the algebraic method, the time-reversal techniques, the sampling-type methods and the method of fundamental solutions \cite{NIO12, Ohe20,GMC13, GF15, CGMS20}. See also \cite{HKLZ19} for a uniqueness result for the inverse moving source problem. In practice, the measured data are usually sparse, partial and compromised by noises. Classical methods such as the linear sampling method and the direct sampling method might not provide satisfactory results. In general, the performance deteriorates significantly as the data become fewer \cite{LiuSun2019JCP,Guo2016AA}.

In recent years, the Bayesian method became popular for inverse problems \cite{JariE.Somersalo2006, Stuart2010}. For statistical inference, the parameters in the physical model are viewed as random variables. 
Using the Bayes' formula, a posterior distribution of the unknown parameter can be formally derived. The main task is then to explore the posterior density function numerically.
Statistical estimates such as the MAP (maximum a posteriori
estimate) or CM (conditional mean) are used to characterize the unknown parameter.
For some recent applications of the Bayesian method for inverse problems, we refer readers to \cite{LiuEtal2019, Kaipio2019, Yang2020IP} and the references therein.

Motivated by the detection of the moving object behind the wall using ultrawideband (UWB) radar \cite{LiEtal2012, Wang2018}, the current paper considers a time-domain inverse problem to determine the moving path of a radiating source with partial data. We proposed a two-step deterministic-statistical approach. In the first step, we develop an approximate direct sampling method (ADSM) to obtain the rough locations of the sources at different times. Since only partial data are available, the locations are usually not accurate compared to the case of full data. Nonetheless, the ADSM does provide useful information of the unknown sources. In the second step, a Bayesian method is used to obtain more detailed information. The priors, which are built on the information obtained in the first step, play a key role for the success of the Bayesian inversion. The idea of utilizing a combined deterministic-statistical approach was first employed to treat the inverse acoustic scattering problem to reconstruct an obstacle with limited-aperture data in \cite{LDS20}. Later, the approach was used to determine both the locations and strengths of static multipolar sources from partial radiated acoustic fields \cite{LLSX20}. For recent developments on path reconstructions of moving point sources, we refer the readers to \cite{GMC13, GHHLL16, CGMS20, NIO12, Wang2018, WGLL17, WGLL19}. 
%  and the inverse elastic scattering problems with partial data \cite{LiSunXu2020}

Inverse problems with partial measurements arise from many applications in science and engineering.
Compared with the case of full measurements, it is more challenging in general. The reconstruction deteriorates when the data are less or the observation aperture becomes smaller. In this paper, for partial data, we propose a deterministic-statistical approach to reconstruct the moving paths of acoustic sources. The contributions are as follows: 1) an approximate direct sampling method is developed to obtain rough locations of moving acoustic sources; 2) an MCMC (Markov chain Monte Carlo) algorithm is employed to improve/construct the moving paths of the sources using the information obtained by the ADSM; and 3) the well-posedness of the posterior measure is analyzed under the Hellinger distance. Both steps, ADSM and MCMC, are based on the same physical model and use the same set of measured data. The combined approach inherits the merits of the deterministic method and Bayesian inversion as demonstrated by the numerical examples.

The rest of the paper is organized as follows. In Section \ref{sec:model_problem}, the direct and inverse problems are introduced. An approximate direct sampling method is developed in Section \ref{sec:direct_sampling}. The Bayesian inversion scheme is presented in Section \ref{sec:bayesian_inversion}. Section \ref{sec:numerical_results} contains several validating numerical examples. Finally, we draw some conclusions in Section \ref{sec:conclusion}.

\section{Direct and Inverse Source Problems} \label{sec:model_problem}

We begin with the mathematical model governing the wave source radiation. Denote by $\Omega \subset \mathbb{R}^3$ a simply-connected bounded domain. Consider the wave propagation due to a point excitation in a homogeneous and isotropic media. The speed of the wave in the background media is denoted by the constant $c$. Let $d_{\Omega}:=\sup_{x,y\in \Omega} \|x-y\|$ and $T >2d_{\Omega}/c$ be the terminal time for measuring the radiated data. 

For a single point source, the radiated wave field $u$ satisfies the following wave equation
\begin{subequations}\label{wave equation}
\begin{equation}\label{eq:wave_equation}
c^{-2}\partial_{t t}u(x,t)- \Delta u(x,t)=\lambda (t)\delta(x-z(t)) \quad \text{in}\; \mathbb{R}^3 \times (0,T],
\end{equation}
with initial conditions
\begin{equation}\label{initial condition}
u(x,0)= \partial_{t} u(x,0)= 0 \quad \text{in}\; \mathbb{R}^3, 
\end{equation}
\end{subequations}
where $\delta$ denotes the Dirac delta distribution, $\lambda:\mathbb{R}\to \mathbb{R}$ is the temporal pulse (magnitude of the impulsive load) and $\lambda(t)=0$ for $t<0$, $z: \mathbb{R}_{+}\rightarrow \Omega$ is the trajectory of the moving point source. We refer to \cite{Ohe20} for a regularity result on the solution of problem \eqref{wave equation} in some spatial-temporal Sobolev spaces.
 
Throughout the paper, we assume that $z \in \mathcal{C}^1[0, T]$. Furthermore, we assume that the point source moves slowly compared with the speed of the wave $c$, that is, its velocity $v:=\mathrm{d}z/\mathrm{d}t$ satisfies 
\begin{equation}\label{vtllc}
\|v(t) \|\ll c, \quad \forall t\in[0, T].
\end{equation}
This is the case for many radar applications such as through-wall imaging radars where $c$ is the speed of electromagnetic waves and $v(t)$ is the speed of the visually obscured target, e.g., a human being inside the building \cite{LiEtal2012, Wang2018}.
 
The direct problem can be described as follows: given $z(t)$ and $\lambda(t)$, find the radiating field $u$ satisfying \eqref{wave equation}. It is well known that the solution to this problem is given explicitly by the Li\'{e}nard–Wiechert retarded potential (see, e.g., \cite{Jackson2007}):
\begin{equation}\label{exact solution 1}
    u_z(x,t) = \dfrac{\lambda(\tau)}{4 \pi \|x-z(\tau)\|\left(1-\frac{v(\tau) \cdot (x-z(\tau))}{c\|x-z(\tau)\|}\right)},\quad t\in(0,T],
\end{equation}
where the retarded time $\tau$ is the solution of the equation
$$ 
t = \tau+ c^{-1}{\|x-z(\tau)\|}.
$$
When $z(t)\equiv z_0$ is a static point source, the solution to \eqref{wave equation} is simply
\begin{equation}\label{exact solution 2}
    u_{z_0}(x,t) = \dfrac{\lambda(t- c^{-1}\|x-z_0\|)}{4\pi \|x-z_0\|}.
\end{equation}
For a fixed small enough $T$, since $\|v(t)\| \ll c, t \in [0, T]$, it holds that 
\[
|t-\tau| \ll 1 \quad \text{and} \quad \left|\frac{v(\tau) \cdot (x-z(\tau))}{c\|x-z(\tau)\|} \right| \ll 1.
\]
Let $U_T:=\Gamma \times [0, T]$ and define $\|u(x,t)\|_{U_T}^2 := \int_0^T \int_\Gamma |u(x,t)|^2 {\rm d}x {\rm d}t$. We have that
\begin{equation}\label{Duzuz0}
\|u_z(x,t)-u_{z_0}(x,t)\|^2_{U_T} := \int_0^T \int_\Gamma |u_z(x,t)-u_{z_0}(x,t)|^2 {\rm d}x {\rm d}t \ll 1.   
\end{equation}
% The inverse problem is motivated by the detection of the moving object behind the wall using ultrawideband (UWB) radar \cite{Wang2018}. The radar sends many short wavelets in a period of time $[0,T]$ and the received radar signals are utilized to identify the path of the target. 

In this paper, we consider the time domain inverse source problem (TISP) of recovering the trajectory $z(t)$ of a moving point source for \eqref{wave equation} from the measurement data $u(x,t)|_{\Gamma \times (0,T]}$, where $\Gamma\subset\mathbb{R}^3\backslash\overline{\Omega}$ is a measurement surface (see Fig~\ref{fig:illustration} for a geometric illustration of the problem). In particular, we are interested in the case of partial data, e.g., when $\Gamma$ is a fraction of a sphere. 
The temporal pulse $\lambda(t)\in \mathcal{C}^1[0, T]$ is assumed to be a periodic function with period $p\in\mathbb{R}_{+}$. 
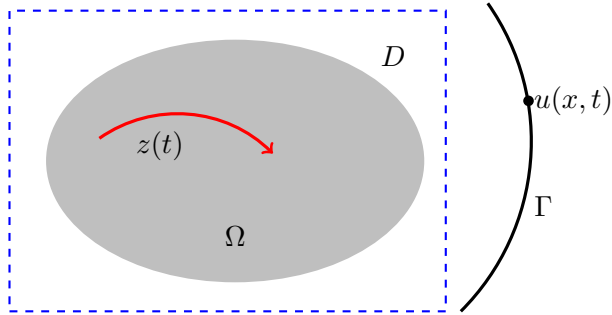
\begin{figure}[h!]
	\centering
	\begin{tikzpicture}[thick]
	\pgfmathsetseed{3}
	\filldraw [lightgray] (0, 0) ellipse [x radius=2.5cm, y radius=1.6cm] ;
	\draw node at (0, -1) {$\Omega$};
	\draw node at (2.1, 1.4) {$D$};
	\draw[<-] (0.5, 0.1) arc(45:125:1.8cm) [very thick, red];
	\draw node at (-1, 0.2) {$z(t)$};
	\draw (3, -2) arc(-45:35:3.2cm) [very thick, black]; 
	\draw node at (4.5,0.8) {$u(x,t)$};
	\draw node at (4.1,-0.6) {$\Gamma$};
    \fill [black] (3.9,0.8) circle (2pt);
    \draw [blue, dashed] (-3, -2) rectangle (2.8, 2); % sampling domain D
	\end{tikzpicture}
	\caption{An illustration of the time-dependent inverse source problem.} 
\label{fig:illustration}
\end{figure}

\section{Direct Sampling Method} \label{sec:direct_sampling}

In this section we shall develop an approximate direct sampling method (ADSM) for the inverse problem {TISP} to recover the path $z(t)$ from the measurement data $u(x,t)|_{\Gamma \times (0,T]}$. The sampling-type methods share the basic idea of identifying the unknown target by constructing some indicator functions over the probing/sampling region \cite{CK19}. Once the indicators are evaluated, the geometrical information (e.g., location and shape) of the target could be recovered using a point-wise criterion to determine whether the sampling point lies inside or outside the target. 

As one of the sampling-type methods, the direct sampling method became popular recently due to several compelling features. The direct sampling method is fast and easy to implement. Moreover, they are robust to noise-contaminated measurements in general. However, the direct sampling methods usually require a large amount of measurements and are inherently qualitative. Nonetheless, the direct sampling methods can provide useful information even for partial measurement data. 

%Exemplary sampling-type approaches for typical time-harmonic inverse scattering problems involve the linear sampling method, the factorization method, the direct sampling/orthogonal sampling methods and the extended sampling method \cite{LS18}, to name a few. In particular, we refer to \cite{ZGLL19} for a recent study on the direct sampling method for recovering static multipolar point sources. Recently, the sampling-type methods have been applied to time domain inverse scattering problems as well in various scenarios, for instance, see \cite{GMC13, GHHLL16, CMG17} and the references therein. Some efforts have also been made to reconstruct the trajectory of a moving point source using the direct sampling approaches \cite{WGLL17, WGLL19, CGMS20}. 

% On the other hand, the Bayesian approaches are data-driven and hence have the capability of compensating the lack of measured data and produce rather accurate reconstructions in the statistical sense. Therefore, it is reasonable to incorporate the statistical techniques into classical deterministic methods to alleviate the amount of required measurements. Motivated by these discussions, in this work we aim to combine the existing direct sampling scheme with a statistical model, in an attempt to reconstruct the motion trajectory more efficiently and effectively, especially in the extremely challenging case of limited aperture data. 

Let $D$ be a bounded domain in $\mathbb{R}^3$ such that $\Omega \subset D$. The measurement sensors are located on a surface $\Gamma \subset \mathbb{R}^3\setminus \Bar{D}$ (see Fig.~\ref{fig:illustration}). Define 
\begin{equation}\label{phixty}
  \phi(x,t;y) = \frac{\lambda(t-c^{-1}\|x-y\|)}{4 \pi \|x-y\|},\quad (x,t;y) \in \Gamma \times (0,T] \times D.
\end{equation}
Let $\{T_1,\cdots,T_J\}$ be a partition of $(0,T]$ such that $T_{j}=( (j-1)p, jp], j=1,2,\cdots, J$. Since the temporal function  $\lambda(t)$ is a periodic function with period $p$ and $\|v\|\ll c$, the displacement of the point source is small on each $T_j$. Thus $z(t)$ can be approximated by a stationary point $y \in \mathbb{R}^3$ on $T_j$.

Given the measurement data  $u(x,t)$ with $x \in \Gamma$ and $t \in T_j$, $j=1,2, \cdots, J$, we propose the following indicator function
\begin{equation}\label{IndicatorC}
    I(y; T_j):= \int _{T_j} \dfrac{|\langle u(x,t),\phi(x,t;y)\rangle_{L^2(\Gamma)} |}{ \|u(x,t)\|_{L^2(\Gamma)} \|\phi(x,t;y)\|_{L^2(\Gamma)} }~\mathrm{d} t, \quad y \in D,
\end{equation}
where $\langle\cdot,\cdot\rangle_{L^2(\Gamma)}$ denotes the usual $L^2$ inner product. Note that, in \eqref{IndicatorC}, instead of the exact radiating field for a moving point source $y \in D$, we use an approximated field $\phi$ for a stationary point source in \eqref{phixty}. This is why we call the following algorithm the approximated direct sampling method (ADSM). It is clear that the use of $\phi$ simplifies the computation of the indicators.

The numerical results in Section \ref{sec:numerical_results} show that this indicator function approaches its maximum when the sampling point $y$ tends to the exact instantaneous location of the point source at each time steps. 
 
Now we are ready to present the approximate direct sampling method for the TISP.
\begin{itemize}
    \item[] {{ ADSM:} Approximate Direct Sampling Method}
    \item[] {\bf Step 1:} Collect the data $u(x,t)$ at the measurement points $\{x_l\}\in\Gamma$, a sequence of discrete time $\{t^n_j\}\in (0, T)$ and for each $j$, $\{t^n_j\}\in T_j$;
    \item[] {\bf Step 2:} Generate the sampling points $\mathcal{T}_h$ for $D$;
    \item[] {\bf Step 3:} Compute $I(y;T_j)$ for the sub-interval $T_j$ at sampling point $y \in \mathcal{T}_h$;
    \item[] {\bf Step 4:} Locate the maximum of $I(y_j; T_j)$ for each $j$, take the corresponding $y_j$ as an approximation of $z(t)$ on $T_{j}$. The sequence of locations $\{y_j\}_{j=1}^J$ is the reconstructed moving path.
\end{itemize}

%\begin{table}[h!]
%\centering
%\begin{tabular}{cp{.8\textwidth}}
%\toprule
%\multicolumn{2}{l}{{\bf ADSM:}\quad Approximate Direct Sampling Method}\\
%\midrule
%{\bf Step 1:} & Collect the data $u(x,t)$ at the measurement points $\{x_l\}$ on $\Gamma$ and a sequence of discrete time $\{t^n_j\}\in (0, T)$ and for each $j$, $\{t^n_j\}\in T_j$;\\
%{\bf Step 2:} & Generate the sampling mesh $\mathcal{T}_h$ for $D$; \\
%{\bf Step 3:} & Compute $I(y;T_j)$ for the sub-interval $T_j$ and sampling point $y \in D$; \\
%{\bf Step 4:} & Locate the maximum of $I(y_j; T_j)$ for each $j$, take the corresponding $y_j$ as an approximation of $z(t)$ where $t\in T_{j}$. The sequence $\{y_j\}_{j=1}^J$ is the reconstructed moving path.\\
%\bottomrule
%\end{tabular}
%\end{table}

The performance of the {ADSM} depends on how much data $u(x,t)$ are available. 
If the measured data are complete or nearly complete, for example, $\Gamma$ is a sphere with $D$ inside and $u(x,t)$ is available for $x \in \Gamma$ and $t \in (0, T]$, the algorithm can produce a good reconstruction of the trajectory of the moving point source $z(t)$. However, in practice, the measured data $u(x,t)$ are usually partial and the reconstructions can be unsatisfactory. Nonetheless, the ADSM provides useful information of the moving source. Such information can be coded in the priors for the Bayesian inversion to obtain improved reconstructions.

\section{Bayesian Inversion}\label{sec:bayesian_inversion}

In this section, we employ the Bayesian inversion to refine the results obtained by the ADSM. The {TISP} is reformulated as a statistical inference problem for the source location. The approximate location obtained by the ADSM is coded in the priors for Bayesian inversion.  The well-posedness of the posterior density function is analyzed using the Bayesian approximation error approach \cite{JariE.Somersalo2006} and the framework in \cite{Stuart2010}. An MCMC algorithm is then used to explore the posterior density function of the source location.

\subsection{Bayesian Framework for Inverse Problems}
The inverse problem can be viewed as seeking information about the unknown $q$ given the measurement $m$ under the model function $\mathcal{F}$, which might be inaccurate and contain noise $\xi$, i.e., given $m$, reconstruct $q$ such that
\begin{equation} \label{statistical model}
   m=\mathcal{F}(q,\xi).
\end{equation}

For statistical inverse problems, the parameters in \eqref{statistical model} are treated as random variables. Denote the probability density function and the probability measure of a random variable $\gamma$ by $\pi_{\gamma}$ and $\mu_{\gamma}$, respectively. Note that the information about $q$ obtained by the {ADSM} will be coded into the prior density $\pi(q)\footnote{If the arguments coincide with the probability density function, we drop the subscripts. For example, we write $\pi_{q}(q)=\pi(q)$, $\pi_{m|q}(m|q)=\pi(m|q)$, but retain the subscript in $\pi_{e}(m-\mathcal{F}(q)-\zeta)$.}$. The probability of $m$ given $q$, which is called the likelihood function, is denoted by $\pi(m|q)$.

Bayesian inversion seeks the posterior distribution $\pi(q|m)$. Using the Bayes' theorem, the joint distribution of the associated variables can be decomposed as
\begin{equation*}
 \pi(q,m,\xi)=\pi(m,\xi|q)\pi(q) = \pi(q,\xi|m) \pi(m)
\end{equation*}
and the posterior distribution of $q$ is then given by
\begin{equation*}
    \pi(q|m)= \int \pi(q,\xi|m) \mathrm{d} \xi.
\end{equation*}
Moreover, $\pi(q|m) \propto \pi(m|q)\pi(q)$, where $\propto$ means ``proportional to''. 
 % There are various methods, e.g., Markov chain Monte Carlo (MCMC), to explore the posterior probability density distribution $\pi(q|m)$. 
Point estimates of $\pi(q|m)$ are often used as the solution of the Bayesian method. Two popular statistical point estimates are the maximum a posterior estimate (MAP) 
\[
q_{\text{MAP}}=\arg \max\limits_q \, \pi(q|m)
\]
and the conditional mean (CM) 
\[
q_{\text{CM}}=\mathbb{E}(\pi(q|m)).
\]

In \eqref{statistical model}, $m=\{u(x,t)\} \in Y=\mathbb{R}^{N_{x}\times N_{p}}$ is the noisy measurement data collected on $\Gamma \times T_{j}$, $j=1,2,\cdots,J$, where $N_{x}$ and $N_{p}$ are the numbers of observation positions and times. The unknown parameter $q\in X= \mathbb{R}^3$, i.e., the position of the point source on the time period $T_{j}$, is assumed to be static due to \eqref{vtllc}. The space $Y$ is equipped with the Frobenius norm $\|A\|_{Y}=\left( \sum_{i=1}^{N_{p}} \sum_{j=1}^{N_{x}} |a_{ij}|^2 \right)^{\frac{1}{2}}$ for $ A\in Y$.  In this paper, the Markov chain Monte Carlo (MCMC) method is employed to explore the posterior distribution $\pi(q|m)$ using the Bayesian approximation error approach and the CM is adopted as the point estimate for $q$.

\subsection{Bayesian Approximation Error Approach}
The main idea of the Bayesian approximation error (BAE) approach is to replace the accurate forward model by a less accurate but computationally feasible one \cite{JariE.Somersalo2006, Kolehmainen2011, Kaipio2019}. Let $\bar{q}=z(t)$ and $\bar{\mathcal{F}}$ be the accurate forward model given by \eqref{exact solution 1}. We consider the following statistical model for the TISP
\begin{equation}\label{statistical model 1}
    m=\bar{\mathcal{F}}(\bar{q})+e,
\end{equation}
where $e$ is the additive error. Furthermore, we assume the noise $e$ follows a normal distribution with mean $e_{*}$ and variance $\Sigma_{ee}$, i.e., $\pi_{e}(e)=\mathcal{N}(e_{*},\Sigma_{ee})$. Here we take $e_{*}=0$.

Using the statistical model \eqref{statistical model 1} would require the knowledge of the exact path of the point source for $t\in(0,T]$, which is inaccessible. Fortunately, the exact path is not necessary for the BAE. One proceeds as follows. Let $P$ be a projection operator and $q=P\bar{q}$. Instead of the accurate forward model $\bar{F}(\bar{q})$, one uses the following approximate forward operator
\begin{equation}\label{ApproxF}
   \mathcal{F}(q) = \frac{\lambda(t-c^{-1}\|x-q\|)}{4\pi\| x-q\|},
\end{equation}
which is computationally cheaper and more  approachable. Then the statistical model can be written as
\begin{equation*}
m=\bar{\mathcal{F}}(\bar{q})+e = \mathcal{F}(q)+\zeta+e, 
\end{equation*}
where $\zeta=\bar{\mathcal{F}}(\bar{q})-\mathcal{F}(q)$ is the approximation error. Due to the fact that $\|v(t)\| \ll c$ and \eqref{Duzuz0}, it is sufficient to study the well-posedness for the approximate forward operator $\mathcal{F}(q)$.

Assume that $e$ is separately independent of $q$ and $\zeta$. According to \cite{Kolehmainen2011, Kaipio2019}, the posterior distribution satisfies
\begin{equation*}
    \pi(q|m)~\propto~ \pi(m|q)\pi(q)={\int_{Y} \pi_{e}(m-\mathcal{F}(q)-\zeta)\pi(\zeta|q)~\mathrm{d}\zeta}~\pi(q).
\end{equation*}
In the approximation error approach, $\pi(\zeta|q) $ is approximated with a normal distribution. Assume that the normal approximation of the joint distribution $\pi(\zeta,q)$ is given by
\begin{equation*}
    \pi(\zeta,q)~\propto~ \exp \left\{ -\frac{1}{2}
    \begin{pmatrix}
         \zeta-\zeta_{*} \\
          q-q_{*}
    \end{pmatrix}^\top 
    \begin{pmatrix}
        \Sigma_{\zeta\zeta} & \Sigma_{\zeta q} \\
        \Sigma_{q\zeta}  & \Sigma_{qq}
    \end{pmatrix}^{-1}
    \begin{pmatrix}
         \zeta-\zeta_{*} \\
          q-q_{*}
    \end{pmatrix}   
    \right\},
\end{equation*}
Then we can write $\zeta|q \sim \mathcal{N}(\zeta_{*|q},\Sigma_{\zeta|q})$, where
\begin{equation*}
    \begin{split}
        \zeta_{*|q}&= \zeta_{*} +\Sigma_{\zeta q}\Sigma^{-1}_{q q} (q-q_{*}),\\
        \Sigma_{\zeta|q}&= \Sigma_{\zeta\zeta} -\Sigma_{\zeta q}\Sigma^{-1}_{q q}\Sigma_{q \zeta } .\\
    \end{split}
\end{equation*}
Define the normal random variable $w$ so that $w=e+\zeta|q$. It holds that
\begin{equation*}
    w|q \sim \mathcal{N}(w_{*|q},\Sigma_{w|q}),
\end{equation*}
where
\begin{equation*}
    \begin{split}
        w_{*|q}&= e_{*}+\zeta_{*} +\Sigma_{\zeta q}\Sigma^{-1}_{q q} (q-q_{*}),\\
        \Sigma_{w|q}&=\Sigma_{ee}+ \Sigma_{\zeta\zeta} -\Sigma_{\zeta q}\Sigma^{-1}_{q q}\Sigma_{q \zeta } .\\
    \end{split}
\end{equation*}

The approximate posterior distribution can be written as
 \begin{equation*}
     \pi(q|m)~\propto~ \exp \left(-\frac{1}{2} \|( m-\mathcal{F}(q)- w_{*|q})^{\top} \Sigma_{w|q}^{-1}( m-\mathcal{F}(q)- w_{*|q})\|^2_{Y} \right) \pi(q) ,
 \end{equation*}
Assume that $\mu_{m}$ is absolutely continuous with respect to $\mu_{q}$, i.e., $\mu_{m}\ll \mu_{q}$. Using Bayes' formula, we have that
\begin{equation*}
    \frac{\mathrm{d} \mu_{m}}{\mathrm{d}\mu_{q}}(q)=\frac{1}{L(m)}\exp (-G(q;m)),
\end{equation*}
where $L(m):=\displaystyle{ \int_{X} \exp \left(-\frac{1}{2} \|( m-\mathcal{F}(q)- w_{*|q})^{\top} \Sigma_{w|q}^{-1}( m-\mathcal{F}(q)- w_{*|q})\|^2_{Y} \right)  } \mathrm{d} \mu_{q}(q)$.
In this paper, the prior is chosen to be a normal distribution, i.e., $q \sim \mathcal{N}(q_{*},\Sigma_{qq})$. We note that in practice other prior distributions can be used as well. 

%\textcolor{red}{Neglecting the correlation between the unknown and the modelled additive error and setting $\Gamma_{\zeta q}=0$}, one has the enhanced error model \cite{JariE.Somersalo2006}. 

We now study the properties of the operator $\mathcal{F}$ following \cite{Stuart2010}.
\begin{lemma} \label{Lemma1}
For every $\varepsilon>0$, there exists $C:=C(\varepsilon)\in \mathbb{R}$ such that, for all $q\in X$,
\begin{equation*}\label{F_property 1}
\|\mathcal{F}(q)\|_{Y}\leqslant \exp (\varepsilon \|q\|^2_X+C).
\end{equation*}
\end{lemma}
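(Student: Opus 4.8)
The plan is to observe that $\mathcal{F}(q)$ is a finite array of scalar evaluations, so $\|\mathcal{F}(q)\|_Y$ is controlled by just two ingredients: the boundedness of the temporal pulse $\lambda$, and a uniform lower bound on the source-to-sensor distances $\|x_l-q\|$. Together these already produce a bound on $\|\mathcal{F}(q)\|_Y$ that is \emph{independent} of $q$, which is far stronger than the claimed estimate $\exp(\varepsilon\|q\|_X^2+C)$; the stated exponential form is then immediate for every $\varepsilon>0$.

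First I would record that, since $\lambda\in\mathcal{C}^1[0,T]$ is periodic with period $p$, it extends to a bounded continuous function on $\mathbb{R}$, so $M_\lambda:=\sup_{s\in\mathbb{R}}|\lambda(s)|<\infty$. Writing the entries of the array $\mathcal{F}(q)$ over the sensors $x_l\in\Gamma$, $l=1,\dots,N_x$, and the sampling times, $n=1,\dots,N_p$, the Frobenius norm on $Y$ gives
\begin{equation*}
\|\mathcal{F}(q)\|_Y^2 = \sum_{n=1}^{N_p}\sum_{l=1}^{N_x}\left|\frac{\lambda(t-c^{-1}\|x_l-q\|)}{4\pi\|x_l-q\|}\right|^2 \leq \frac{M_\lambda^2}{16\pi^2}\sum_{n=1}^{N_p}\sum_{l=1}^{N_x}\frac{1}{\|x_l-q\|^2}.
\end{equation*}
The remaining task is to bound $\|x_l-q\|$ from below uniformly. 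Since the source parameter $q$ ranges over the closed source region $\bar{D}$ while the sensors form a finite set $\{x_l\}_{l=1}^{N_x}\subset\mathbb{R}^3\setminus\bar{D}$, we have $\delta:=\min_{l}\operatorname{dist}(x_l,\bar{D})>0$ and hence $\|x_l-q\|\geq\delta$ for all $l$ and all admissible $q$. This yields the uniform bound $\|\mathcal{F}(q)\|_Y\leq C_0:=\sqrt{N_xN_p}\,M_\lambda/(4\pi\delta)$, and the lemma follows by taking $C=\log C_0$, since $C_0\leq\exp(\varepsilon\|q\|_X^2+\log C_0)$ for every $\varepsilon\geq 0$.

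The only genuine obstacle is this uniform lower bound on the denominators $\|x_l-q\|$. If $q$ were literally allowed to range over all of $X=\mathbb{R}^3$ as the statement suggests, then $q$ could coincide with a sensor $x_l$, the corresponding entry of $\mathcal{F}(q)$ would blow up, and no bound of the form $\exp(\varepsilon\|q\|_X^2+C)$, whose right-hand side stays finite near $x_l$, could hold. Hence the crux is to read the domain of $\mathcal{F}$ with the geometric separation of the problem in mind: $q$ is a source location confined to $\bar{D}$, kept away from the measurement surface by the standing assumption $\Gamma\subset\mathbb{R}^3\setminus\bar{D}$. I expect the careful statement and use of this separation, rather than any delicate analytic estimate, to be the essential step; the exponential-in-$\|q\|^2$ form is retained only to dovetail with the general well-posedness framework of \cite{Stuart2010}, where a constant bound such as $C_0$ is more than enough.
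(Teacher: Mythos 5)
Your proof is correct and follows essentially the same route as the paper's: bound $|\lambda|$ by its supremum (using $\lambda\in\mathcal{C}^1$), bound the denominator below by the positive separation between the sensors and the sampling region (the paper uses $d=\inf\{\|y_1-y_2\|:y_1\in\Gamma,\,y_2\in D\}$, you use $\delta=\min_l\operatorname{dist}(x_l,\bar{D})$), and conclude that $\|\mathcal{F}(q)\|_Y$ admits a $q$-independent constant bound, from which the exponential estimate is immediate. Your remark that the statement's ``for all $q\in X=\mathbb{R}^3$'' must in practice be read as $q$ confined to $\bar{D}$ is a fair observation of a gap the paper itself silently passes over, since its own bound $\|x-q\|\geq d$ likewise presupposes $q\in D$.
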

\begin{proof}
Since $\lambda(t)\in \mathcal{C}^{1}[0,T]$, we have $|\lambda(t)|\leqslant C_{1}$. Define $d= \inf\{\|y_1-y_2\|: y_1\in \Gamma, y_2\in D\}$. By \eqref{ApproxF}, it holds that
 \begin{equation*}
 \begin{split}
    |\mathcal{F}(q)| &  \leqslant \frac{|\lambda(t-c^{-1}\| x-q\|)|}{4\pi d}\leqslant \frac{C_1}{4\pi d} . 
 \end{split}
 \end{equation*}
 Therefore,
 \begin{equation*}
   \|\mathcal{F}(q)\|_{Y}\leqslant \exp (\varepsilon \|q\|^2_X+C)
 \end{equation*}
 and the proof is complete.
\end{proof}
\begin{lemma}
For every $r>0$ there exists a $C:=C(r)>0$ such that, for all $q_{1},q_{2}\in X$ with $\max\{\|q_{1}\|_{X},\|q_{2}\|_{X}\}<r$,
\begin{equation*}\label{F_property 2}
\|\mathcal{F}(q_1)-\mathcal{F}(q_2)\|_{Y}\leqslant
C \|q_1-q_2\|_X.
\end{equation*}
\end{lemma}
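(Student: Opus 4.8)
The plan is to establish a uniform pointwise bound on the gradient of $\mathcal{F}$ with respect to $q$ over the relevant bounded region, and then aggregate the resulting entrywise Lipschitz estimates across the measurement grid to recover the bound in the $Y$-norm. Recall that $\mathcal{F}(q)\in Y=\mathbb{R}^{N_x\times N_p}$ has entries $\frac{\lambda(t-c^{-1}\|x-q\|)}{4\pi\|x-q\|}$ evaluated at the observation points $x\in\Gamma$ and the sampling times $t$, so it suffices to control each such entry as a function of $q$. The crucial geometric input is that the denominator stays bounded away from zero: since $q_1,q_2$ lie in a bounded set separated from $\Gamma$ (for $q\in D$ this is guaranteed by $\Gamma\subset\mathbb{R}^3\setminus\overline{D}$, giving the quantity $d$ of Lemma \ref{Lemma1}, and for the ball $\{q:\|q\|_X<r\}$ one obtains an analogous constant $d_r>0$), convexity of that region ensures $\|x-q\|\geq d_r$ for every $x\in\Gamma$ and every $q$ on the segment joining $q_1$ and $q_2$.

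First I would differentiate a single entry $g(q):=\frac{\lambda(t-c^{-1}R)}{4\pi R}$ with $R=\|x-q\|$. Using $\nabla_q R=(q-x)/R$, which has unit norm, a direct computation gives
\begin{equation*}
\nabla_q g(q) = -\frac{1}{4\pi}\left( \frac{\lambda'(t-c^{-1}R)}{cR} + \frac{\lambda(t-c^{-1}R)}{R^2} \right)\frac{q-x}{R},
\end{equation*}
whence
\begin{equation*}
\|\nabla_q g(q)\| \leqslant \frac{1}{4\pi}\left( \frac{|\lambda'(t-c^{-1}R)|}{cR} + \frac{|\lambda(t-c^{-1}R)|}{R^2} \right).
\end{equation*}
Because $\lambda\in\mathcal{C}^1[0,T]$ is periodic, both $\lambda$ and $\lambda'$ are bounded, say $|\lambda|\leqslant C_1$ and $|\lambda'|\leqslant C_1'$; combined with $R\geqslant d_r$ this yields the uniform bound $\|\nabla_q g(q)\|\leqslant L$ with $L=L(r):=\frac{1}{4\pi}\left(\frac{C_1'}{c\,d_r}+\frac{C_1}{d_r^2}\right)$, valid for all $x\in\Gamma$, all sampling times $t$, and all $q$ in the ball.

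Next I would apply the mean value theorem along the segment $[q_1,q_2]$ to each entry, which stays in the convex ball and hence away from the singularity, obtaining $|g(q_1)-g(q_2)|\leqslant L\,\|q_1-q_2\|_X$ uniformly over the grid. Summing the squares of these entrywise differences over the $N_x\times N_p$ observation positions and times gives
\begin{equation*}
\|\mathcal{F}(q_1)-\mathcal{F}(q_2)\|_Y^2 \leqslant N_x N_p\, L^2\, \|q_1-q_2\|_X^2,
\end{equation*}
and therefore the claim with $C(r)=\sqrt{N_x N_p}\,L(r)$.

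The main obstacle is not the differentiation, which is routine, but controlling the denominator uniformly: everything hinges on the lower bound $\|x-q\|\geqslant d_r>0$, which relies on the separation of the measurement surface $\Gamma$ from the bounded set containing $q_1$, $q_2$, and---crucially---the entire segment between them. Were the singularity $x=q$ reachable, the $1/R^2$ term would destroy the Lipschitz estimate; it is precisely the geometry $\Gamma\subset\mathbb{R}^3\setminus\overline{D}$ together with the restriction to a bounded region that rules this out. A secondary point worth verifying is the global boundedness of $\lambda'$, which follows from $\lambda$ being $\mathcal{C}^1$ and periodic.
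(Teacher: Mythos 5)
Your proof is correct, but it proceeds along a genuinely different route from the paper's. The paper argues purely algebraically: it puts the two terms over the common denominator $4\pi\|x-q_1\|\,\|x-q_2\|$, adds and subtracts the cross term $\lambda(t-c^{-1}\|x-q_1\|)\,\|x-q_1\|$, and then bounds one piece by $\sup|\lambda|$ together with the reverse triangle inequality $\bigl|\,\|x-q_2\|-\|x-q_1\|\,\bigr|\leqslant\|q_1-q_2\|_X$, and the other piece by the Lipschitz constant of $\lambda$ (available since $\lambda\in\mathcal{C}^1$ on a compact interval) multiplied by the diameter bound $\tilde d=\sup\{\|y_1-y_2\|:y_1\in\Gamma,\,y_2\in D\}$; this yields the constant $\frac{1}{4\pi d^2}\bigl(C_1+\frac{1}{c}C_2\tilde d\bigr)$ without ever differentiating in $q$. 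Your gradient-plus-mean-value-theorem argument reaches the same conclusion with a comparable constant, and it is in two respects more explicit than the paper: you track the aggregation factor $\sqrt{N_xN_p}$ coming from the Frobenius norm on $Y$ (which the paper passes over silently in the final step), and you state openly the geometric hypothesis that both proofs actually need, namely a positive lower bound on $\|x-q\|$ for the admissible $q$ --- the paper simply reuses the constant $d=\inf\{\|y_1-y_2\|:y_1\in\Gamma,\,y_2\in D\}$ from the previous lemma even though the statement allows any $q\in X$ with $\|q\|_X<r$, so it carries the same implicit restriction. The one thing the paper's route buys that yours does not: the algebraic estimate only requires the separation bound at the two points $q_1,q_2$ themselves, whereas the mean value theorem needs it along the whole segment $[q_1,q_2]$, forcing you to invoke convexity of the ball; your write-up correctly identifies and handles this, so it is a difference in hypotheses used, not a gap.
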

\begin{proof}
Define $\tilde{d}= \sup\{\|y_1-y_2\|: y_1\in \Gamma, y_2\in D\} $. Since $\lambda(t)\in \mathcal{C}^1 [0,T]$, we have \[
|\lambda(t_{1})-\lambda(t_{2})|\leqslant C_{2}|t_1-t_2|, ~\forall t_{1},t_{2} \in (0,T),
\] for some constant $C_2$. Consequently,
\begin{equation*}
    \begin{split}
    |\mathcal{F}(q_1)-\mathcal{F}(q_2)| = & \left|\frac{\lambda(t-c^{-1}\|x- q_1\|)}{4\pi \| x-q_1\|} - \frac{\lambda(t-c^{-1}\|x-q_2\|)}{4\pi \|x-q_2\|}  \right| \\
    = & \left|\frac{\lambda(t-c^{-1}\|x-q_1\|)\|x-q_2\|-\lambda(t-c^{-1}\|x-q_2\|)\| x-q_{1}\|}{4\pi \| x-q_1\|\|x-q_2\|}\right| \\
    \leq & \left |\frac{\lambda(t-c^{-1}\| x-q_1\|)\|x- q_2\|-\lambda(t-c^{-1}\| x-q_1\|)\| x-q_1\|}{4\pi d^2} \right| \\
    & + \left|\frac{\lambda(t-c^{-1}\| x-q_{1}\|)\| x-q_{1}
    \|-\lambda(t-c^{-1}\| x-q_{2}\|)\| x-q_1\|}{4\pi d^2} \right| \\
    \leq & C_{1}\left|\frac{\|x- q_2\|-\| x-q_1\|}{4\pi d^2} \right|+  C_{2} \left|\frac{\|x- q_{2}\|-\| x-q_{1}\|}{4\pi d^2}  \right|\frac{1}{c}\tilde{d} \\ 
    \leqslant & \frac{\|q_1-q_2\|_{X}}{4\pi d^2} \left( C_{1}+ \frac{1}{c}C_{2}\tilde{d} \right).  \\
    \end{split}
\end{equation*}
It yields that
\begin{equation*}
\|\mathcal{F}(q_1)-\mathcal{F}(q_2)\|_{Y}\leqslant
C \|q_1-q_2\|_X.
\end{equation*}
\end{proof}

\begin{definition}
The Hellinger distance between two probability measures $\mu_1$ and $\mu_2$ with common reference measure $\nu$ is defined as
\begin{equation*}
    d_{\rm Hell}(\mu_1, \mu_2)=\left( \int \left(\sqrt{\mathrm{d} \mu_{1} / \mathrm{d} \nu}-\sqrt{\mathrm{d} \mu_{2} / \mathrm{d} \nu }\right)^2 ~ \mathrm{d} \nu \right)^{1/2}.
\end{equation*}
\end{definition}

\begin{theorem}\label{Theorem4}[Page 530 of \cite{Stuart2010} \textbf{The Fernique theorem}]
If $\mu$ is a Gaussian measure with mean $0$ on a Banach space $X$ so that $\mu(X)=1$, then there exists $\alpha>0$ such that 
\begin{equation*}
    \int_{X} \exp\left( \alpha \|x\|^2_{X}\right) \mu({ \mathrm{d} x}) < \infty.
\end{equation*}
\end{theorem}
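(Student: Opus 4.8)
The plan is to reduce the claim to a Gaussian tail bound on the norm, namely $\mu(\{x : \|x\|_X > t\}) \le C e^{-ct^2}$ for some $c>0$, and then to recover the exponential moment by integrating the tail. Writing $f(u) := \mu(\{x : \|x\|_X > u\})$ and using the layer-cake identity $e^{\alpha\|x\|_X^2} = 1 + \int_0^{\|x\|_X} 2\alpha t\, e^{\alpha t^2}\,\mathrm{d}t$ together with Fubini, one obtains for any $\alpha < c$
\begin{equation*}
\int_X \exp(\alpha \|x\|_X^2)\, \mu(\mathrm{d}x) = 1 + \int_0^\infty 2\alpha t\, e^{\alpha t^2}\, f(t)\, \mathrm{d}t \le 1 + C\int_0^\infty 2\alpha t\, e^{(\alpha - c)t^2}\, \mathrm{d}t < \infty.
\end{equation*}
The whole difficulty is therefore in producing the tail bound, and the central tool is the classical symmetrization argument exploiting the rotational invariance of centered Gaussian measures.

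First I would take $x_1,x_2$ independent, each distributed according to $\mu$, and invoke the defining property that the orthogonal map $(x_1,x_2)\mapsto ((x_1-x_2)/\sqrt2,\ (x_1+x_2)/\sqrt2)$ leaves the joint law on $X\times X$ invariant. Combining this with independence and the reverse triangle inequality $2\|x_i\|_X \ge \|x_1+x_2\|_X - \|x_1-x_2\|_X$, one shows that for $t>s>0$ the event $\{\|(x_1-x_2)/\sqrt2\|_X \le s,\ \|(x_1+x_2)/\sqrt2\|_X > t\}$ is contained in $\{\|x_1\|_X > (t-s)/\sqrt2\} \cap \{\|x_2\|_X > (t-s)/\sqrt2\}$. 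Setting $\beta := \mu(\{\|x\|_X \le s\})$ and unwinding the probabilities yields the key recursive inequality
\begin{equation*}
\beta\, f(t) \le f\!\left(\tfrac{t-s}{\sqrt2}\right)^2.
\end{equation*}

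Next I would iterate along the sequence $t_0 = s$, $t_{n+1} = s + \sqrt2\, t_n$, so that $(t_{n+1}-s)/\sqrt2 = t_n$. Fixing $s$ large enough that $q := f(s)/\beta < 1$ (possible since $f(s)\to 0$ and $\beta\to 1$ as $s\to\infty$, because $\mu(X)=1$ and the norm is a.s. finite), the recursion $f(t_{n+1})/\beta \le (f(t_n)/\beta)^2$ gives the doubly exponential decay $f(t_n) \le \beta\, q^{2^n}$. Since $t_n$ grows geometrically, $t_n \sim C(\sqrt2)^n$, one has $2^n \ge c'\, t_n^2$ for large $n$; as $\log q<0$ this produces $f(t_n) \le \beta\, e^{-c\, t_n^2}$ with $c>0$, and monotonicity of $f$ upgrades it to $f(t)\le C e^{-ct^2}$ for all $t$, which is exactly the tail bound needed above.

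The main obstacle is the very first step: justifying the rotational invariance of $\mu$ in the Banach-space setting. This rests on the definition of a Gaussian measure — that the pushforward of $\mu$ under every continuous linear functional on $X$ is a real centered Gaussian — together with the fact that the orthogonal $\sqrt2$-rotation preserves the covariance functional, so that the two combinations are again independent and each distributed as $\mu$; one must also check measurability and a.s. finiteness of $x\mapsto\|x\|_X$ so that $f$ and $\beta$ are well defined with $\beta\to 1$. I note that in the application of this paper the measure lives on $X=\mathbb{R}^3$, where the theorem collapses to the elementary observation that $\int_{\mathbb{R}^3} \exp(\alpha\|x\|^2 - \tfrac12 x^\top \Sigma_{qq}^{-1} x)\,\mathrm{d}x$ converges precisely when $\alpha < \tfrac12 \lambda_{\min}(\Sigma_{qq}^{-1})$, so in practice no symmetrization is required and $\alpha$ can be read off from the prior covariance directly.
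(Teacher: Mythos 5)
Your proof is correct, but it cannot be compared against a proof in the paper for a simple reason: the paper does not prove this statement at all. Theorem~\ref{Theorem4} is quoted verbatim, with a page citation, from the survey \cite{Stuart2010}, and is used purely as a black box in the well-posedness theorem that follows (to control $\int_X \|\mathcal{F}(q)\|_Y\,\mathrm{d}\mu_q$ type terms via Lemma~\ref{Lemma1}). What you have reconstructed is the classical Fernique argument: the reduction to a Gaussian tail bound by the layer-cake/Fubini identity, the symmetrization inequality $\beta f(t) \leq f\left((t-s)/\sqrt{2}\right)^2$ obtained from rotation invariance of $\mu\otimes\mu$ under $(x_1,x_2)\mapsto\bigl((x_1-x_2)/\sqrt{2},(x_1+x_2)/\sqrt{2}\bigr)$, and the iteration along $t_{n+1}=s+\sqrt{2}\,t_n$ with $s$ chosen so that $f(s)<1/2$, hence $q=f(s)/\beta<1$, giving doubly exponential decay and thus $f(t)\leq C e^{-ct^2}$. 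All of these steps check out, and you correctly identify the two points where care is needed: the mean-zero hypothesis (essential for rotation invariance) and, in infinite dimensions, measurability and a.s.\ finiteness of the norm (the separable/Radon setting), which is indeed where the real content lies. Your closing remark is also the practically relevant one for this paper: here $X=\mathbb{R}^3$ and the prior is Gaussian, so the theorem collapses to the convergence of an explicit Gaussian integral for $\alpha < \tfrac12\lambda_{\min}(\Sigma_{qq}^{-1})$; the only wrinkle is that the paper's prior $\mathcal{N}(q_*,\Sigma_{qq})$ has nonzero mean, so to apply the statement as written one should either center the measure first or absorb the mean via $\|x\|^2 \leq 2\|x-q_*\|^2+2\|q_*\|^2$, at the cost of shrinking $\alpha$ and enlarging the constant.
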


The following theorem provides the well-posedness of the Bayesian approximation error approach.
\begin{theorem}
Assume that $\mu_{q}$ is a Gaussian measure satisfying $\mu_q(X)=1$ and $\mu_{m} \ll \mu_{q}$. For $m_1$ and $m_2$ with $\max\{\|m_1\|_{Y}, \|m_2\|_{Y}\}\leq r$, there exists $M=M(r)>0$ such that
\begin{equation*}
   d_{\rm Hell}(\mu_{m_1}, \mu_{m_2}) \leq M \|m_1-m_2\|_{Y}.
\end{equation*}
\end{theorem}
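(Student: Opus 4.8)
The plan is to follow the standard well-posedness argument of \cite{Stuart2010}, reducing the Hellinger bound to three ingredients: nonnegativity of the negative log-density $G(q;m)$, a local data-Lipschitz estimate for $G$ in $m$, and a uniform positive lower bound on the normalizing constant $L(m)$. Writing $r(q;m):=m-\mathcal{F}(q)-w_{*|q}$ and $G_i:=G(q;m_i)$, the potential $G(q;m)=\tfrac12\|r(q;m)^\top\Sigma_{w|q}^{-1}r(q;m)\|_Y^2$ is manifestly nonnegative, so that $L(m)=\int_X e^{-G(q;m)}\,\mathrm{d}\mu_q(q)\le\mu_q(X)=1$.

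First I would establish the data-Lipschitz estimate for $G$. Since $G_1-G_2$ is a difference of quadratic expressions in the residual, factoring $a^2-b^2=(a-b)(a+b)$ and using that $\mathcal{F}$ is uniformly bounded (from the proof of Lemma \ref{Lemma1}, $|\mathcal{F}(q)|\le C_1/(4\pi d)$) while $w_{*|q}$ is affine in $q$, I would obtain
\[
|G_1-G_2|\le C(r)\,\big(1+\|q\|_X^2\big)\,\|m_1-m_2\|_Y
\]
whenever $\max\{\|m_1\|_Y,\|m_2\|_Y\}\le r$. Because $s\mapsto e^{-s/2}$ is $\tfrac12$-Lipschitz and bounded by $1$ on $s\ge 0$, this yields the pointwise bound $|e^{-\frac12 G_1}-e^{-\frac12 G_2}|\le\tfrac12 C(r)\big(1+\|q\|_X^2\big)\|m_1-m_2\|_Y$.

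Next I would control the normalizing constants. For $\|q\|_X\le R$ and $\|m\|_Y\le r$ the residual, and hence $G$, is bounded above by some $G_*(R,r)$, so $L(m)\ge e^{-G_*(R,r)}\,\mu_q(\{\|q\|_X\le R\})=:L_->0$ uniformly in $\|m\|_Y\le r$. Writing $\sqrt{\mathrm{d}\mu_{m_i}/\mathrm{d}\mu_q}=L(m_i)^{-1/2}e^{-\frac12 G_i}$ and adding and subtracting $L(m_1)^{-1/2}e^{-\frac12 G_2}$, the Hellinger distance splits as $d_{\rm Hell}(\mu_{m_1},\mu_{m_2})^2\le I_1+I_2$ with
\[
I_1=2L(m_1)^{-1}\!\int_X\!\big(e^{-\frac12 G_1}-e^{-\frac12 G_2}\big)^2\mathrm{d}\mu_q,\qquad I_2=2\big|L(m_1)^{-1/2}-L(m_2)^{-1/2}\big|^2\!\int_X e^{-G_2}\,\mathrm{d}\mu_q.
\]
For $I_1$ I would insert the pointwise bound and use $L(m_1)^{-1}\le L_-^{-1}$; the remaining integral $\int_X\big(1+\|q\|_X^2\big)^2\,\mathrm{d}\mu_q$ is finite because $\mu_q$ is Gaussian and the Fernique theorem (Theorem \ref{Theorem4}) controls all polynomial moments, giving $I_1\le C\|m_1-m_2\|_Y^2$. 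For $I_2$, the mean value theorem gives $|L(m_1)^{-1/2}-L(m_2)^{-1/2}|\le\tfrac12 L_-^{-3/2}|L(m_1)-L(m_2)|$, and $|L(m_1)-L(m_2)|\le\int_X|e^{-G_1}-e^{-G_2}|\,\mathrm{d}\mu_q\le C\|m_1-m_2\|_Y$ by the same pointwise estimate, while $\int_X e^{-G_2}\,\mathrm{d}\mu_q=L(m_2)\le 1$; hence $I_2\le C\|m_1-m_2\|_Y^2$. Combining the two gives $d_{\rm Hell}(\mu_{m_1},\mu_{m_2})\le M\|m_1-m_2\|_Y$.

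The step I expect to be the main obstacle is the uniform integrability controlling $I_1$: I must ensure the $q$-dependent Lipschitz coefficient grows slowly enough (sub-Gaussian) to be square-integrable against the Gaussian prior. Here the boundedness of $\mathcal{F}$ keeps that coefficient merely polynomial in $\|q\|_X$, so the finiteness of Gaussian moments (a consequence of Fernique) suffices; had $\mathcal{F}$ only obeyed the exponential bound of Lemma \ref{Lemma1}, one would instead invoke the Fernique theorem with a sufficiently small exponent $\varepsilon$ to guarantee $\int_X e^{2\varepsilon\|q\|_X^2}\,\mathrm{d}\mu_q<\infty$. Securing the uniform-in-$m$ lower bound $L_-$ and matching the growth of the potential to the Gaussian decay is the delicate point; the remaining estimates are routine.
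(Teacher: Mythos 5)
Your proposal is correct and follows essentially the same route as the paper's proof: the same splitting of the Hellinger integrand by adding and subtracting $L(m_1)^{-1/2}e^{-\frac{1}{2}G(q;m_2)}$, the same three ingredients ($0<L(m)\leq 1$ with a uniform lower bound on bounded sets of data, a data-Lipschitz estimate for $G$ and hence for $L$, and the Fernique theorem to integrate the $q$-dependent Lipschitz coefficient against the Gaussian prior), and the same mean-value treatment of $|L(m_1)^{-1/2}-L(m_2)^{-1/2}|$. If anything, you are slightly more explicit than the paper about why the polynomial growth in $\|q\|_X$ of the Lipschitz coefficient (coming from the affine term $w_{*|q}$) is square-integrable under $\mu_q$, a point the paper absorbs silently into the constant $M$ in its estimate \eqref{Hproof3}.
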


\begin{proof}
Given 
\begin{equation*}
    L(m)=\int_{X} \exp \left(-\frac{1}{2} \|( m-\mathcal{F}(q)- w_{*|q})^{\top} \Sigma_{w|q}^{-1}( m-\mathcal{F}(q)- w_{*|q})\|^2_{Y} \right)  \mathrm{d} \mu_{q}(q),
\end{equation*}
it is obvious that 
\begin{equation}\label{Hproof1}
    0\leq L(m)\leq 1.
\end{equation}
According to Lemma~\ref{Lemma1}, we have that 
\begin{equation}\label{Hproof2}
\begin{split}
   L(m) & \geq \int_{\|q\|_{X}\leq 1} \exp \left(- \left|\Sigma_{w|q}^{-1}\right|~\| m- w_{*|q}\|^2_{Y}-\left|\Sigma_{w|q}^{-1}\right|~\| \mathcal{F}(q)\|^2_{Y} \right)  \mathrm{d} \mu_{q}(q) \\
   & \geq \int_{\|q\|_{X}\leq 1} \exp (-M) \mathrm{d} \mu_{q}(q) \\
   & = \exp(-M)\mu_{q}\{\|q\|_{X}\leq 1 \} \\
   & > 0,
\end{split}
\end{equation}
since $\mu_{q}$ is a Gaussian measure.

Using the Fernique Theorem (Theorem~\ref{Theorem4}) and Lemma~\ref{Lemma1}, for $\mu_{q}$, it holds that
\begin{equation} \label{Hproof3}
   \begin{split}
   | L(m_1)-L(m_2)| & \leq \int_{X} \left|\exp (-G(q;m_{1}))- \exp (-G(q;m_{2})) \right|  \mathrm{d} \mu_{q}(q). \\
      & \leq \int_{X} \left| G(q;m_{1})-G(q;m_{2}) \right|  \mathrm{d}\mu_{q}(q) \\
      & \leq \int_{X}  \frac{\left|\Sigma_{w|q}^{-1}\right|}{2}\left| \| m_1-\mathcal{F}(q)- w_{*|q}\|_{Y}^2- \| m_2-\mathcal{F}(q)- w_{*|q}\|_{Y}^2 \right|  \mathrm{d} \mu_{q}(q)\\
      & \leq  \int_{X}  \frac{\left|\Sigma_{w|q}^{-1}\right|}{2}\left(\left| \| m_1\|_{Y}^2-\| m_2\|_{Y}^2\right| +  \|\mathcal{F}(q)+ w_{*|q}\|_{Y} ~ \| m_1- m_2\|_{Y}  \right) \mathrm{d} \mu_{q}(q)\\
      & \leq  \int_{X}  \frac{\left|\Sigma_{w|q}^{-1}\right|}{2}\left( \| m_1\|_{Y}+\| m_2\|_{Y} + \|\mathcal{F}(q)+ w_{*|q}\|_{Y} \right) \mathrm{d} \mu_{q}(q) \| m_1- m_2\| _{Y}\\
      &\leq M \|m_1-m_2\|_{Y}.
    \end{split}
\end{equation}
From the definition of the Hellinger distance, we obtain that 
\begin{equation} \label{Hproof4}
  \begin{split}
     d_{\rm Hell}^2(\mu_{m_1}, \mu_{m_2}) = & \frac{1}{2} \int_{X} \left\{ \left(\frac{\exp (-G(q;m_{1}))}{L(m_{1})} \right)^{1/2} -\left(\frac{\exp (-G(q;m_{2}))}{L(m_{2})} \right)^{1/2}\right\}^2 \mathrm{d} \mu_{q}(q)  \\
    = & \frac{1}{2} \int_{X} \left\{ \left(\frac{\exp (-G(q;m_{1}))}{L(m_{1})} \right)^{1/2} -\left(\frac{\exp (-G(q;m_{2}))}{L(m_{1})} \right)^{1/2} \right. \\
    & \left. {} +\left(\frac{\exp (-G(q;m_{2}))}{L(m_{1})} \right)^{1/2} -\left(\frac{\exp (-G(q;m_{2}))}{L(m_2)} \right)^{1/2} \right\}^2 \mathrm{d} \mu_{q}(q) \\
    \leq & L(m_{1})^{-1} \int_{X} \left\{ {\exp \left(-\frac{1}{2}G(q;m_{1}) \right)} -{\exp \left(-\frac{1}{2}G(q;m_{2}) \right)} \right\}^2 \mathrm{d} \mu_{q}(q) \\
    & + \left| L(m_{1})^{-1/2}- L(m_{2})^{-1/2} \right|^2 \int_{X}  {\exp (-G(q;m_2))}  \mathrm{d} \mu_{q}(q).
  \end{split}
\end{equation}
With the Fernique theorem and Lemma~\ref{Lemma1}, it holds that 
\begin{equation}\label{Hproof5}
    \begin{split}
       & \int_{X} \left\{ {\exp \Big(-\frac{1}{2}G(q;m_1) \Big)} -{\exp \Big(-\frac{1}{2}G(q;m_{2}) \Big)} \right\}^2 \mathrm{d} \mu_{q}(q) \\
       & \leqslant \int_{X} \Big|\frac{1}{2}G(q;m_1) -\frac{1}{2}G(q;m_2) \Big|^2 \mathrm{d} \mu_{q}(q) \\ 
       & \leqslant \frac{ \left|\Sigma_{w|q}^{-2}\right|}{16} \int_{X}  \Big|\|m_{1}-\mathcal{F}(q)- w_{*|q}\|_{Y}^2 -\|m_{2}-\mathcal{F}(q)- w_{*|q}\|_{Y}^2\Big|^2 \mathrm{d} \mu_{q}(q) \\
       &\leqslant M \|m_{1}-m_{2}\|_{Y}^2.
    \end{split}
\end{equation}
Using the bounds on $L(m_1)$ and $L(m_2)$, we have that
\begin{equation} \label{Hproof6}
    \left | L(m_1)^{-1/2}- L(m_2)^{-1/2} \right |^2 \leqslant M \max \Big(L(m_1)^{-3}, L(m_2)^{-3} \Big) | L(m_{1})- L(m_{2}) |^2.
\end{equation}
Combining \eqref{Hproof1}-\eqref{Hproof6}, we  that
\begin{equation*}
   d_{\rm Hell}(\mu_{m_1}, \mu_{m_2}) \leqslant M\|m_1-m_2\|.
\end{equation*}
\end{proof}

We are now ready to present the MCMC method to reconstruct the trajectory $z(t)$ by incorporating in the priors the information obtained by the {ADSM}.

\begin{itemize}
    \item[] {\bf ADSM-MCMC:}
    \item[1.] Given $p$, $J$, and measured data $u(x_l,t_{j}^{n})$ on $U_T$.
    \item[2.] For $j=1,\ldots, J$, do
    \begin{itemize}
        \item[a.] Use the {ADSM} to find $y_j$ for $T_j$ and set  $q_{j}^{(1)}=y_j$.
        \item[b.] For $k=1,\cdots,K-1$, do
        \begin{itemize}
            \item[I.]
        Generate \[\Tilde{q}=\beta y_{j}+ (1-\beta)q_{j-1} + \sigma \mathcal{N}(0,1) \text{ if } j >1 \,(\beta = 1 \text{ if } j=1);\]
        \item[II.]  Compute $\alpha(\tilde{q},q_{j}^{(k)}) = \min \left(1, \dfrac{\pi(\tilde{q})}{\pi\left(q_{j}^{(k)}\right)} \right)$;
        \item[III.] Draw $\tilde{\alpha} \sim \mathcal{U}(0,1)$. If $\alpha>\tilde{\alpha}$, then $q_{j}^{(k+1)}=\tilde{q}$, otherwise $q_{j}^{(k+1)}=q_{j}^{(k)}$.
        \end{itemize}
        \item[c.] Set $q_{j}=\frac{1}{K} \displaystyle{ \sum_{k=1}^K q_{j}^{(k)}}$.
    \end{itemize}
    \item[3.] Output the moving path $\{q_j\}_{j=1}^J$. 
\end{itemize}

%\begin{table}[h]
%\centering
%\begin{tabular}{cp{.8\textwidth}}
%\toprule
%\multicolumn{2}{l}{{\bf Algorithm ADSM-MCMC:}\quad Sampling-Bayesian scheme for imaging the moving source}\\
%\midrule
%{\bf Step 1:} & Properly choose period $p$ and collect the data $u(x,t)$ at the measurement points $\{x_l\}$ on $\Gamma$ and a sequence of discrete time $\{t^n_j\}\in (0,T)$ and for each $j$, $\{t^n_j\}\in T_j$. Use {\bf ADSM} to find $y_1$ and $y_2$; \\
%{\bf Step 2:} & Choose the initial value $q_0=\frac{y_1+y_2}{2}$ and $j=1$; \\
%{\bf Step 3:} & Let $q_{j}^{(1)}=q_{j-1}$ and use the {\bf ADSM} to find $y_j$. For $k=1,\cdots,K-1$, generate \[
%\Tilde{q}=\lambda q_{j-1}+ (1-\lambda) y_j + \sigma \mathcal{N}(0,1), \lambda \in [0,1];
%\] \\
%{\bf Step 4:} & Compute $\alpha(\tilde{q},q_{j}^{(k)}) = \min \left(1, \dfrac{\pi(\tilde{q})}{\pi(q_{j}^{(k)})} \right)$; \\
%{\bf Step 5:} & Draw $\tilde{\alpha} \sim \mathcal{U}(0,1)$. If $\alpha>\tilde{\alpha}$, then $q_{j}^{(k+1)}=\tilde{q}$, otherwise $q_{j}^{(k+1)}=q_{j}^{(k)}$; \\
%{\bf Step 6:} & Set $q_j= \displaystyle{ \sum_{k=1}^k q_{j}^{(k)}}$. Stop when $j=J$, otherwise, $j\leftarrow j+1 $ and repeat from Step 3.\\
%\bottomrule
%\end{tabular}
%\end{table}

We remark that how to use of the locations obtained by the ADSM is problem-dependent. For example, if the target moves rather slow, one can set $\beta = 0$. If the target moves fast, one can choose $\beta=1$.

\section{Numerical Examples} \label{sec:numerical_results}

In this section, we will present several numerical examples to demonstrate the performance of the proposed method. On each $T_j$, we choose $\lambda(t)$ to be the Ricker wavelet, which has been widely used in geophysics (see, e.g., \cite{Wang2018}),
\begin{equation}\label{Ricker}
    \lambda(t) = \left(1-2\pi^2f_{0}^2(t-p/2)^2\right) \exp\left(-\pi^2f_{0}^2(t-p/2)^2\right),\quad t\in [0,p],
\end{equation}
where $f_0\in \mathbb{R}_+$ is the central frequency.
The synthetic data $u(x,t)$ is generated using \eqref{exact solution 2} with some noises:
\[
u^{\varepsilon}(x,t)=u(x,t)(1+\varepsilon r),
\]
where $\varepsilon$ is the noise level and $r$ is a random number from the uniform distribution $\mathcal{U}[-1, 1]$.

For all examples, we set $c=330$, the terminal time $T=4$ and $p=0.1$. The number of equidistant time steps in one period is $N_{p}$. Since $(0,T]= \bigcup_{j=1}^{J} T_{j}$, where $J=T/p$, we have the time increment $\Delta t=p/N_{p}$ and $N_{T}=J N_{p}$. The time discretization is $t_{j}^{n}=\left((j-1)N_{p} + n\right)\Delta t$, $n=1,2,\cdots, N_{p}$, $j=1,\cdots,J$.  Using the spherical coordinates $(R,\theta,\eta)$
\[
 x:= R(\sin\eta \cos\theta, \sin\eta \sin\theta, \cos\eta),
\] 
the measurement aperture $\Gamma$ is a patch on the sphere with radius $R=7$, $\theta \in (0, 2\pi]$, $\eta \in (0, \pi]$. Assume that three measurement data sets are given by
$$\{u(x, t): (x,t)=(\theta,\eta,t)\in S_{i},  i=1,2,3\},$$ where (see Fig~\ref{fig: lambda and observation})
\begin{align*}
    S_1 & =\Big\{\frac{\pi}{16}l,~l=1,\cdots,32 \Big\}\times\Big\{\frac{\pi}{5}s,~s=1,2,3,4 \Big\} \times \{ t_{j}^{n}\},~N_x=128, ~N_{p}=12,\\
    S_2 & =\Big\{\pi+\frac{\pi}{8}l,~l=0,1,\cdots,8 \Big\}\times\Big\{\frac{\pi}{4},\frac{\pi}{2} \Big\} \times \{ t_{j}^{n}\} ,~N_{x}=18, ~N_{p}=10,\\
    S_3 & =\Big\{\pi, \frac{5\pi}{4}, \frac{3\pi}{2} \Big\}\times\Big\{\frac{\pi}{4},\frac{\pi}{2}\Big\}\times \{ t_{j}^{n}\},~N_{x}=6, ~N_{p}=7.  
\end{align*}
Note that the spacial coverage of $S_1$ is the whole sphere corresponding to the case of full aperture data.
The spacial coverage is $1/4$ of the sphere for $S_2$ and $1/8$ for $S_3$. In fact, for $S_3$, there are just 6 measurement locations.

Let the central frequency $f_{0}=100$ for the temporal function $\lambda(t)$ in \eqref{Ricker} in one period (see  Fig~\ref{fig: lambda and observation}(a)). The measurement locations are showed in Fig~\ref{fig: lambda and observation}(b)-(d) for $S_{1}$, $S_{2}$ and $S_{3}$, respectively.
\begin{figure}[ht] 
\centering
\subfloat[]{\includegraphics[width = 2in]{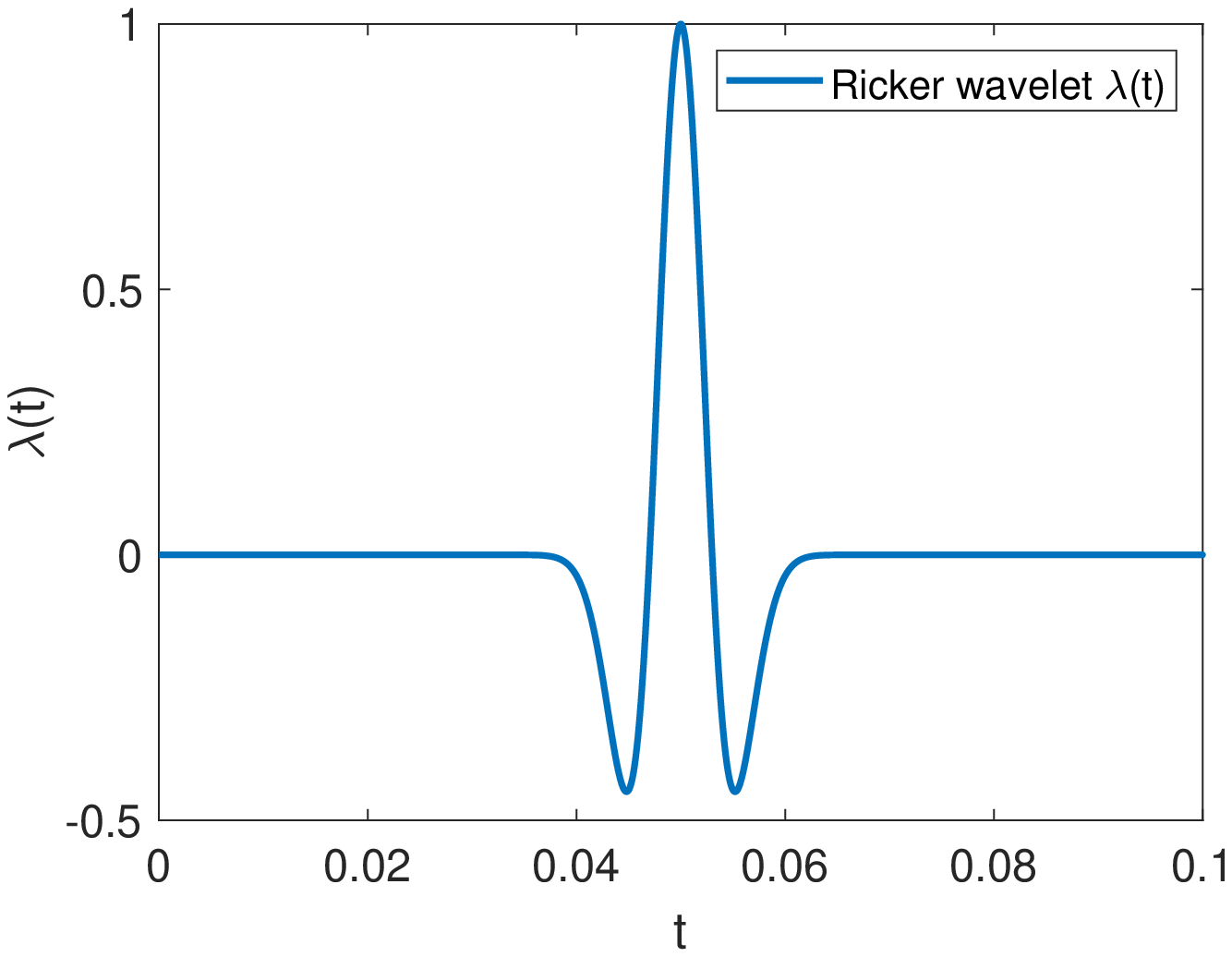}} \hspace{0.2in}
\subfloat[]{\includegraphics[width = 2.3in]{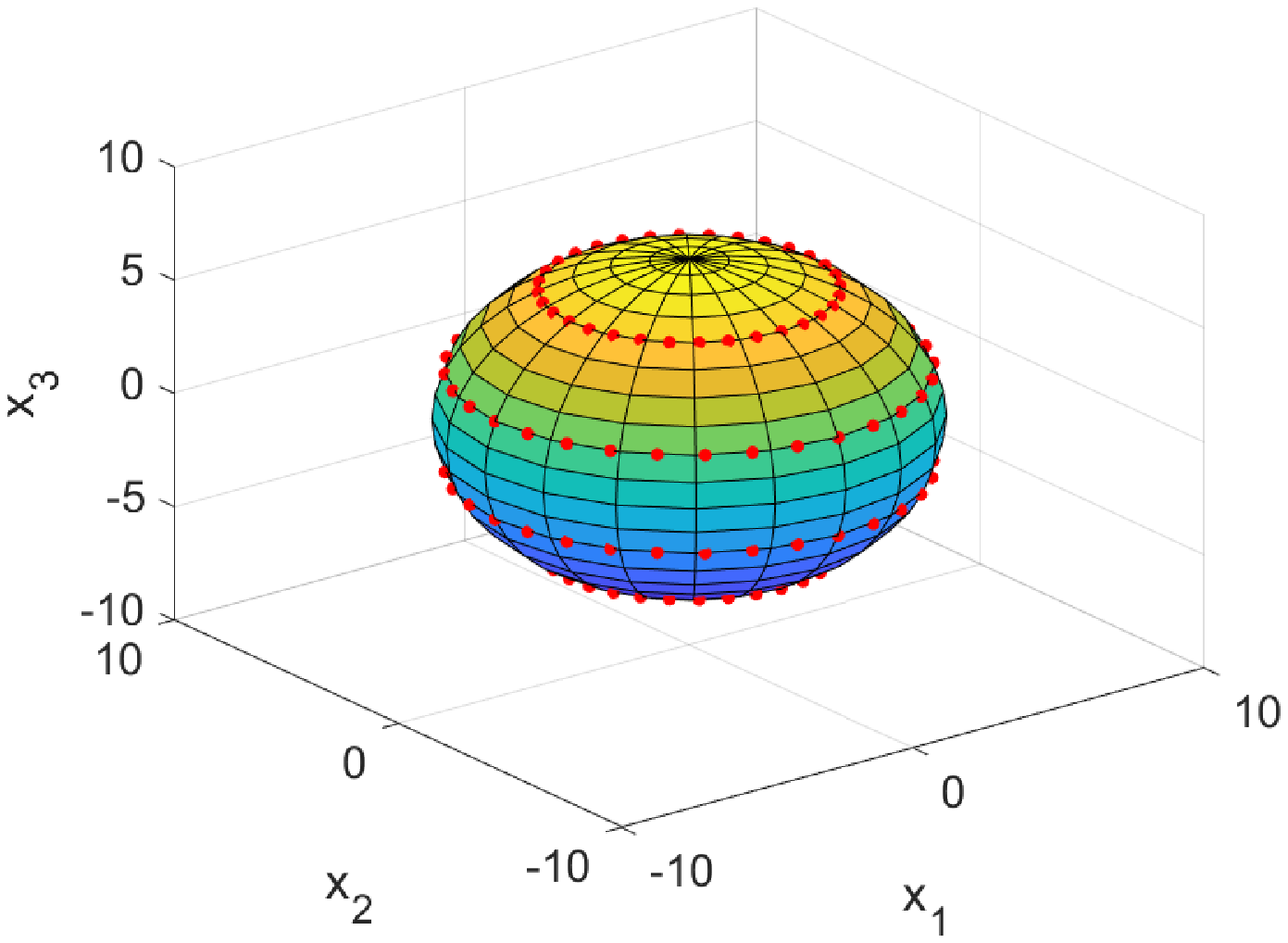}}\\
\subfloat[]{\includegraphics[width = 2.3in]{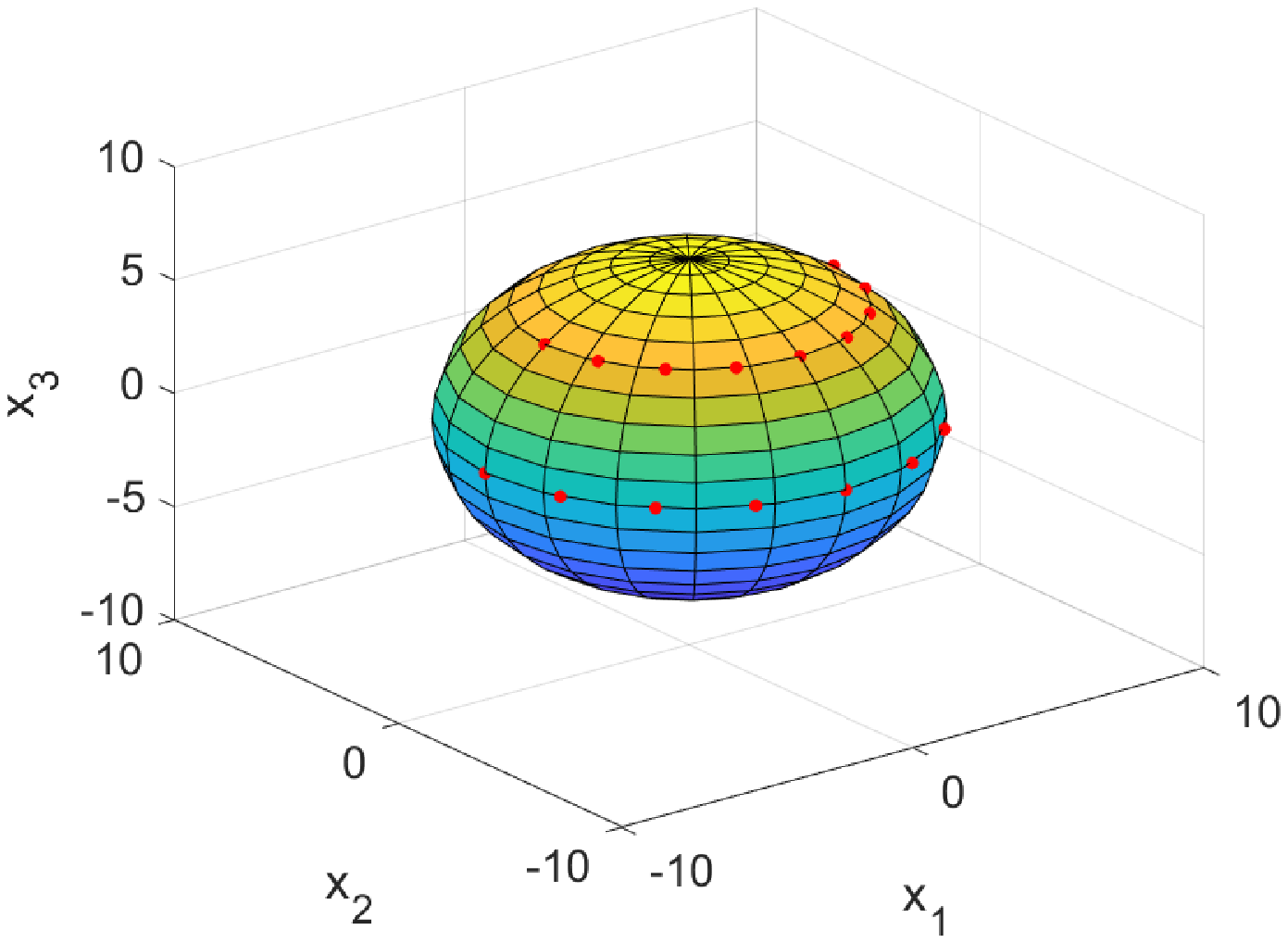}}
\subfloat[]{\includegraphics[width = 2.3in]{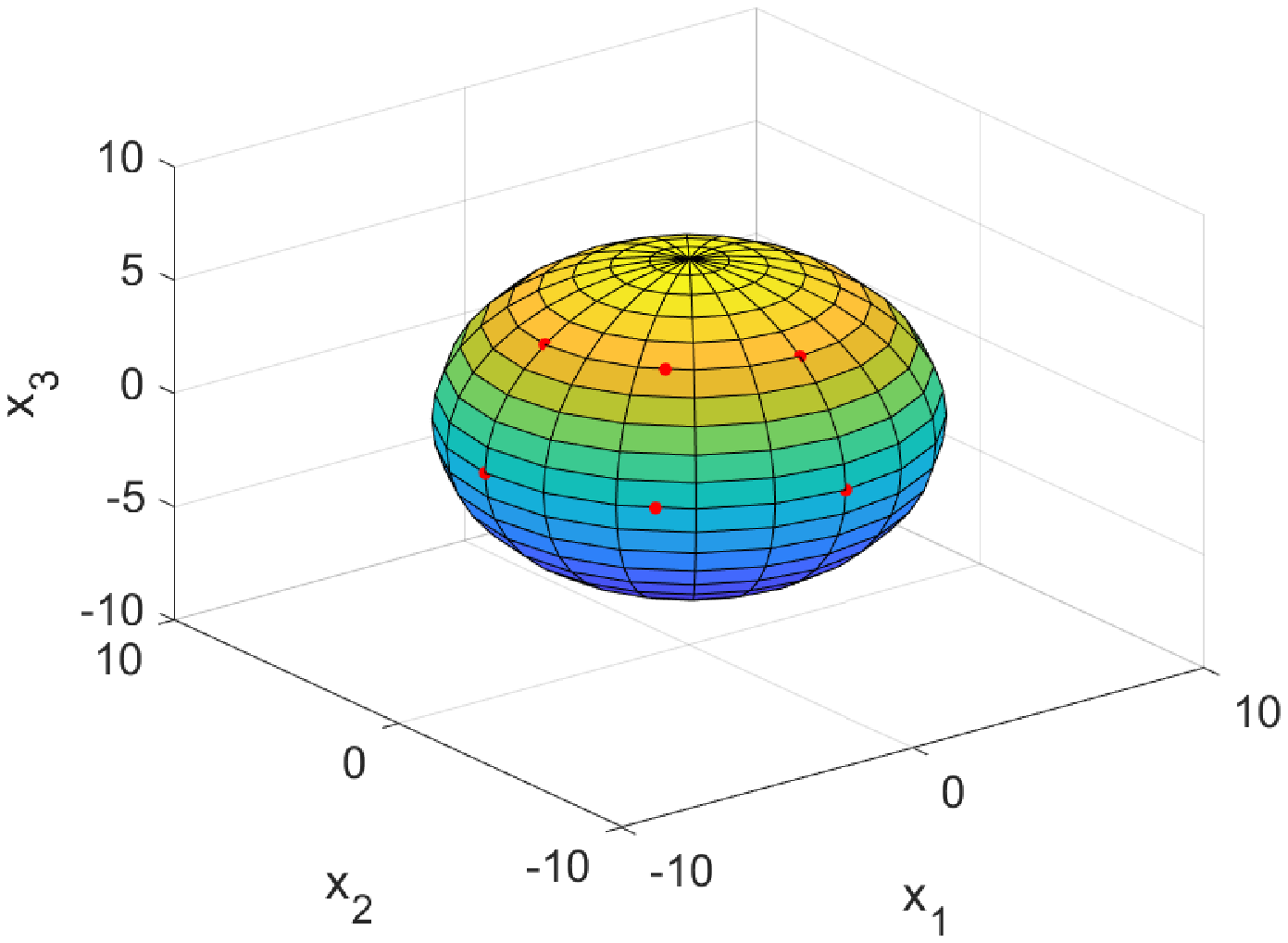}}
\caption{The pulse function and measurement locations. (a) the temporal function $\lambda(t)$; (b)-(d) measurement locations for $S_1, S_2$ and $S_3$.}
\label{fig: lambda and observation}
\end{figure}

We take $101\times 101 \times 101$ sampling points, denoted by $\mathcal{T}_h$, uniformly distributed in the sampling domain $D=[-5,5]^3$. For $j=1,2,\cdots,J$, the discrete version of the indicator function \eqref{IndicatorC} can be written as 
\begin{equation*}
    {\bm I} (y, T_j):= \frac{\sum\limits_{n=1}^{N_p}  \sum\limits_{l=1}^{N_x} |u(x_{l},t_j^n)\phi(x_{l},t_j^n;y) | }{\sum\limits_{n=1}^{N_p} \left(\sum\limits_{l=1}^{N_x} u^2(x_{l},t_j^n)  \right)^{1/2} \sum\limits_{n=1}^{N_p} \left(  \sum\limits_{l=1}^{N_x} \phi^2(x_{l},t_j^n;y)  \right)^{1/2} }, \quad ~y\in \mathcal{T}_h.
\end{equation*}
 The {ADSM} reconstructs the locations of the source as the global maximum points ${y_{j}}$ of the discrete indicator function $\bm{I}(y, T_{j})$ for $y \in \mathcal{T}_{h}$. The sequence $\{y_{j}\}_{j=1}^{J}$ is the reconstructed path $z(t)$ by the ADSM.
For partial data, the reconstructed path by the ADSM can be unsatisfactory. The result becomes worse for fewer measurement data as we shall see. 

Based on the information obtained by the ADSM, the Bayesian method is employed to improve the reconstructions.
We set $ w|q \sim \mathcal{N}(10^{-4} \mathbf{1}_{N_{x}\times 1},10^{-3}I_{N_{x}\times N_{x}})$, where $\mathbf{1}_{N_{x}\times 1}$ is a $N_{x}$-by-$1$ vector of ones and $I_{N_{x}\times N_{x}}$ is the $N_{x}$-by-$N_{x}$ identity matrix. For each $T_j$, $K=5000$ samples are generated in the MCMC algorithm. The conditional means (CM) are used as the locations of the target.  
% In all the numerical examples, since the target moves slow, we set $\beta=0$ when generating samples in the Bayesian inference.

\subsection{Example 1: Reconstruction of a C-shape path.} 
The exact moving path of a point source is given by (Fig.~\ref{fig: C} (d))
\[
z(t)=\left(1.5 + 3 \cos(4-t), 2+3\sin(2+t), 1.2-4\sin(t/2) \right), \quad t \in [0, T].
\]
We take $\Sigma_{qq}=0.2I_{3\times 3} $ as the covariance of the prior distribution. The prior mean $q_{1,*}$ for $j=1$ is $y_{1}$ which is obtained from the {ADSM} at $T_{1}$. At step $j=2,\cdots,J$, the prior mean $q_{j,*}$ is selected to be the {CM} of the Markov chain $\{q_{j-1}^{k}\}_{k=1}^{K}$ in the previous step, i.e. $q_{j,*}=q_{j-1}$.

The indicator functions $\bm{I}(y, T_1)$ using the measured data on $S_1, S_2, S_3$ are shown in Fig~\ref{fig: C} (a)-(c). The reconstructed path using the {ADSM} and the {ADSM-MCMC} are presented in Fig.~\ref{fig: C}(e)-(h) and (i)-(l), respectively. When the observation data is sufficient (the full aperture case $S_1$), the ADSM yields a satisfactory reconstruction as shown in Fig.~\ref{fig: C}(e). It can be seen that, when the measured data decrease, the reconstructions of both the {ADSM} and the {ADSM-MCMC} deteriorate (the second row (e)-(h) and the third row (i)-(l)). Comparing (f) and (j), (g) and (k), or (h) and (l), the {ADSM-MCMC} significantly outperforms the {ADSM}, in particular, when the measurement data are less. Moreover, the Bayesian inference is robust to noises as indicated by Fig.~\ref{fig: C}(k) and (l). The reconstructions are quite satisfactory for noise levels  $\varepsilon=1\%$ and $10\%$ with $(x,t)\in S_{3}$. 

\begin{figure}[ht]

\subfloat[]{\includegraphics[width = 1.55in]{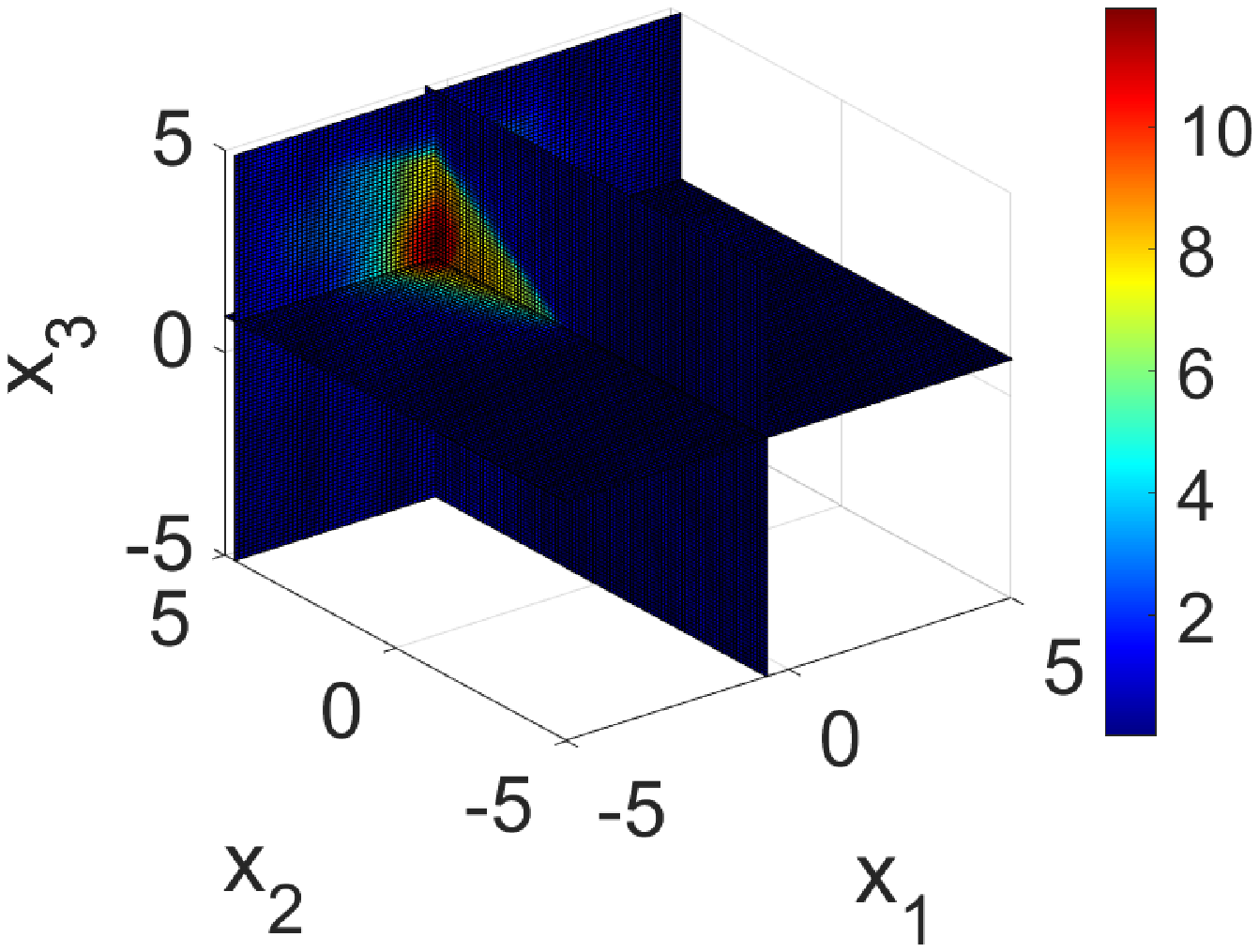}}
\subfloat[]{\includegraphics[width = 1.55in]{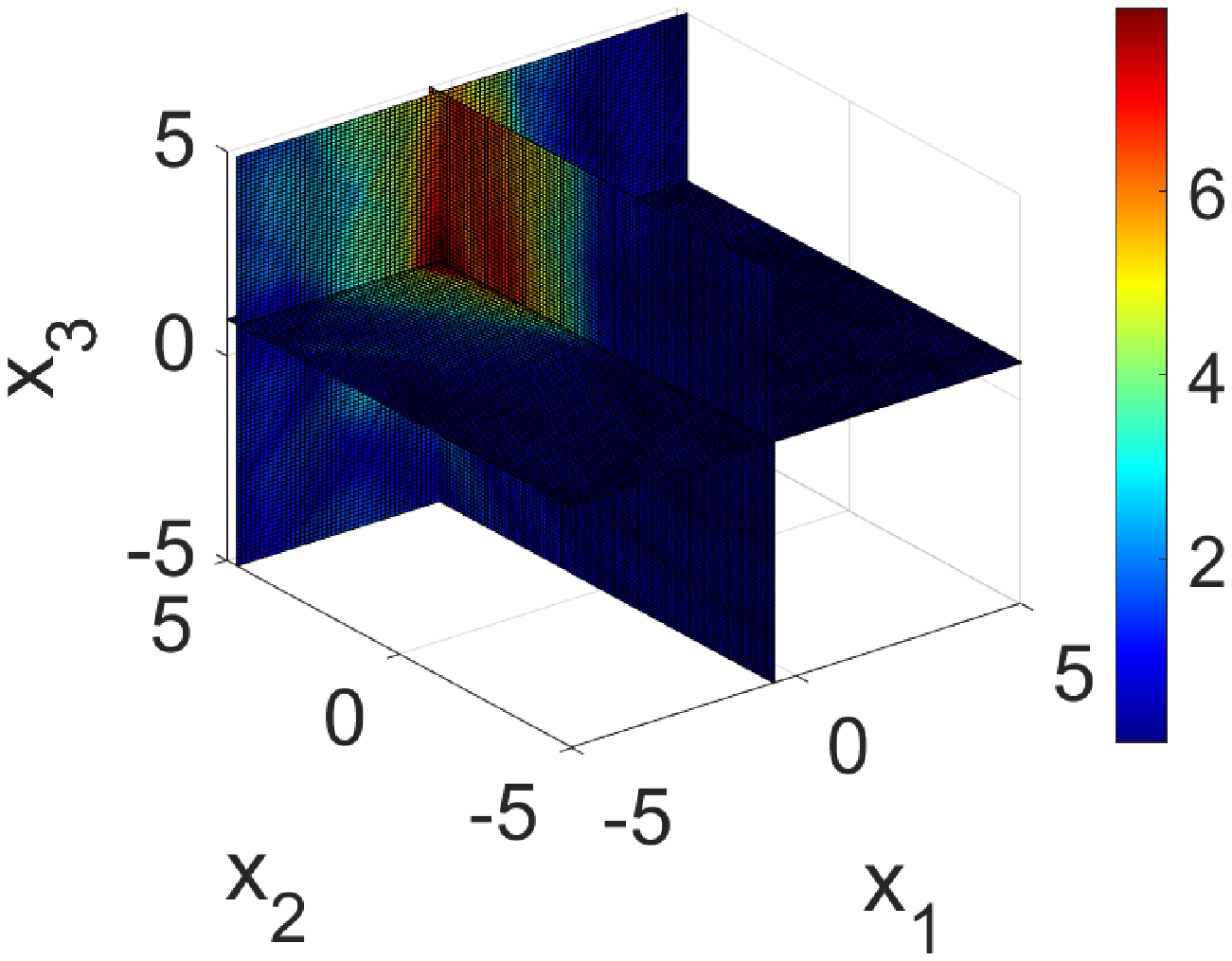}}
\subfloat[]{\includegraphics[width = 1.55in]{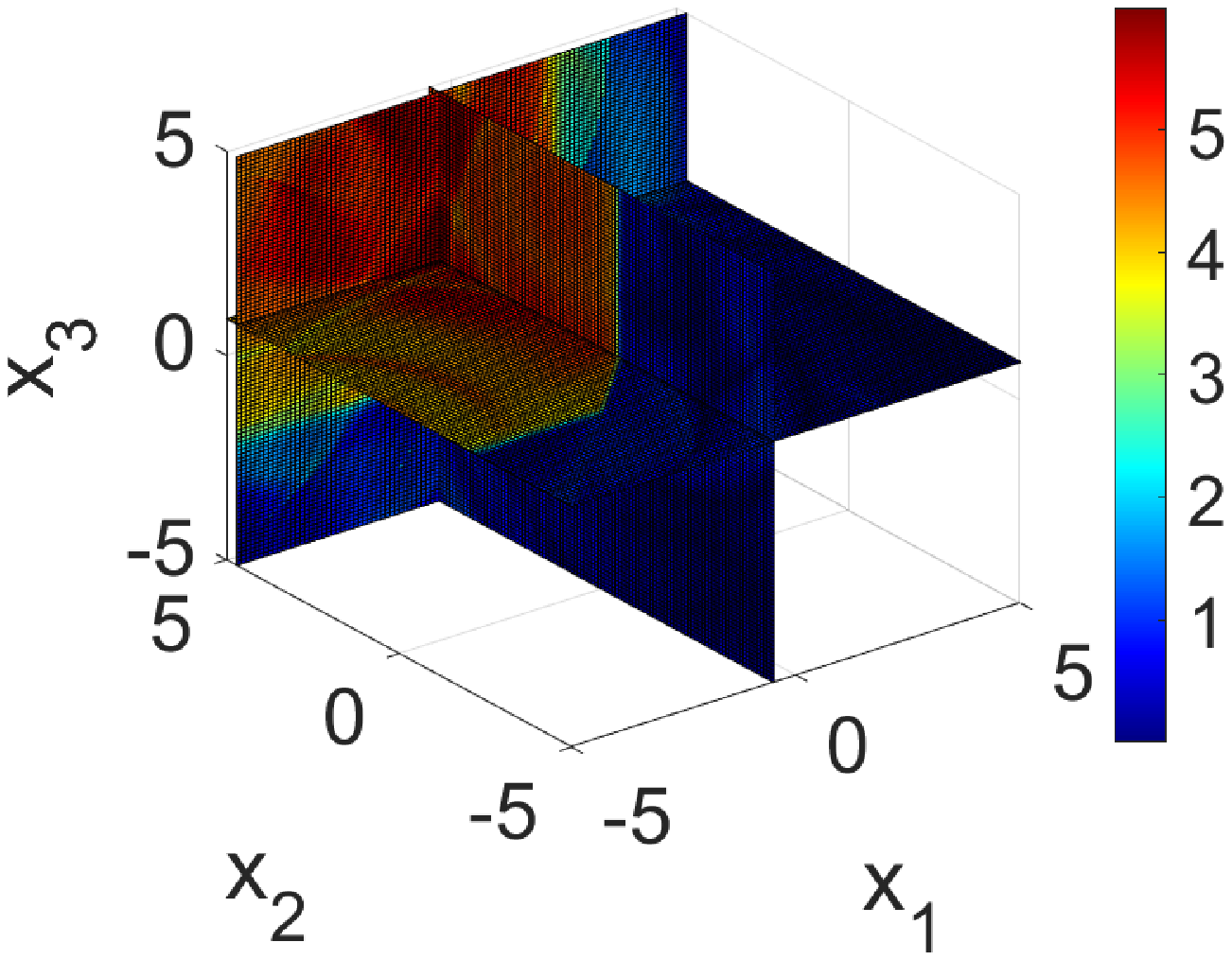}}
\subfloat[]{\includegraphics[width = 1.55in]{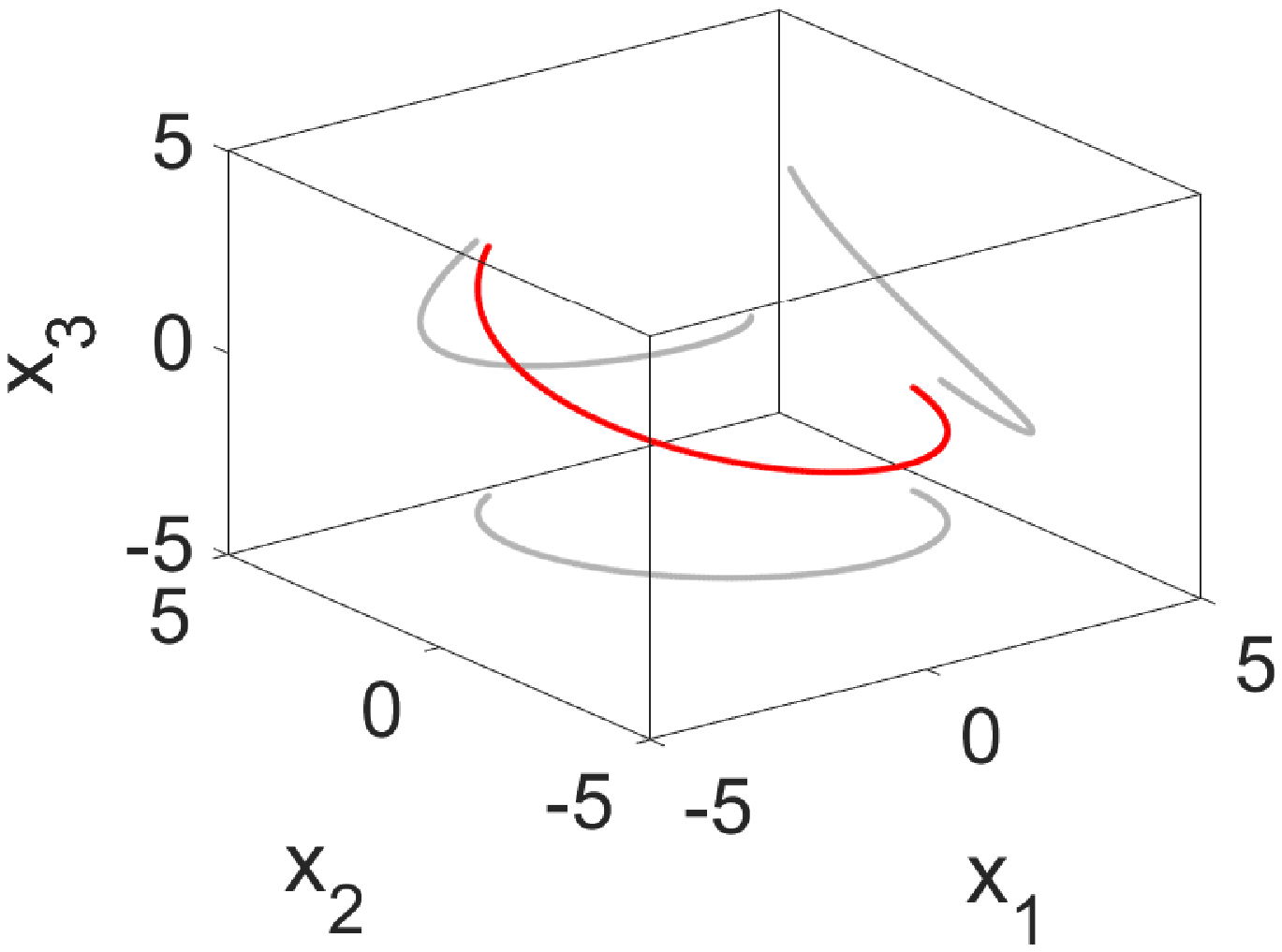}}
\\
\subfloat[]{\includegraphics[width = 1.55in]{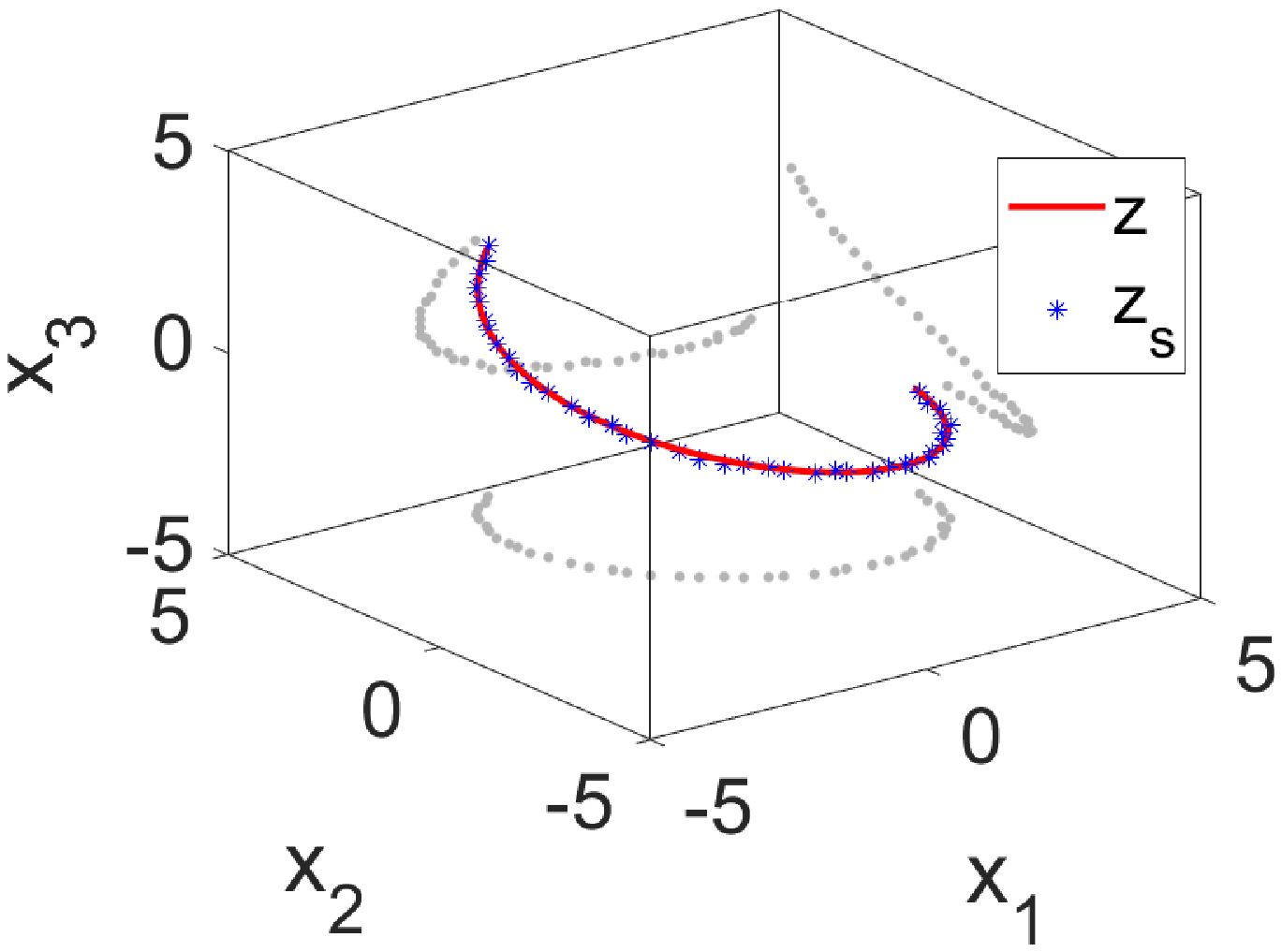}} 
\subfloat[]{\includegraphics[width = 1.55in]{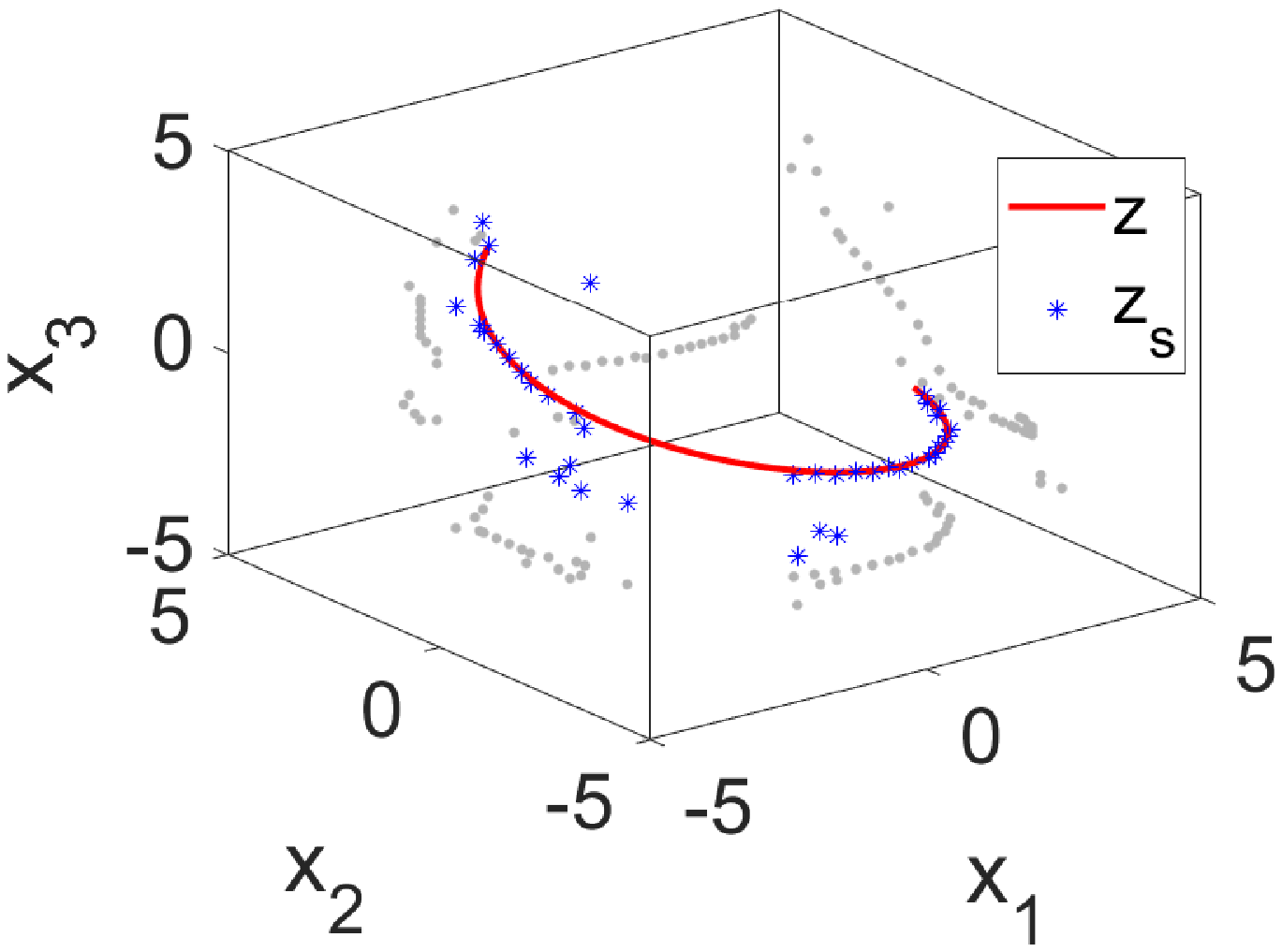}}
\subfloat[]{\includegraphics[width = 1.55in]{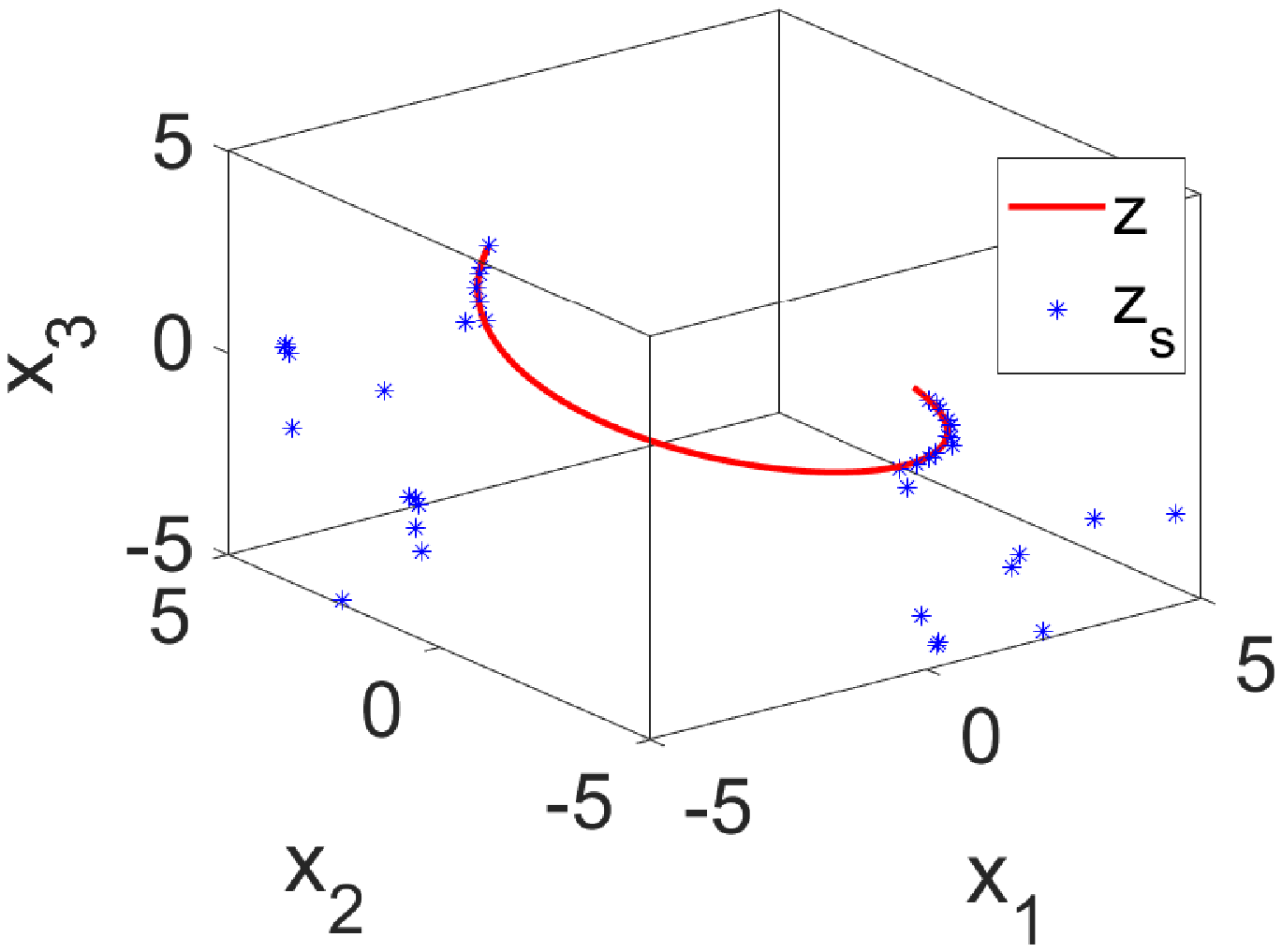}}
\subfloat[]{\includegraphics[width = 1.55in]{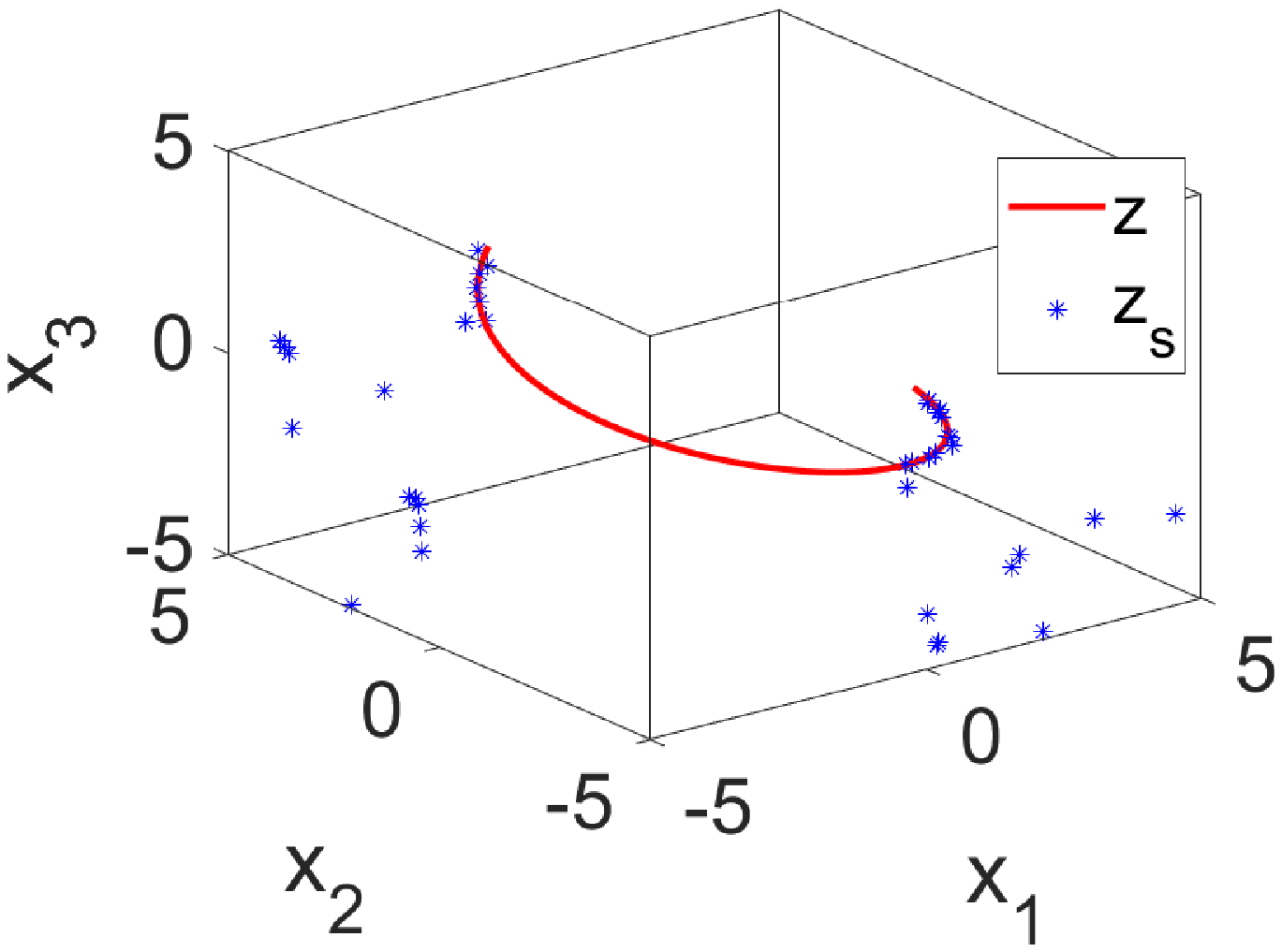}}
\\

\subfloat[]{\includegraphics[width = 1.55in]{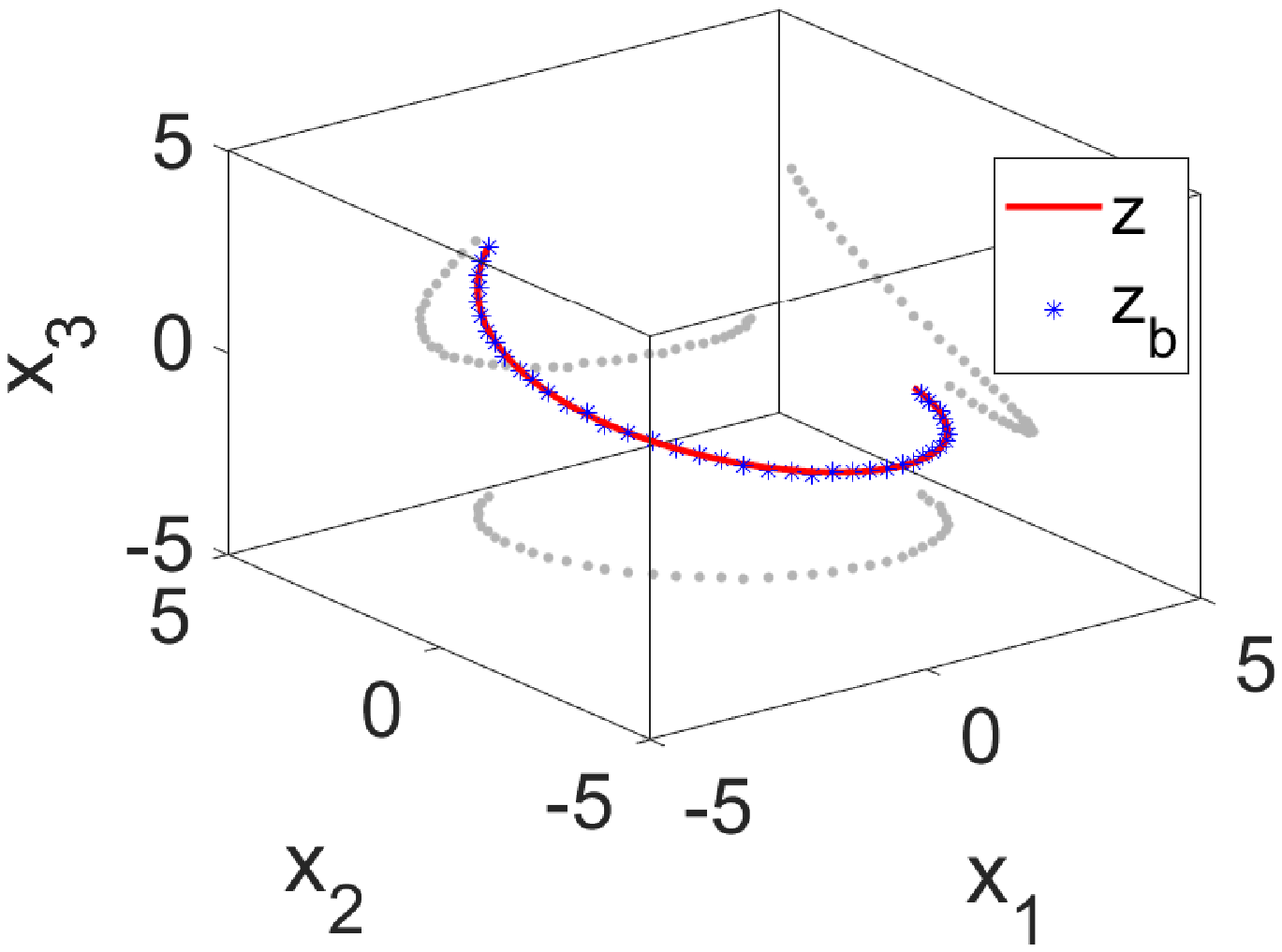}} 
\subfloat[]{\includegraphics[width = 1.55in]{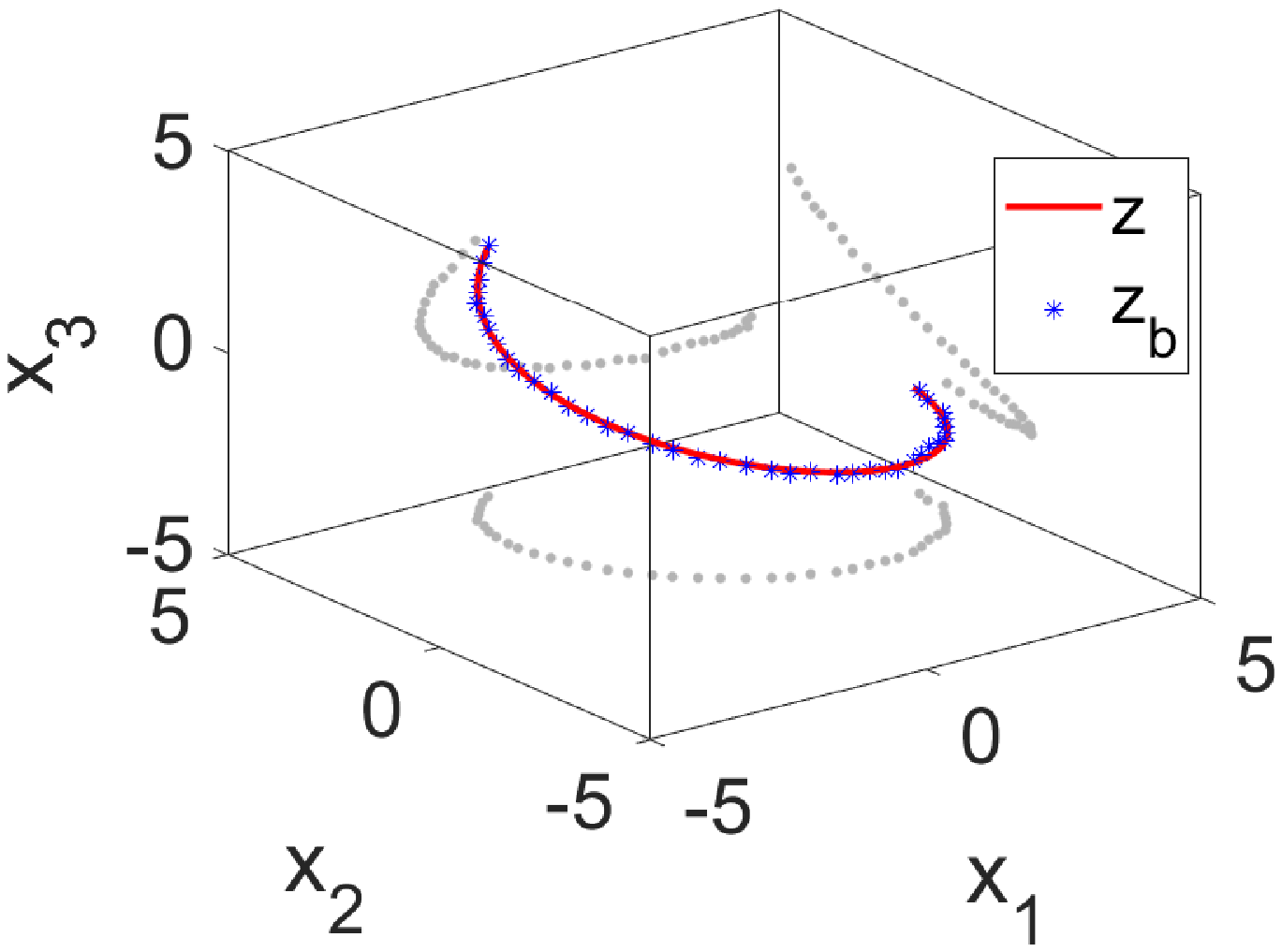}}
\subfloat[]{\includegraphics[width = 1.55in]{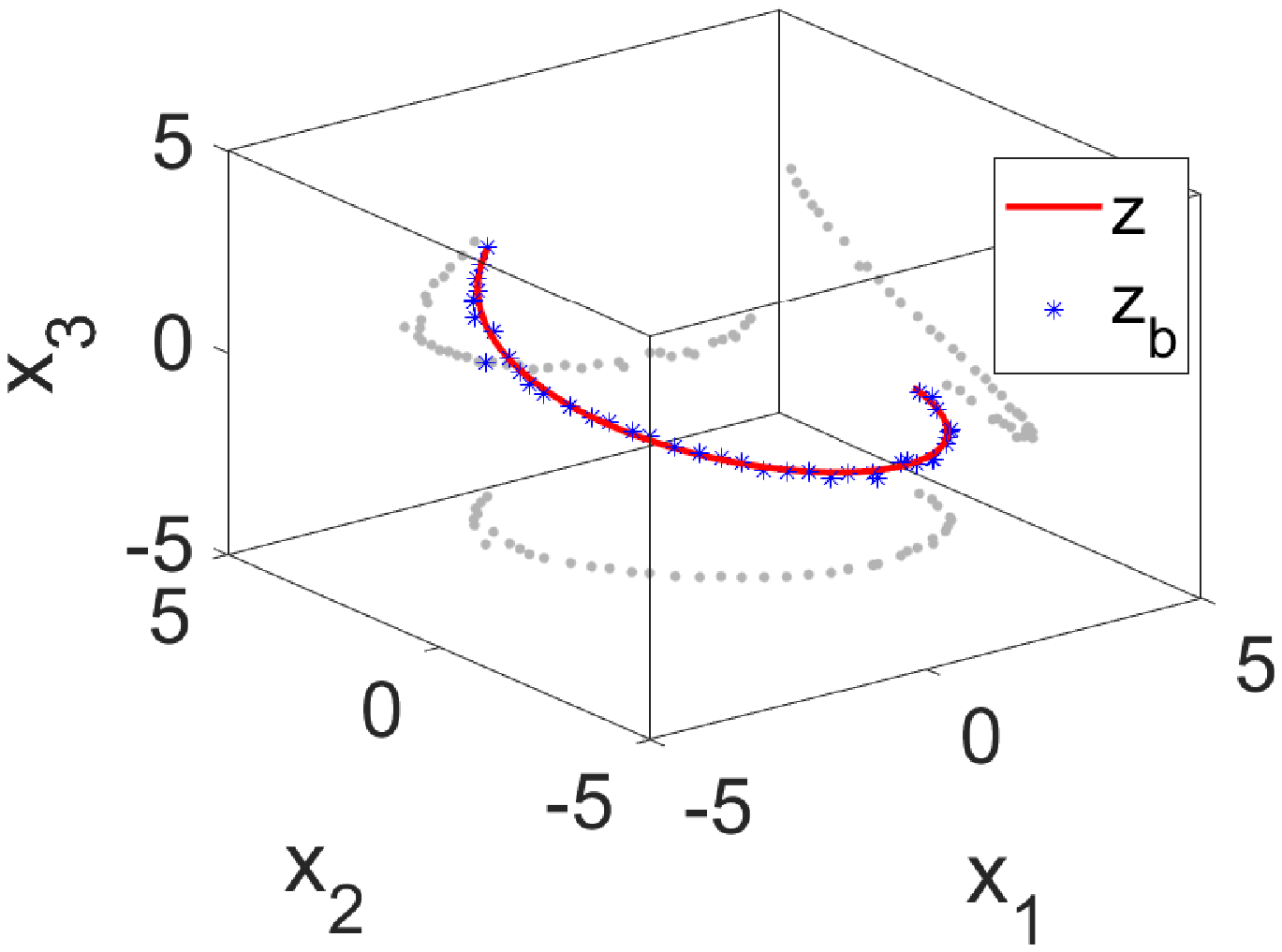}}
\subfloat[]{\includegraphics[width = 1.55in]{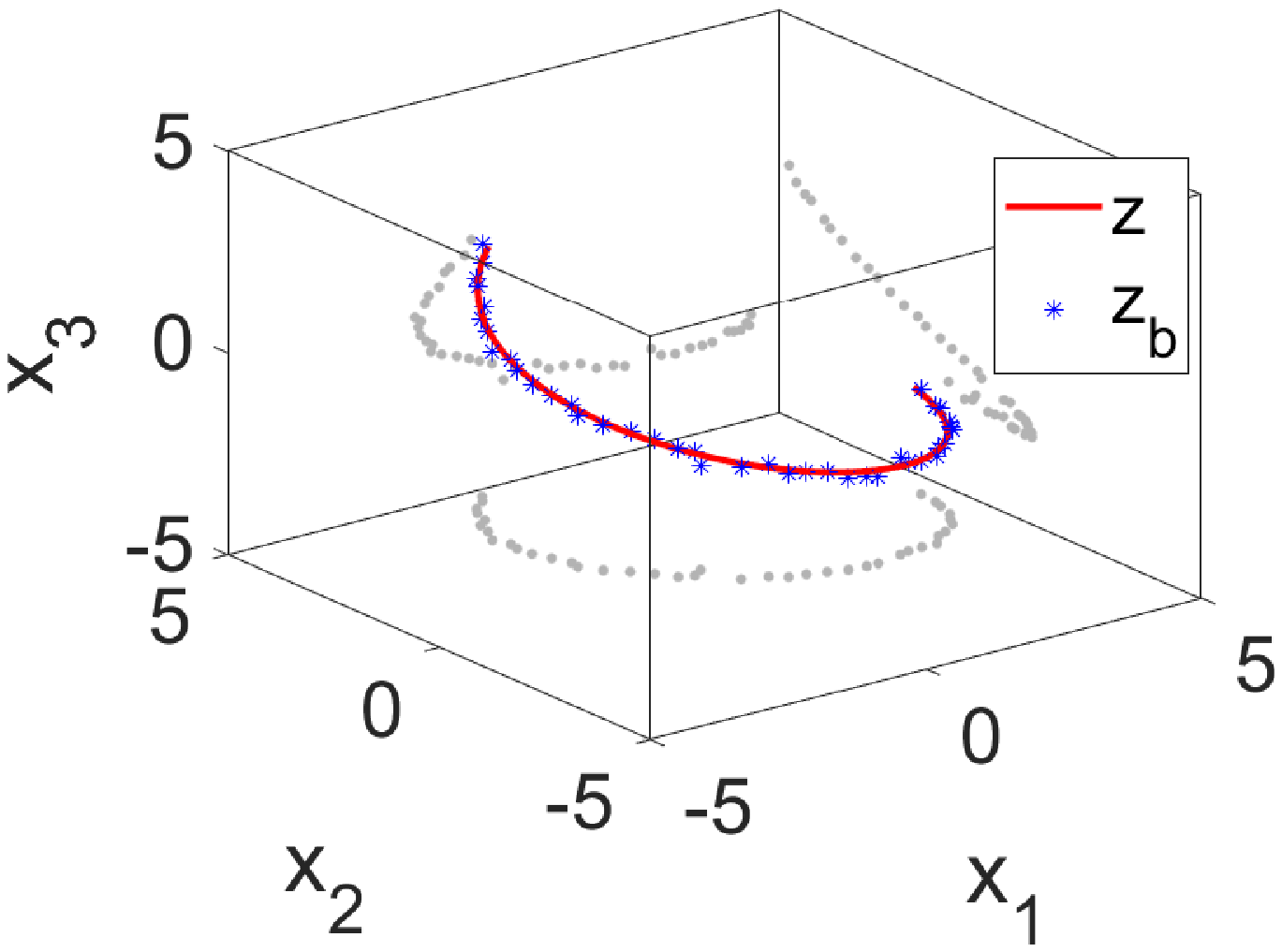}}

\caption{Reconstructions of a C-shape path. (a)-(c) are cross-section plots of the indicator functions $\bm{I}(y,T_1)$. (d) is the exact trajectory $z(t)$. (e)-(g) are the reconstructions by the {ADSM} using data on $S_1, S_2, S_3$ with $\varepsilon =1\%$. (h) is the reconstruction by the ADSM using data on $S_3$ with $\varepsilon =10\%$. (i)-(k) are the reconstructions by the {ADSM-MCMC} using data on $S_1, S_2, S_3$ with $\varepsilon =1\%$. (l) is the reconstruction by the ADSM-MCMC using data on $S_3$ with $\varepsilon =10\%$. $\mathsf{z}$, $\mathsf{z_{s}}$, and $\mathsf{z_{b}}$ represent the exact path, the reconstruction by the {ADSM}, and the reconstruction by the {ADSM-MCMC}, respectively.}
\label{fig: C}
\end{figure}

\subsection{Example 2: Reconstruction of a bow-shape path.} 
We consider the path given by (Fig.~\ref{fig: Bow} (d))
\[
z(t)=\left(3 - 1.6 t, 0.2+2.6\sin(1.25t), -0.3-2.1\sin(1.75t) \right), \quad t \in [0, T].
\]
The covariance $\Sigma_{qq}=0.2I_{3\times 3}$ is used. Fig.~\ref{fig: Bow} show the reconstructions. Again, Fig.~\ref{fig: Bow} (a)-(c) shows the plots of the indicator functions using the measured data for $S_1, S_2, S_3$, respectively.  The recovered moving paths by the {ADSM} are given in Fig.~\ref{fig: Bow} (e)-(h), which suggest that the { ADSM} can provide useful information, but far from accurate when the measured data are partial. In the second step, the Bayesian method refines the paths based on the information obtained by the {ADSM}. Similar to the previous example, as the measurement data become less, the performance of the {ADSM} is getting worse (Fig.~\ref{fig: Bow} (f)-(h)). The results of the {ADSM-MCMC} Fig.~\ref{fig: Bow} (j)-(l) are significantly better than those produced by the {ADSM}. 

%Fig~\ref{fig: Bow}(g)-(h) shows that the reconstruction by the {ADSM} in the second time period $T_{2}$ deviates drastically from the exact location of $z(t)$.
%Nevertheless, Bayesian approach is capable of reconstructing the overall correct path of $z(t)$ even with limited noisy measurements. \textcolor{blue}{Yanfang: It is not clear here. You need to provide more results here. What does it mean by "takes some time"?}
\begin{figure}[ht]

\subfloat[]{\includegraphics[width = 1.55in]{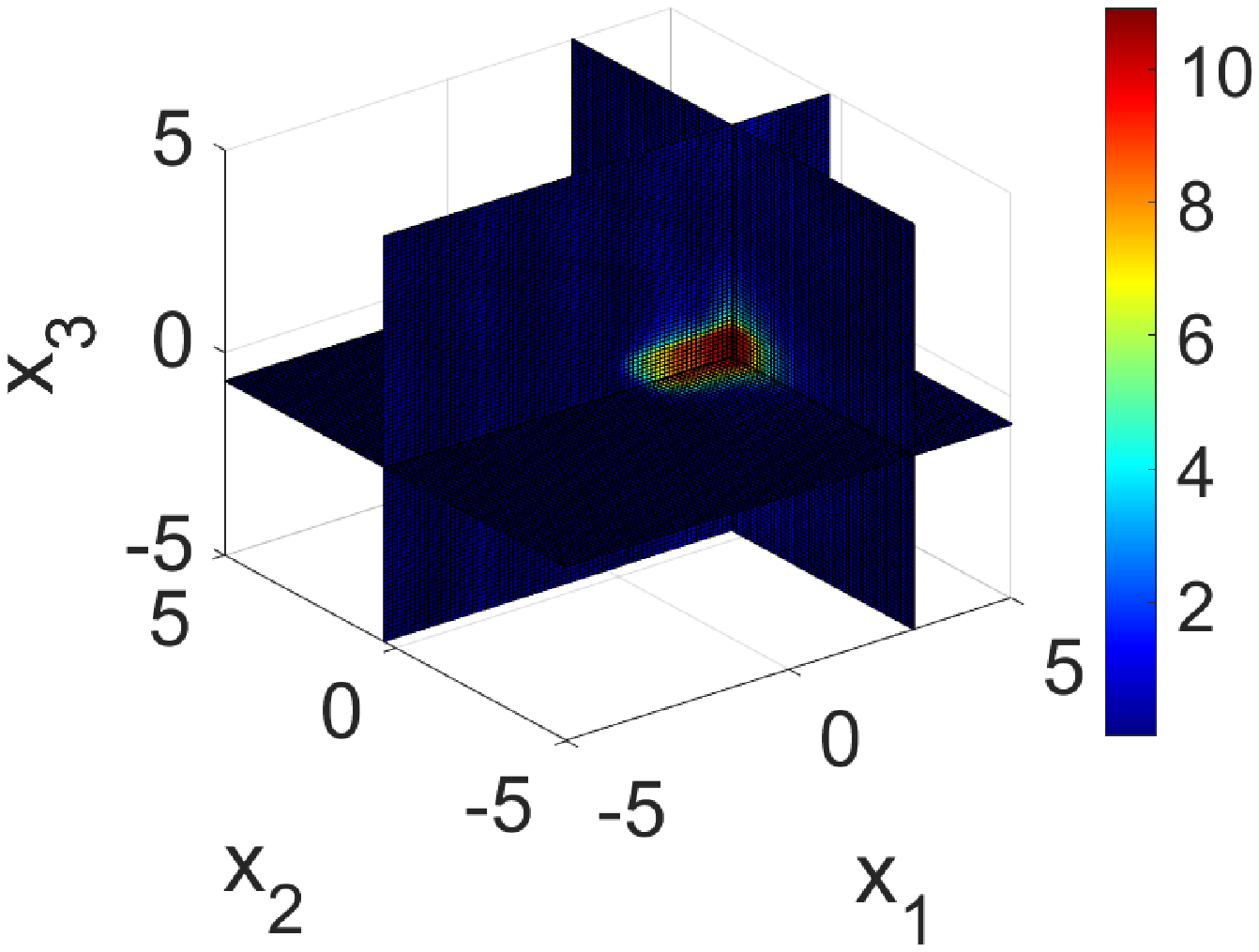}}
\subfloat[]{\includegraphics[width = 1.55in]{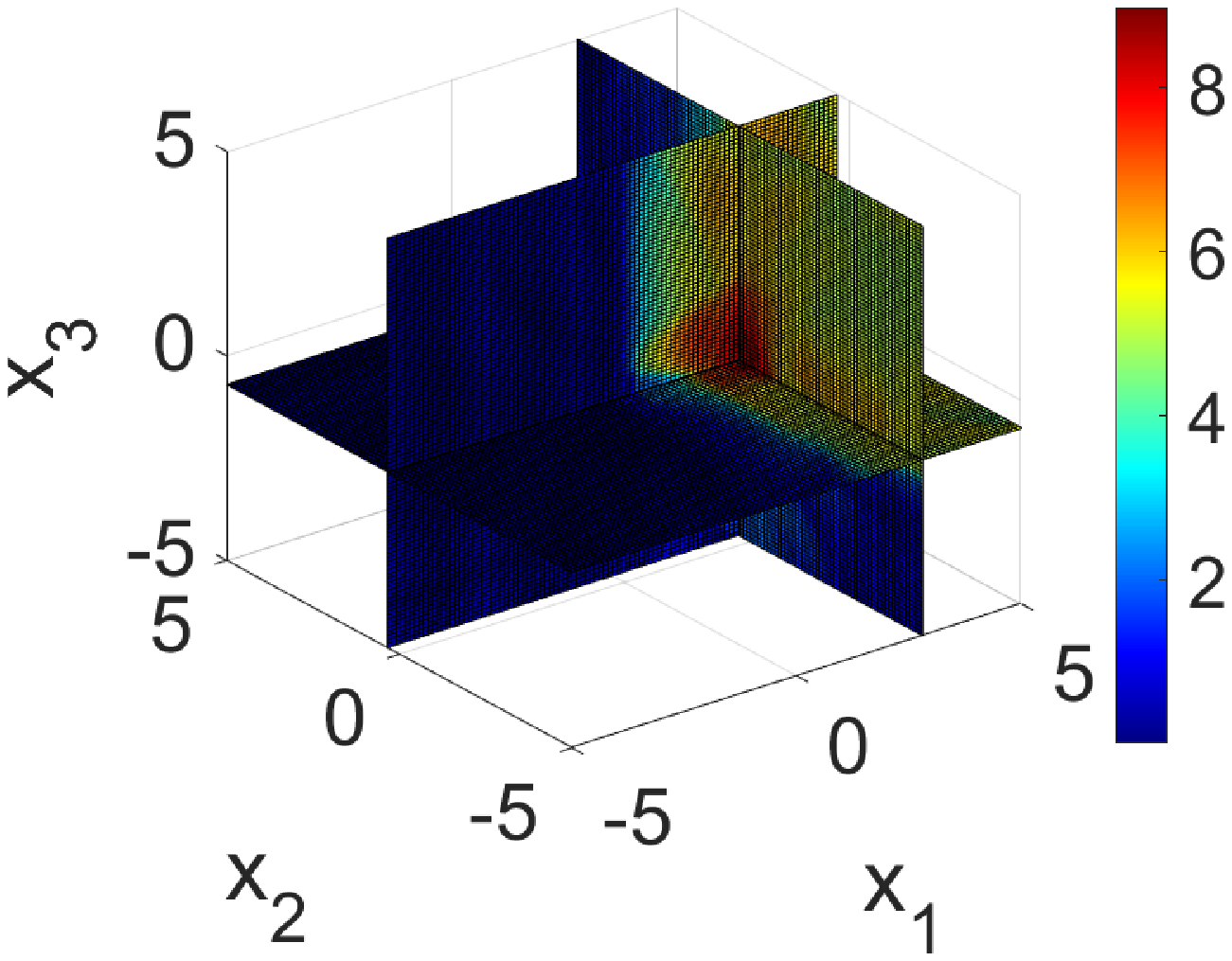}}
\subfloat[]{\includegraphics[width = 1.55in]{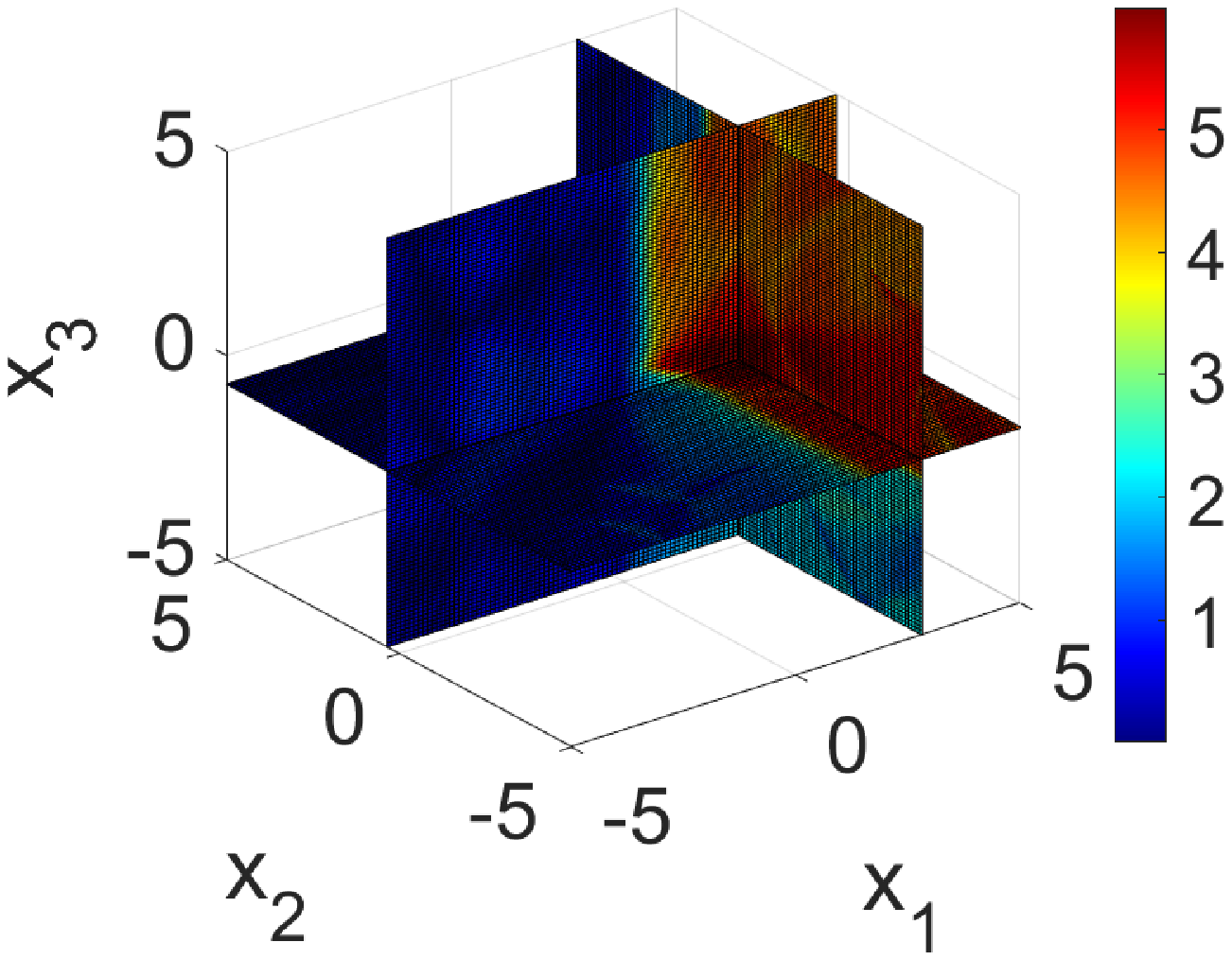}}
\subfloat[]{\includegraphics[width = 1.55in]{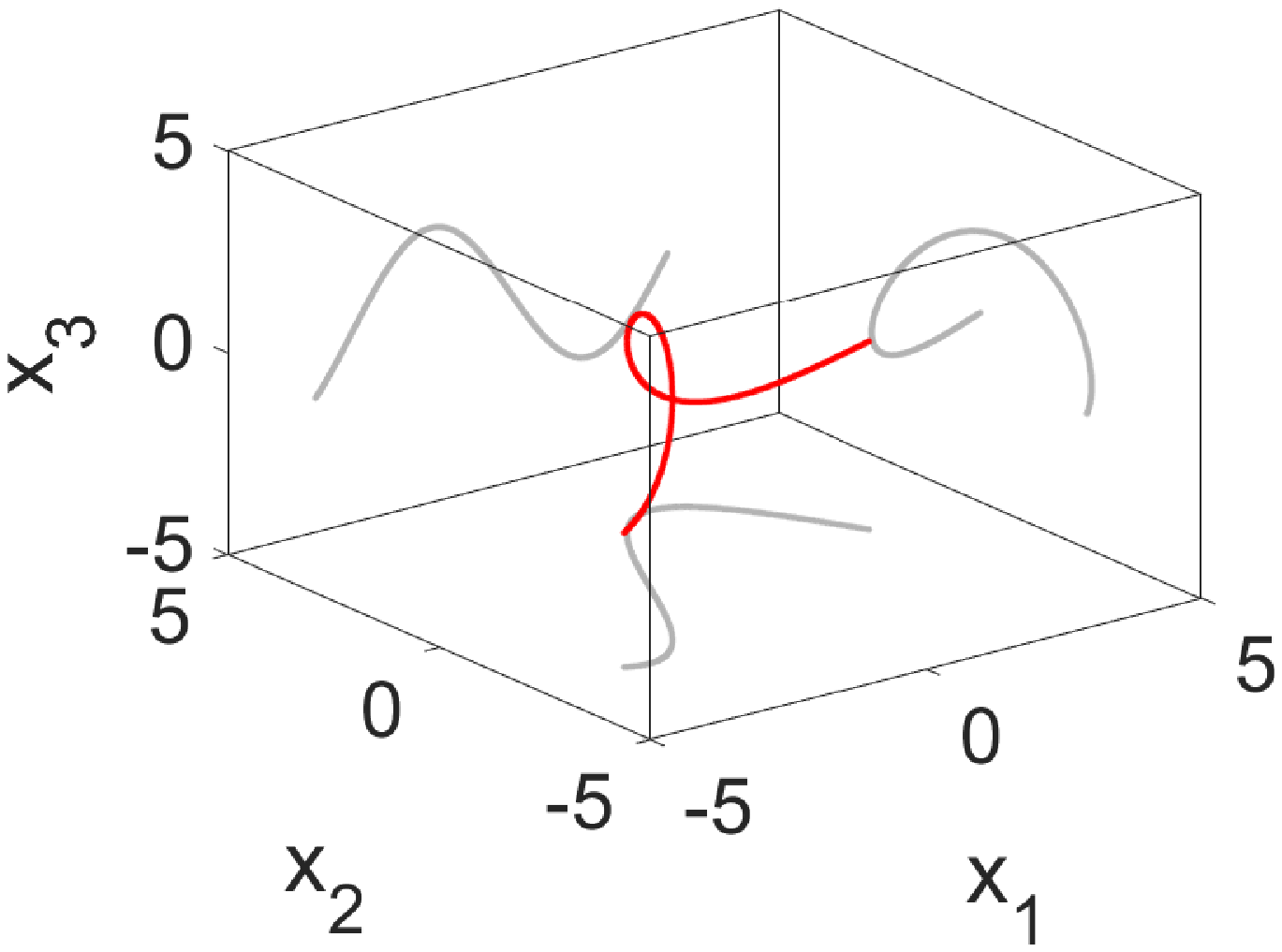}} 
\\
\subfloat[]{\includegraphics[width = 1.55in]{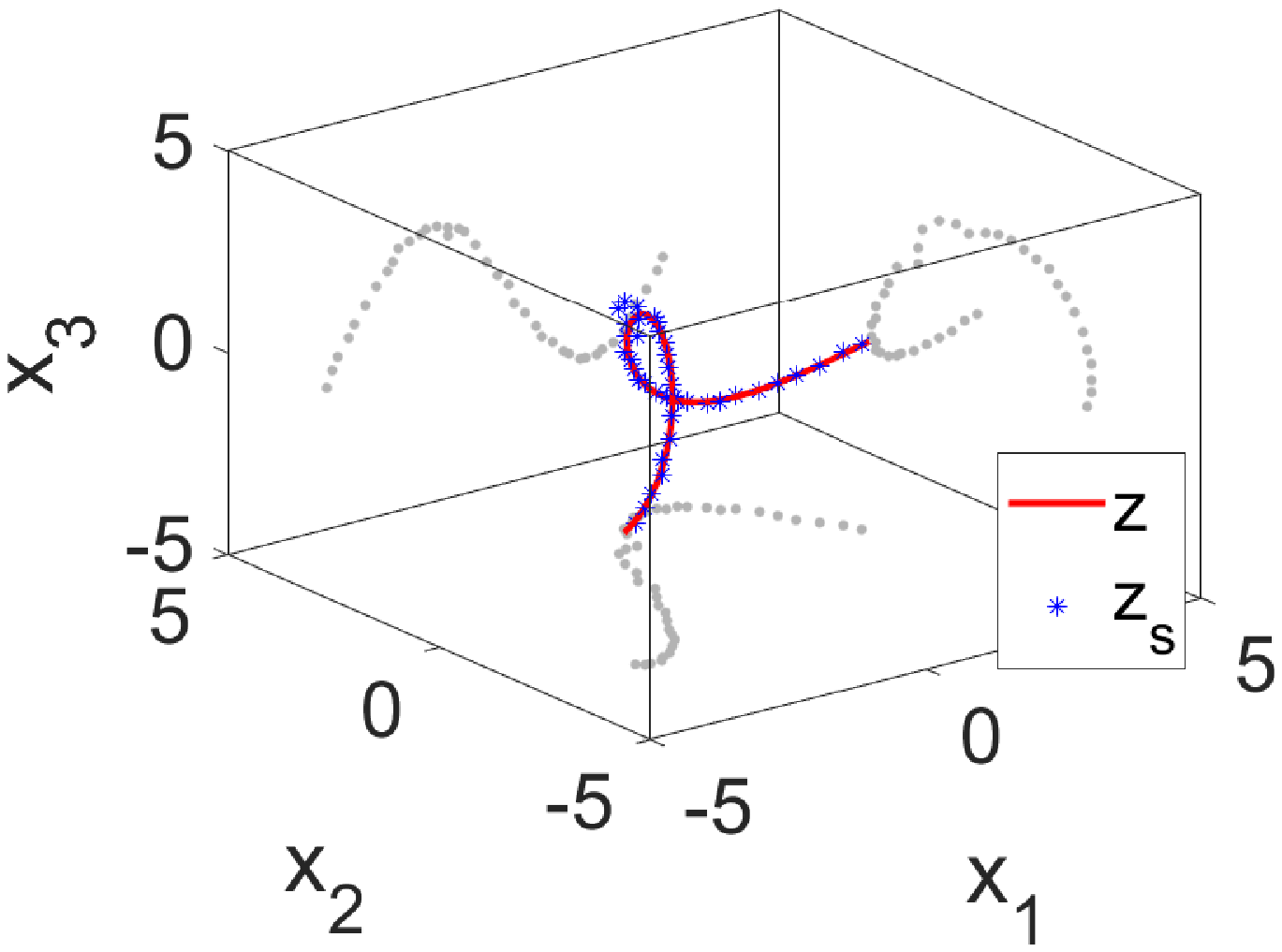}} 
\subfloat[]{\includegraphics[width = 1.55in]{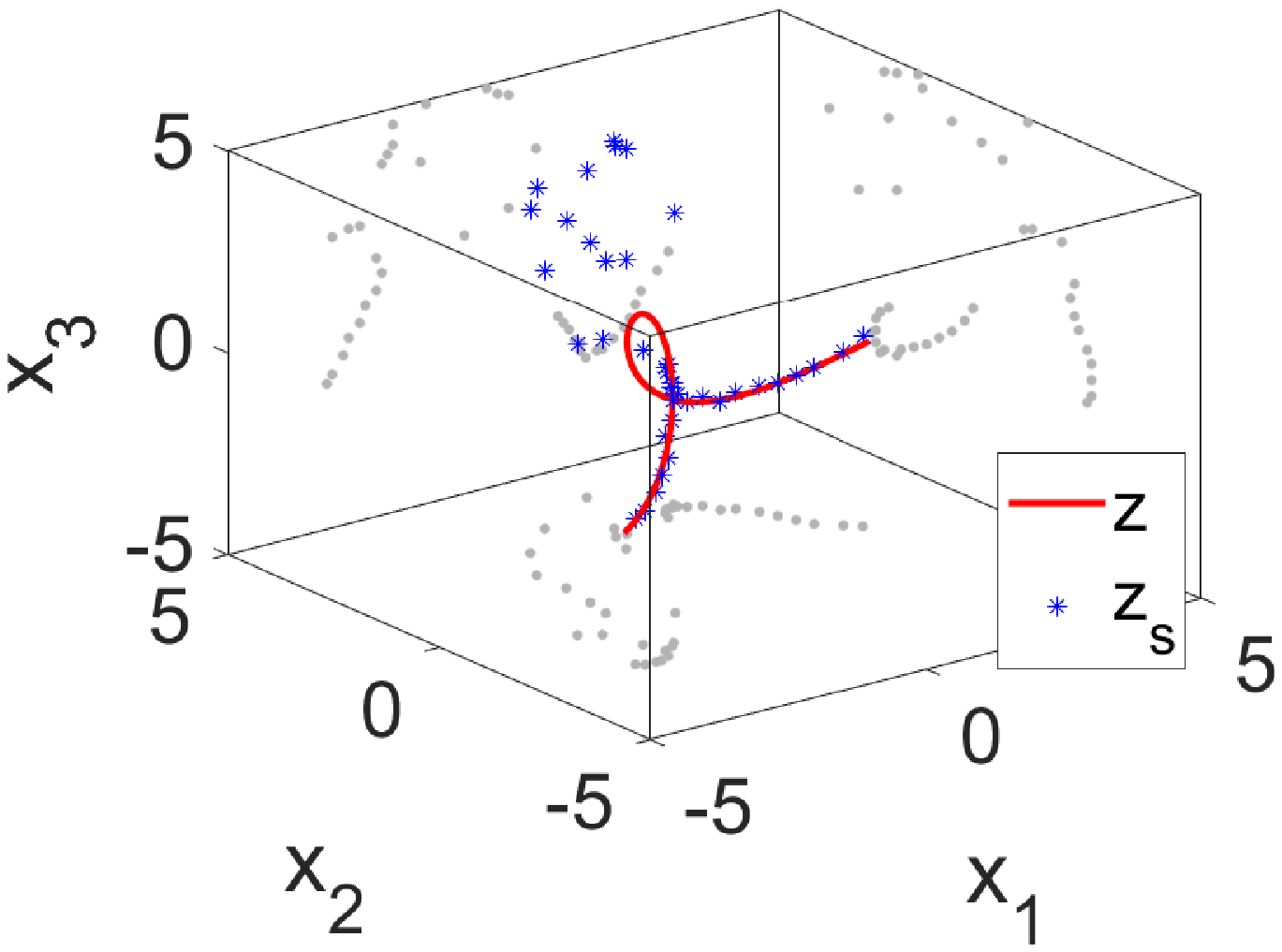}}
\subfloat[]{\includegraphics[width = 1.55in]{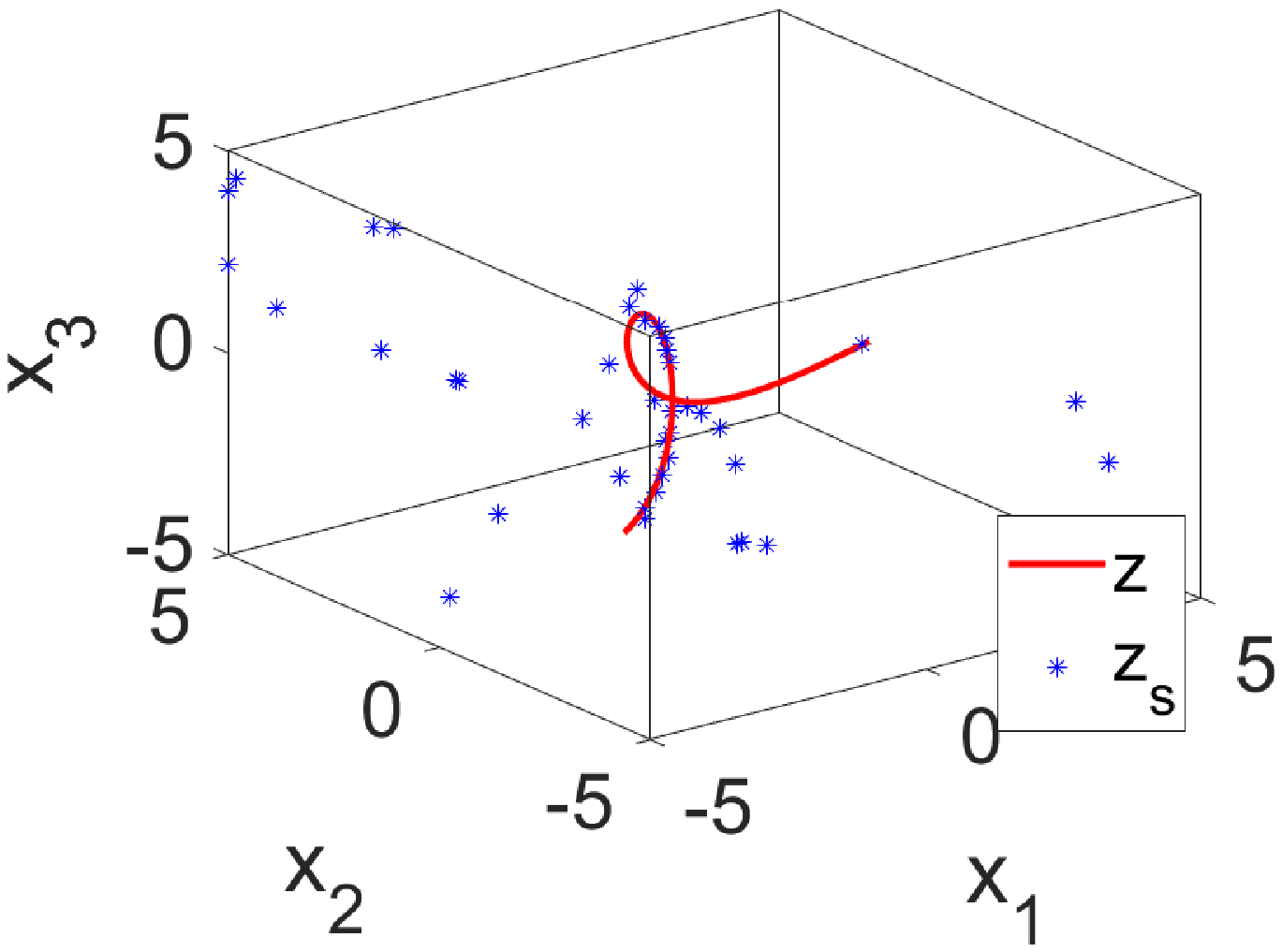}}
\subfloat[]{\includegraphics[width = 1.55in]{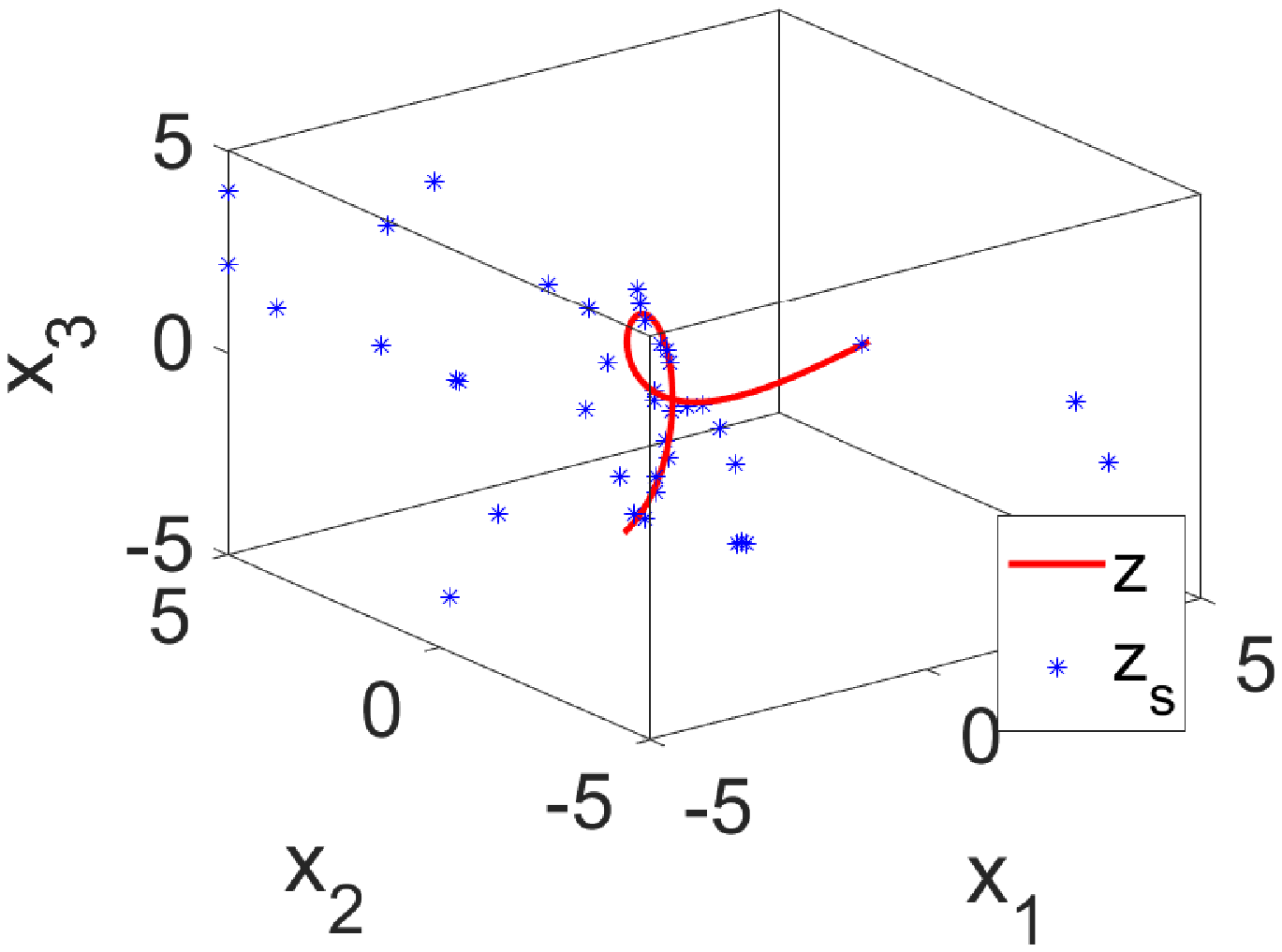}}
\\
\subfloat[]{\includegraphics[width = 1.55in]{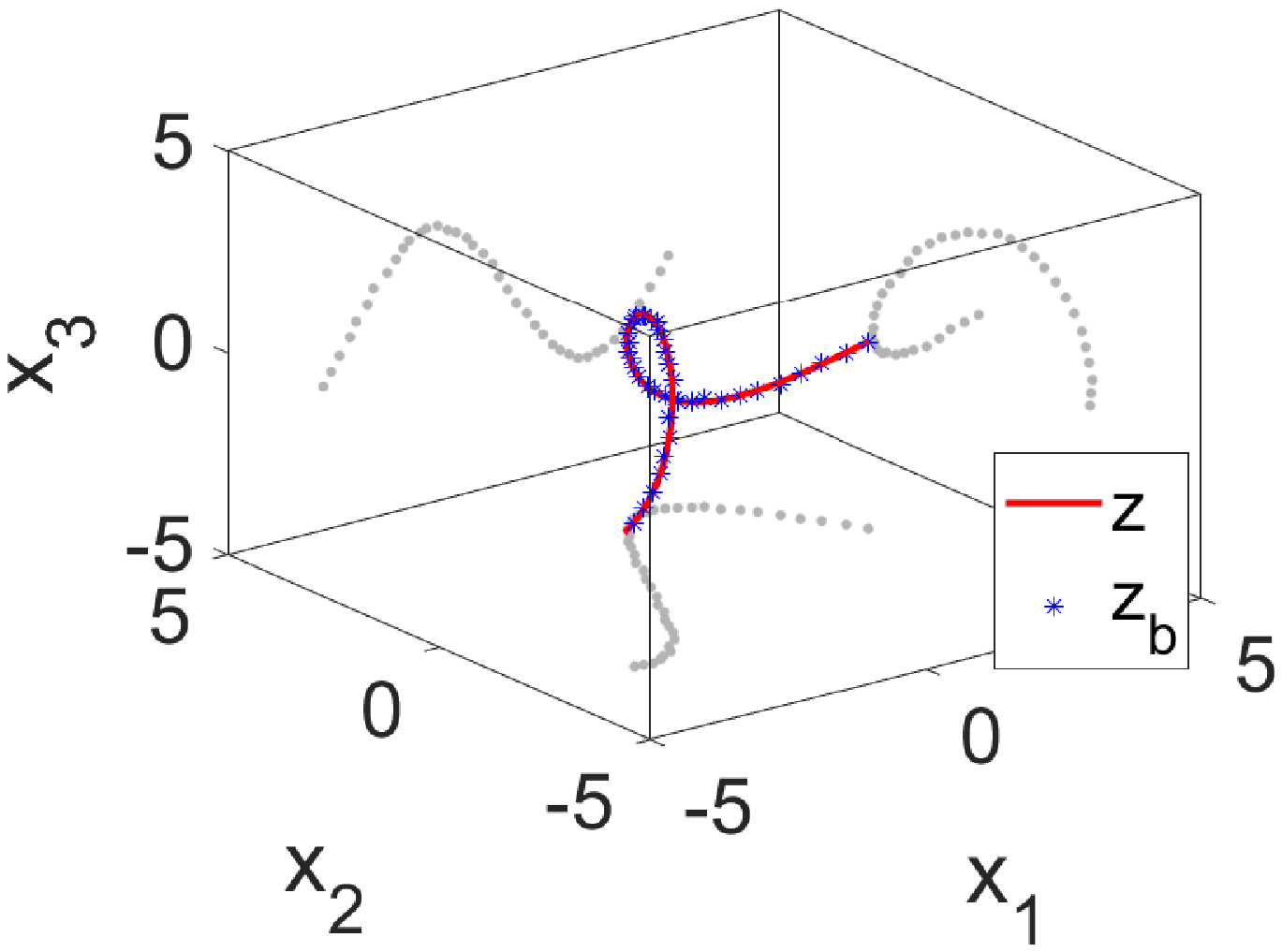}} 
\subfloat[]{\includegraphics[width = 1.55in]{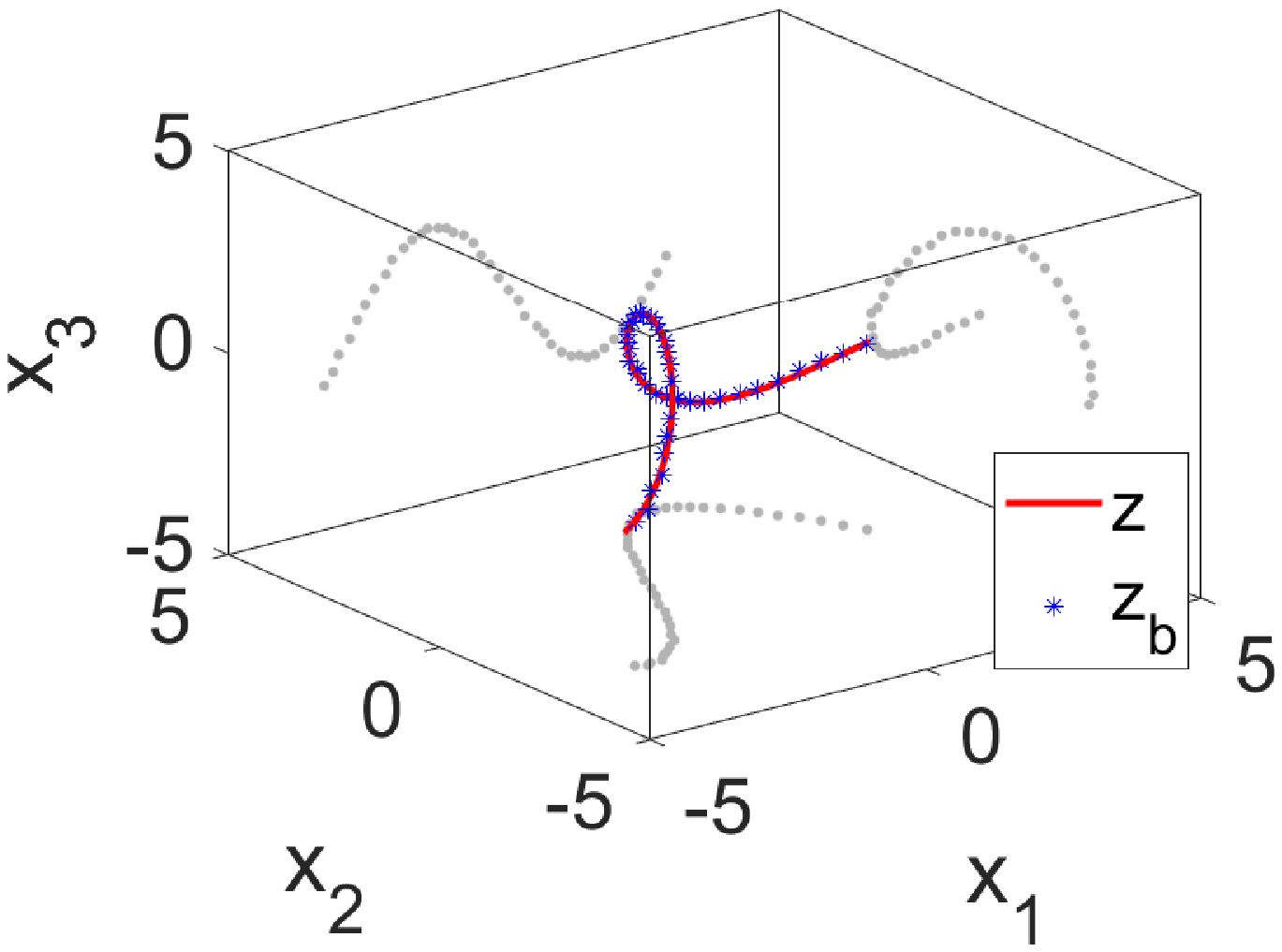}}
\subfloat[]{\includegraphics[width = 1.55in]{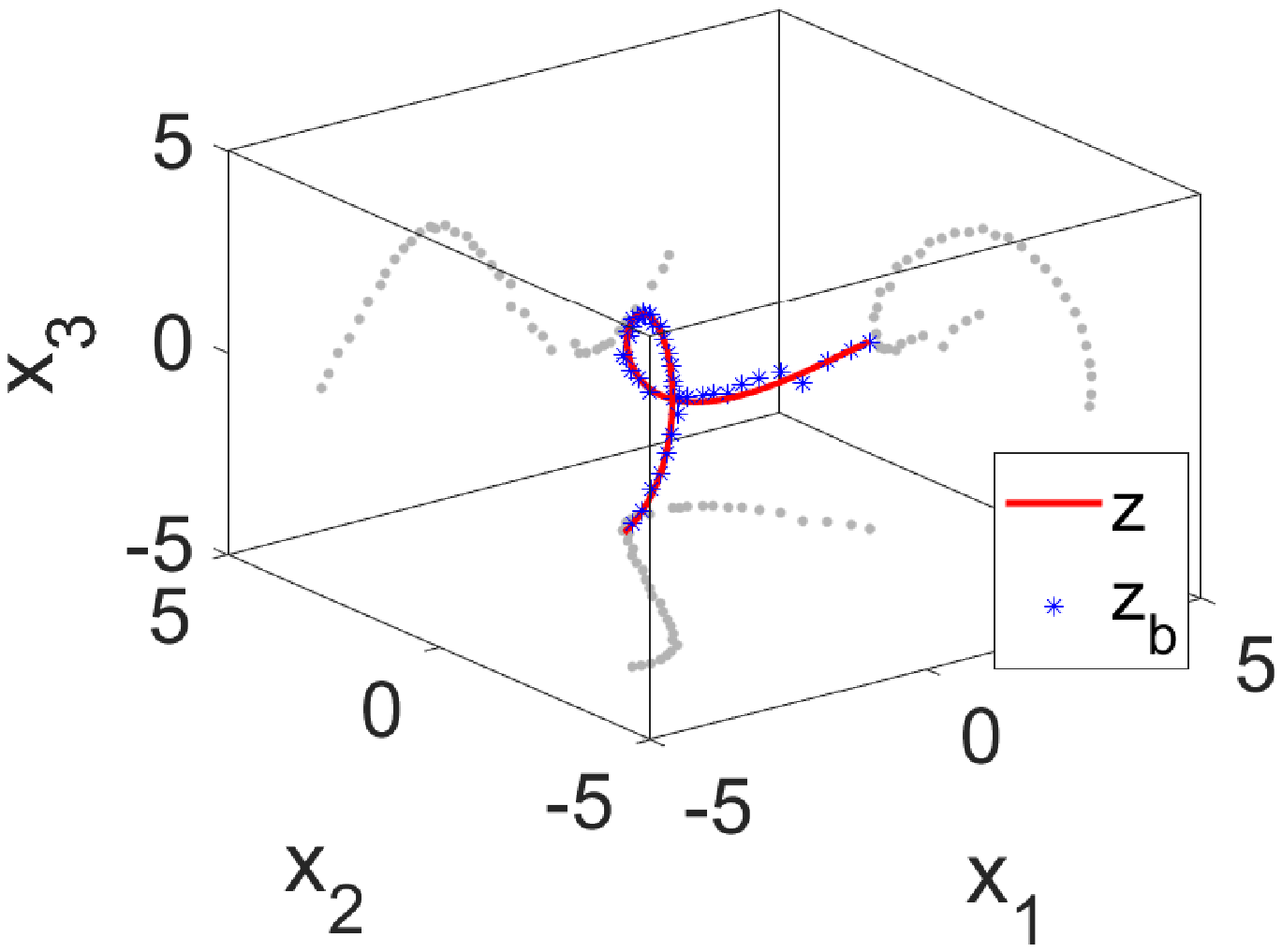}}
\subfloat[]{\includegraphics[width = 1.55in]{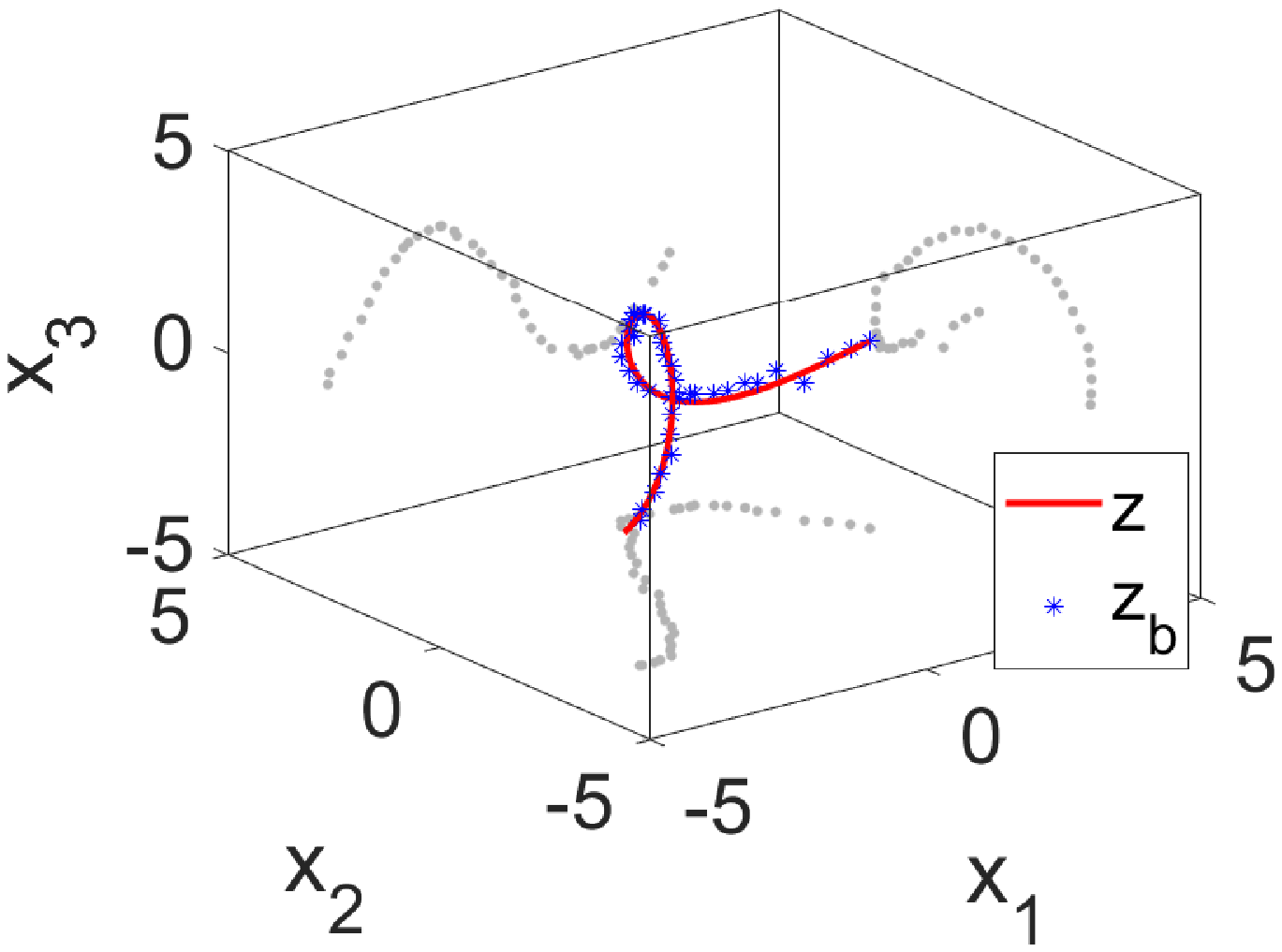}}
\caption{Reconstruction of a bow-shape path. (a)-(c) are the cross-section plots of the indicator functions $\bm{I}(y,T_1)$. (d) is the exact trajectory $z(t)$. (e)-(g) are the reconstructions by the {ADSM} using data on $S_1, S_2, S_3$ with $\varepsilon =1\%$. (h) is the reconstruction by the ADSM using data on $S_3$ with $\varepsilon =10\%$. (i)-(k) are the reconstructions by the {ADSM-MCMC} using data on $S_1, S_2, S_3$ with $\varepsilon =1\%$. (l) is the reconstruction by the ADSM-MCMC using data on $S_3$ with $\varepsilon =10\%$. $\mathsf{z}$,$\mathsf{z_{s}}$, and $\mathsf{z_{b}}$ represent the exact path, the reconstruction by the {ADSM}, and the reconstruction by the {ADSM-MCMC}, respectively.}
\label{fig: Bow}
\end{figure}

\subsection{Example 3: Simultaneous reconstruction of two paths} 

We consider the case of two moving point sources which are well-separated. Assume that the signal functions of these sources are the same and their exact moving paths are given by (Fig.~\ref{fig: Two_curves} (d))
\begin{align*}
    z_1(t)= & (2-2\cos(4-0.5t), 1+3\sin(2+t), 2),\\
    z_2(t)= & (-4, -3+1.3t, 1.5).
\end{align*}

Since \eqref{eq:wave_equation} is linear with respect to the sources, the proposed method works similar to the case of a single point source. To be precise, the source term of \eqref{eq:wave_equation} is the superposition $\lambda (t)\delta(x-z_1(t)) +\lambda (t)\delta(x-z_2(t))$. Correspondingly, the exact solution to \eqref{wave equation} is now given by 
\[
u(x, t; z(t)) = u_1(x, t; z_1(t)) +u_2(x, t; z_2(t)),
\]
where $u_1$ and $u_2$ are given by \eqref{exact solution 1} with $z(t)$ replaced by $z_1(t)$ and $z_2(t)$, respectively.

The covariance is chosen to be $\Sigma_{qq}=0.4I_{3\times 3}$. The {ADSM} reconstructs two local maximums in Fig.~\ref{fig: Two_curves} (a)-(c), indicating the presence of two point sources. Similar to the previous two examples, when the measured data are complete or almost complete, the {ADSM} can provide satisfactory reconstructions. The performance is getting worse as the data become less (Fig.~\ref{fig: Two_curves}(f)-(h)). The {ADSM-MCMC} improves the reconstructions significantly (Fig.~\ref{fig: Two_curves} (j)-(l)). 

\begin{figure}[ht]
\subfloat[]{\includegraphics[width = 1.55in]{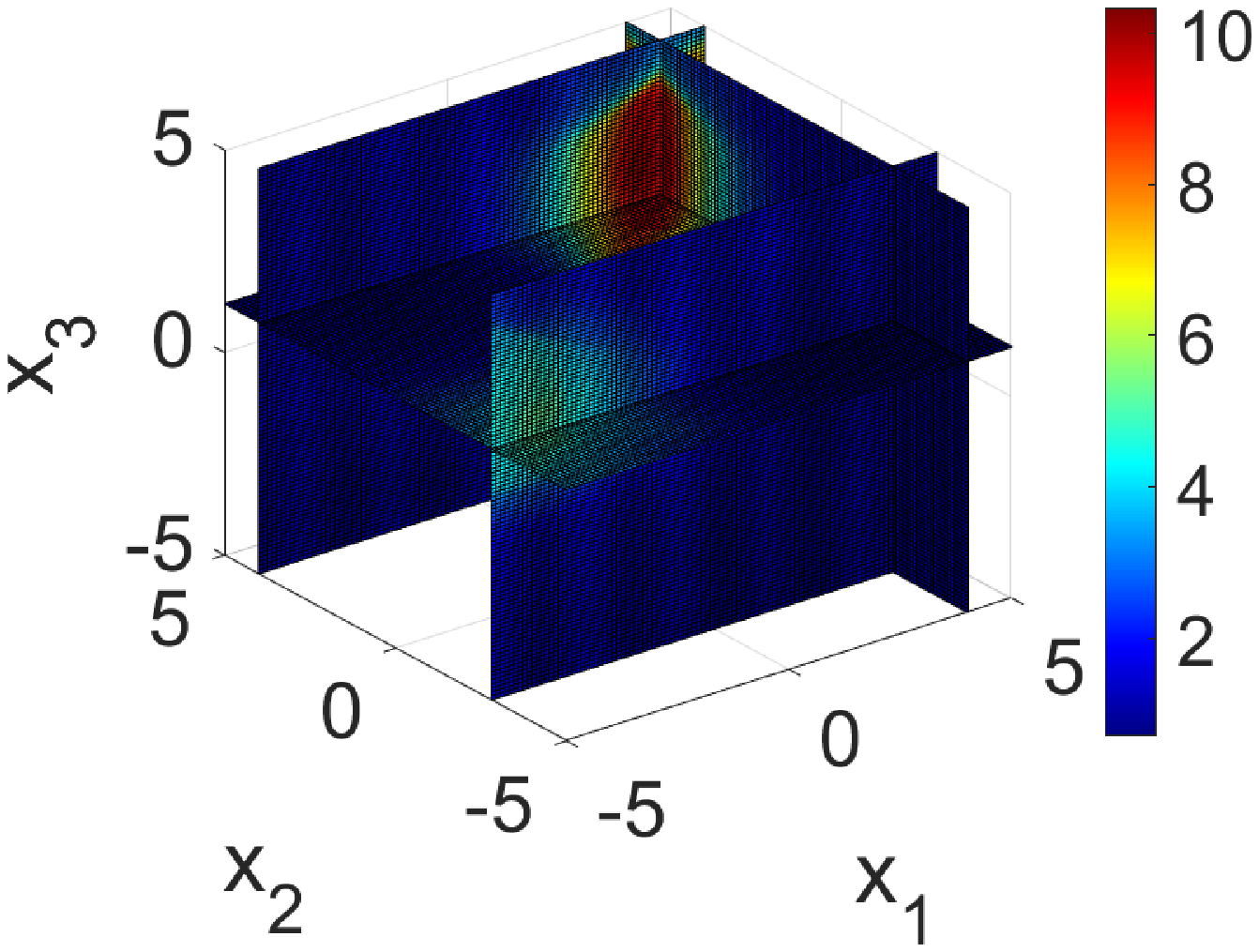}}
\subfloat[]{\includegraphics[width = 1.55in]{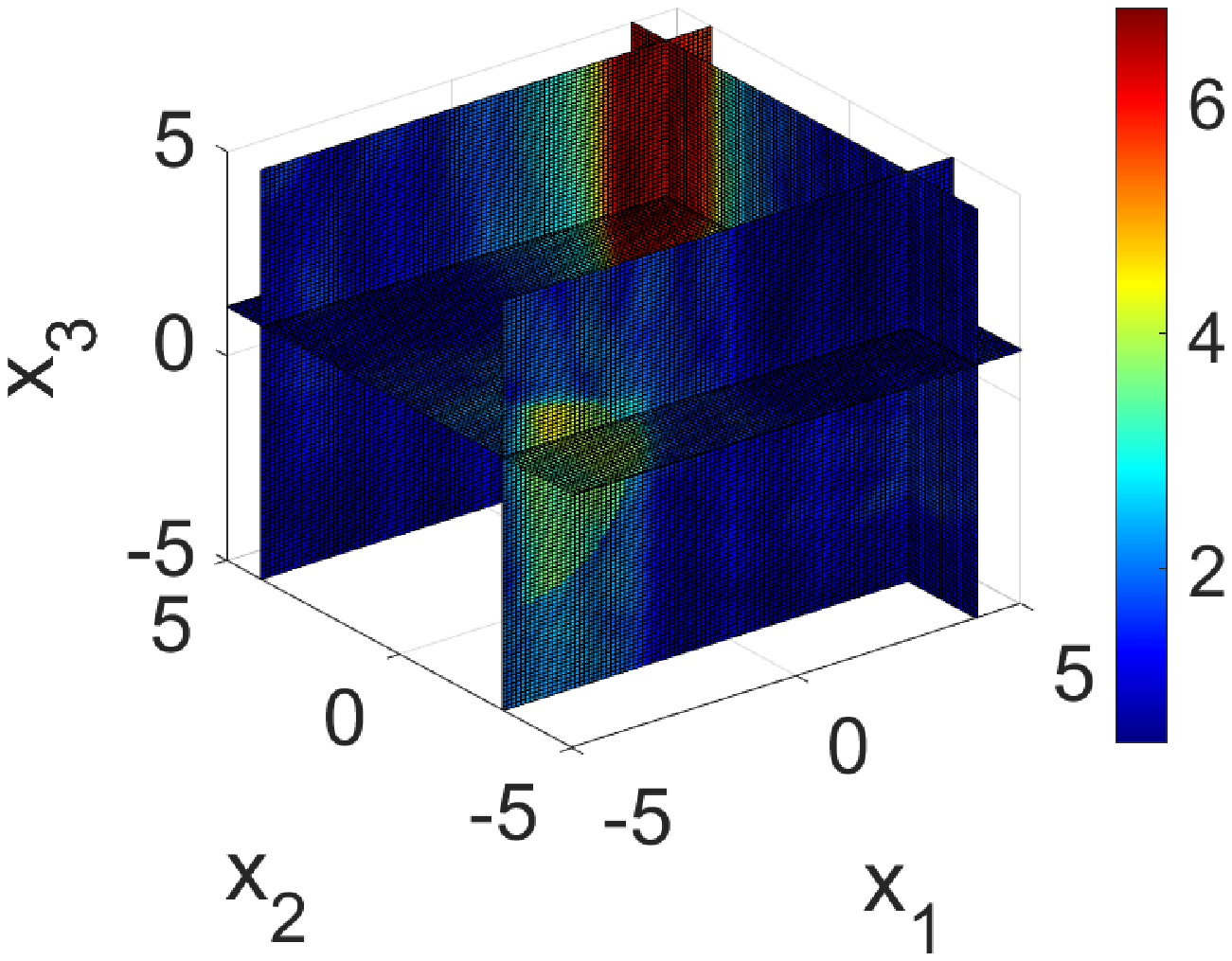}}
\subfloat[]{\includegraphics[width = 1.55in]{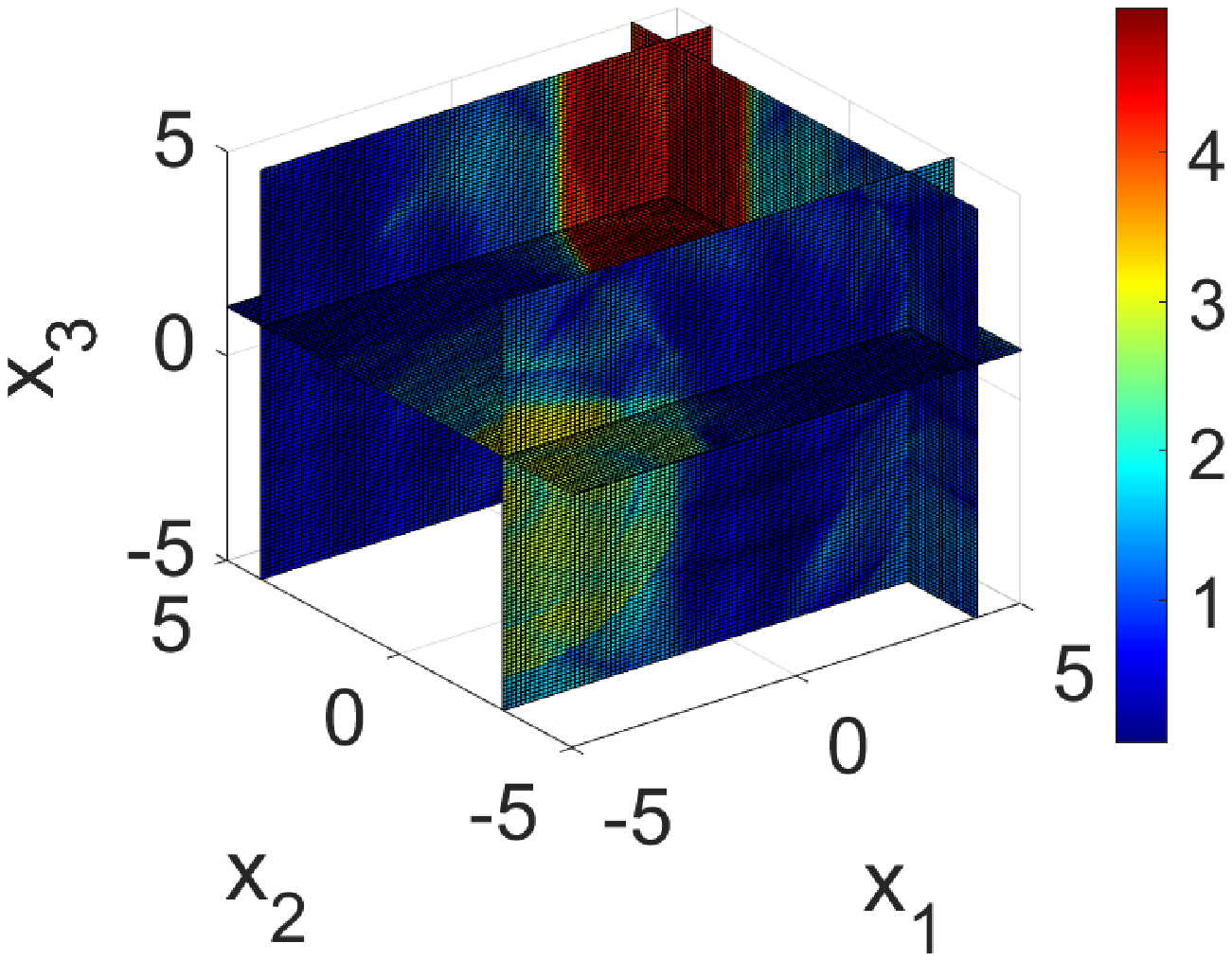}}
\subfloat[]{\includegraphics[width = 1.55in]{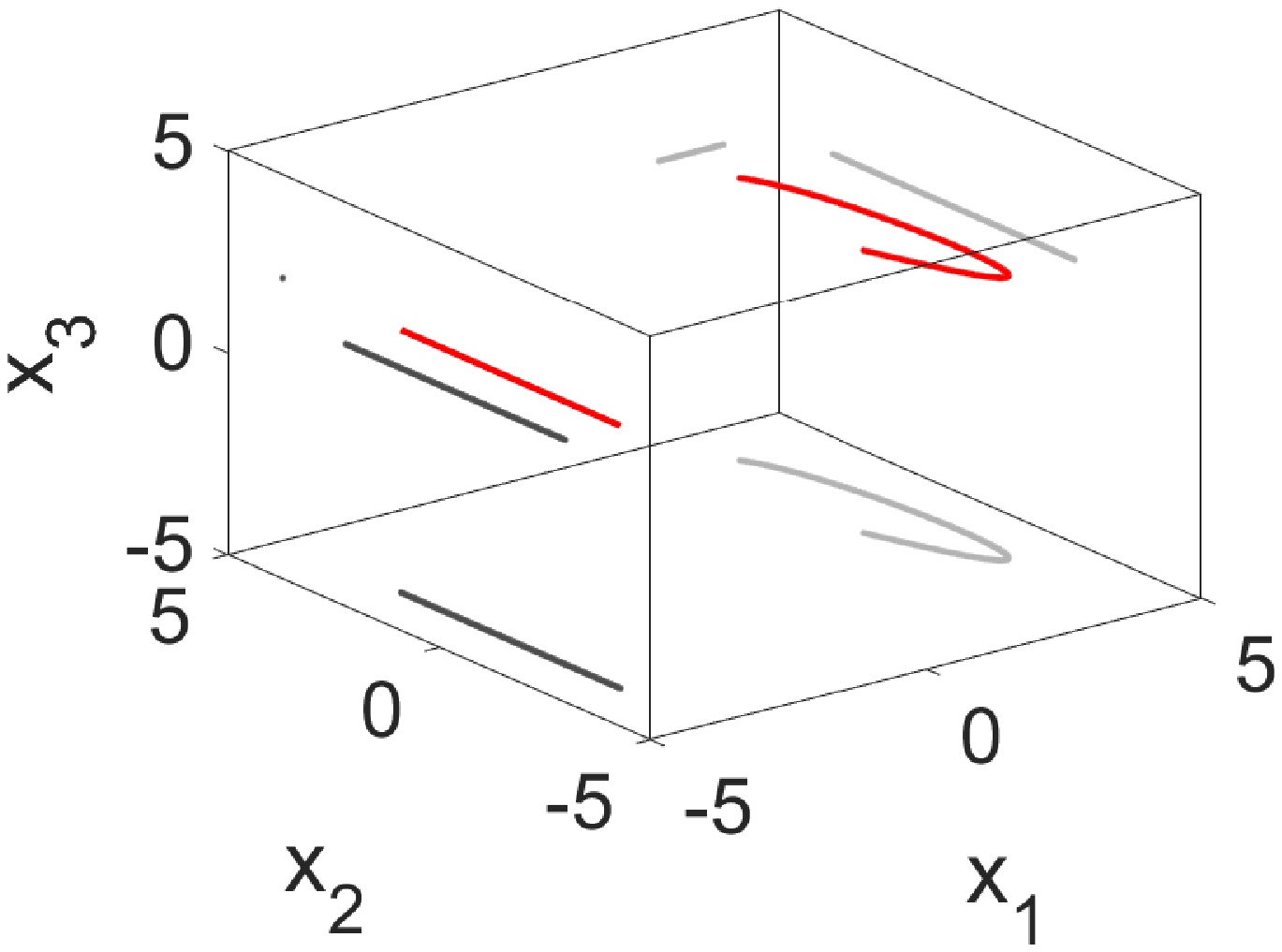}} \\
\subfloat[]{\includegraphics[width = 1.55in]{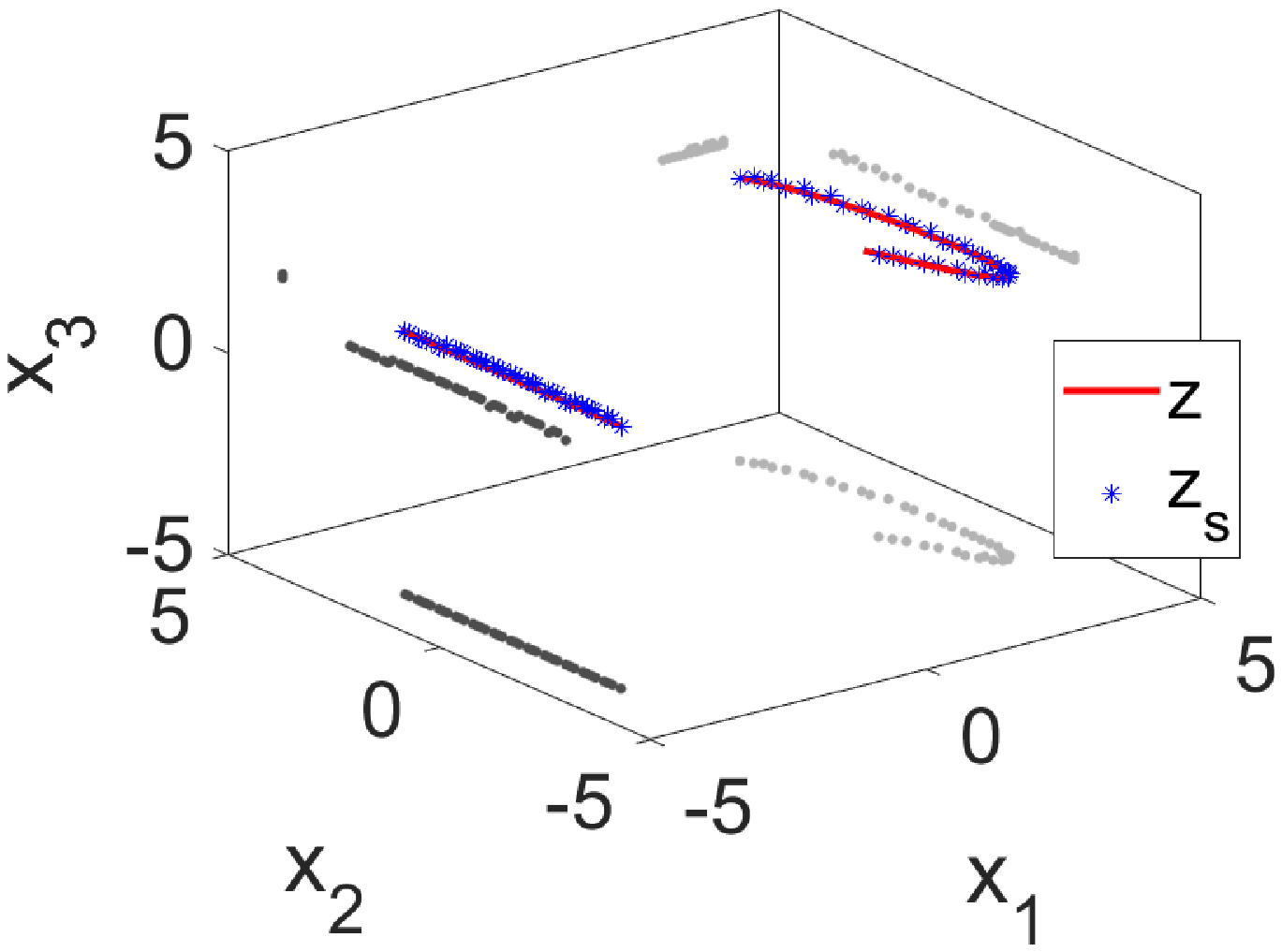}}
\subfloat[]{\includegraphics[width = 1.55in]{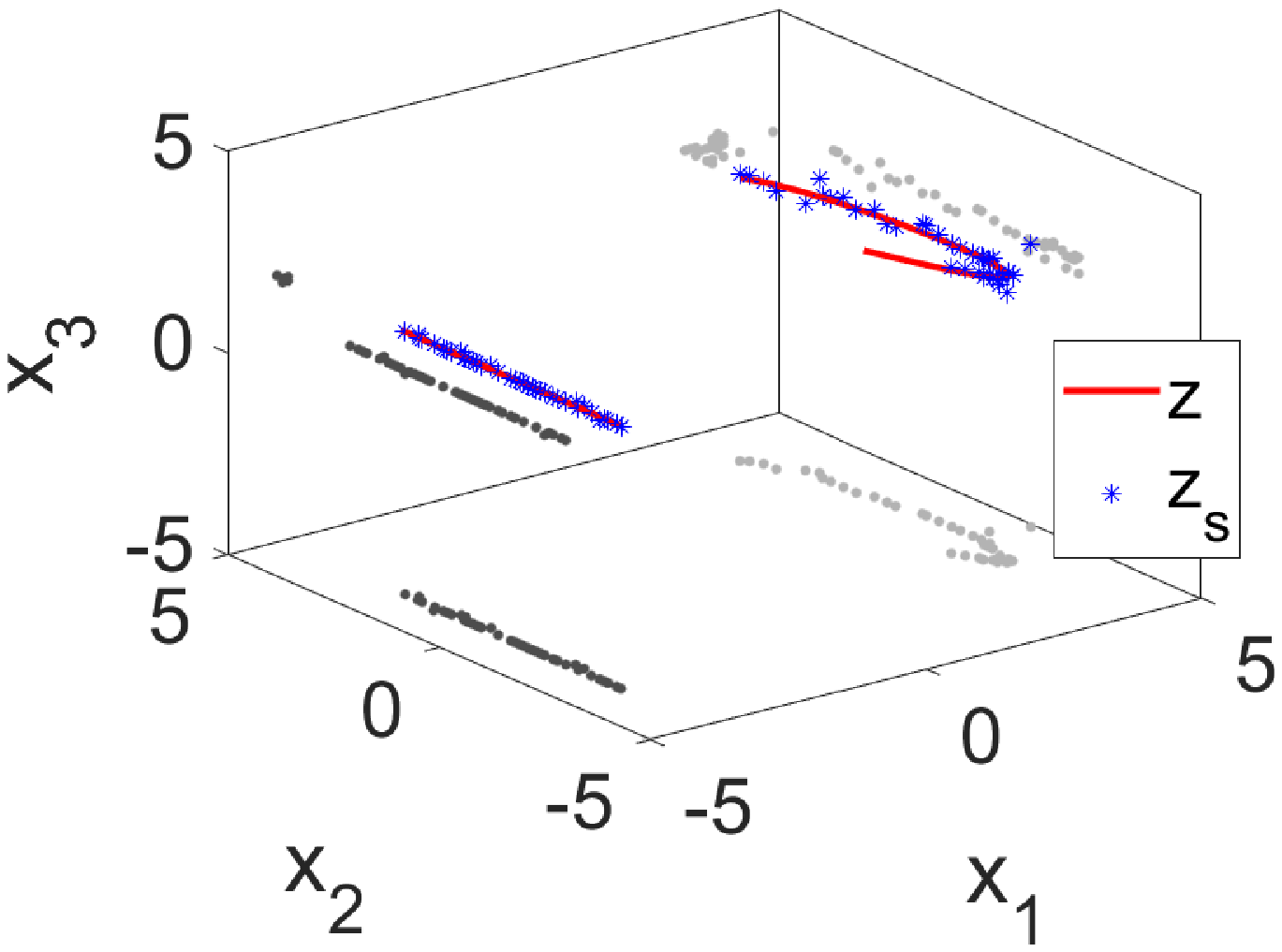}}
\subfloat[]{\includegraphics[width = 1.55in]{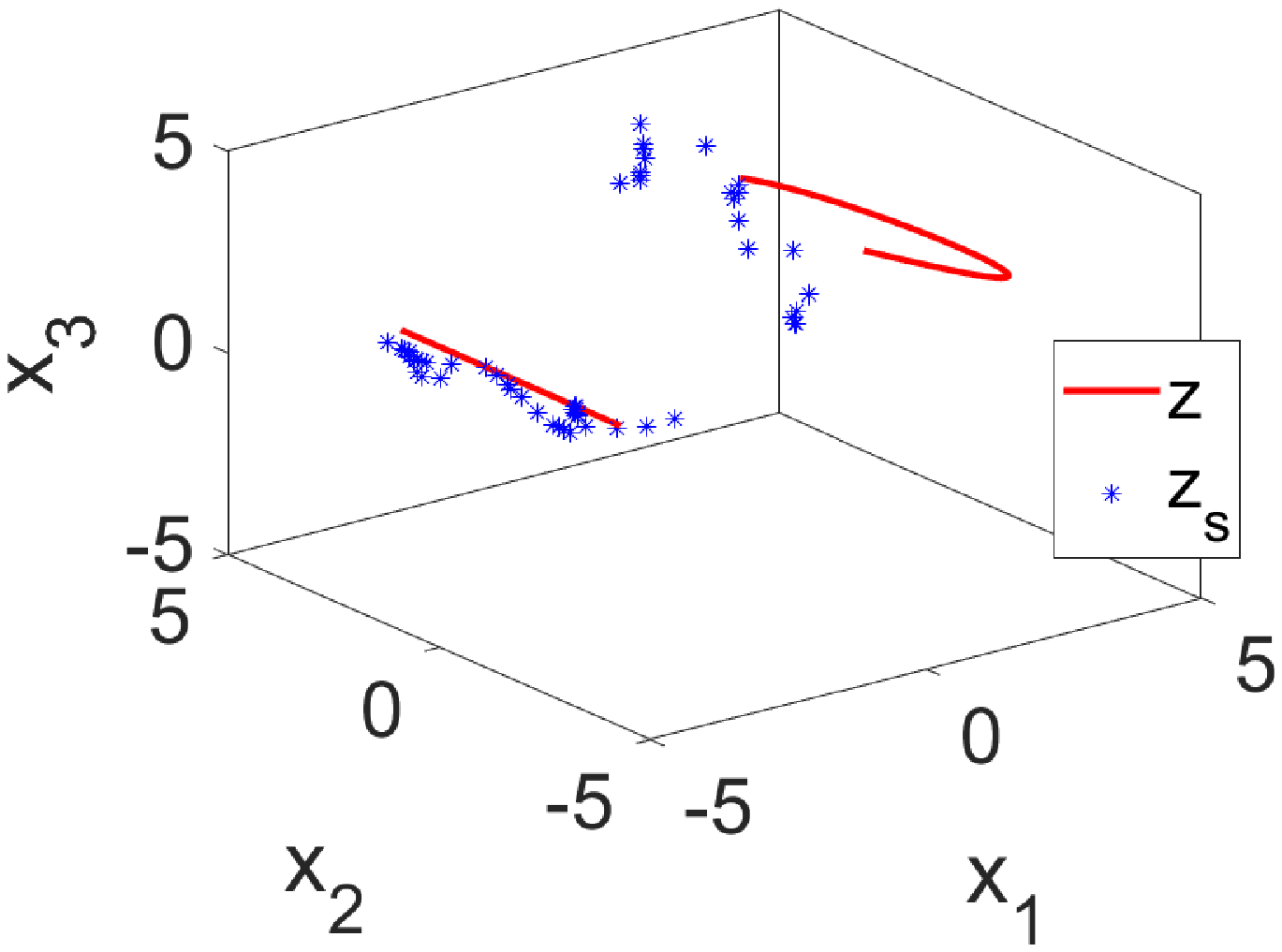}}
\subfloat[]{\includegraphics[width = 1.55in]{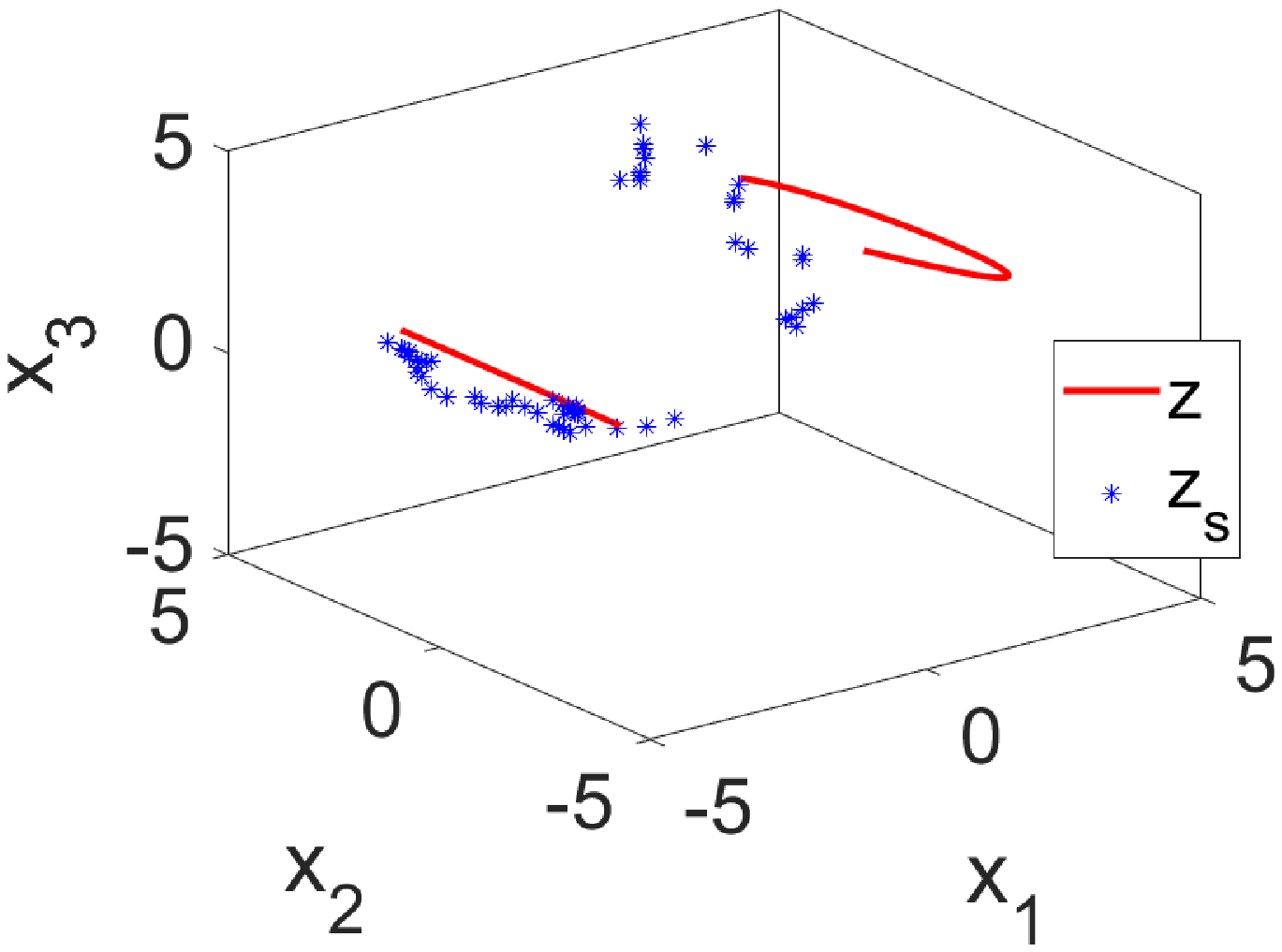}}
\\
\subfloat[]{\includegraphics[width = 1.55in]{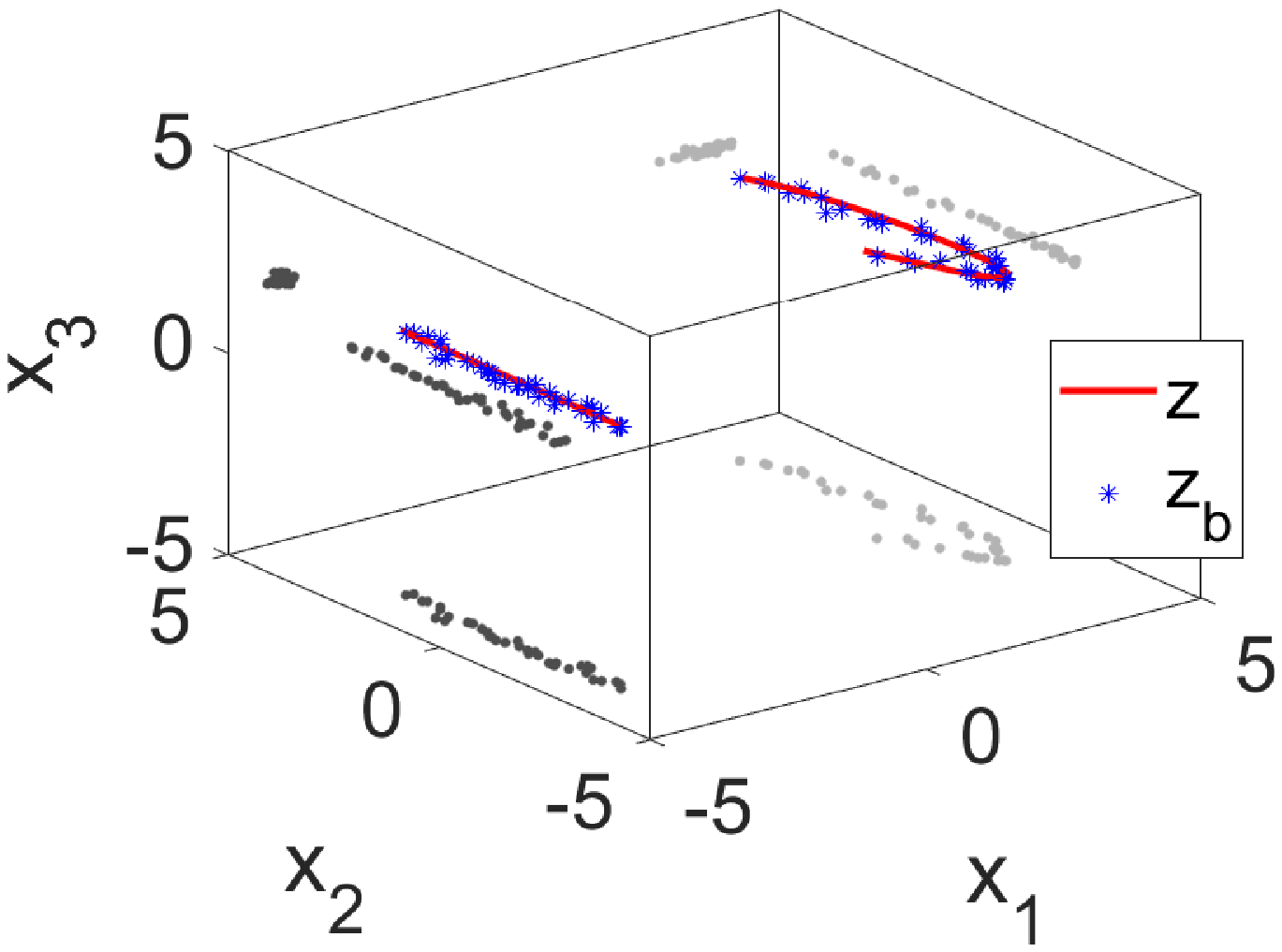}}
\subfloat[]{\includegraphics[width = 1.55in]{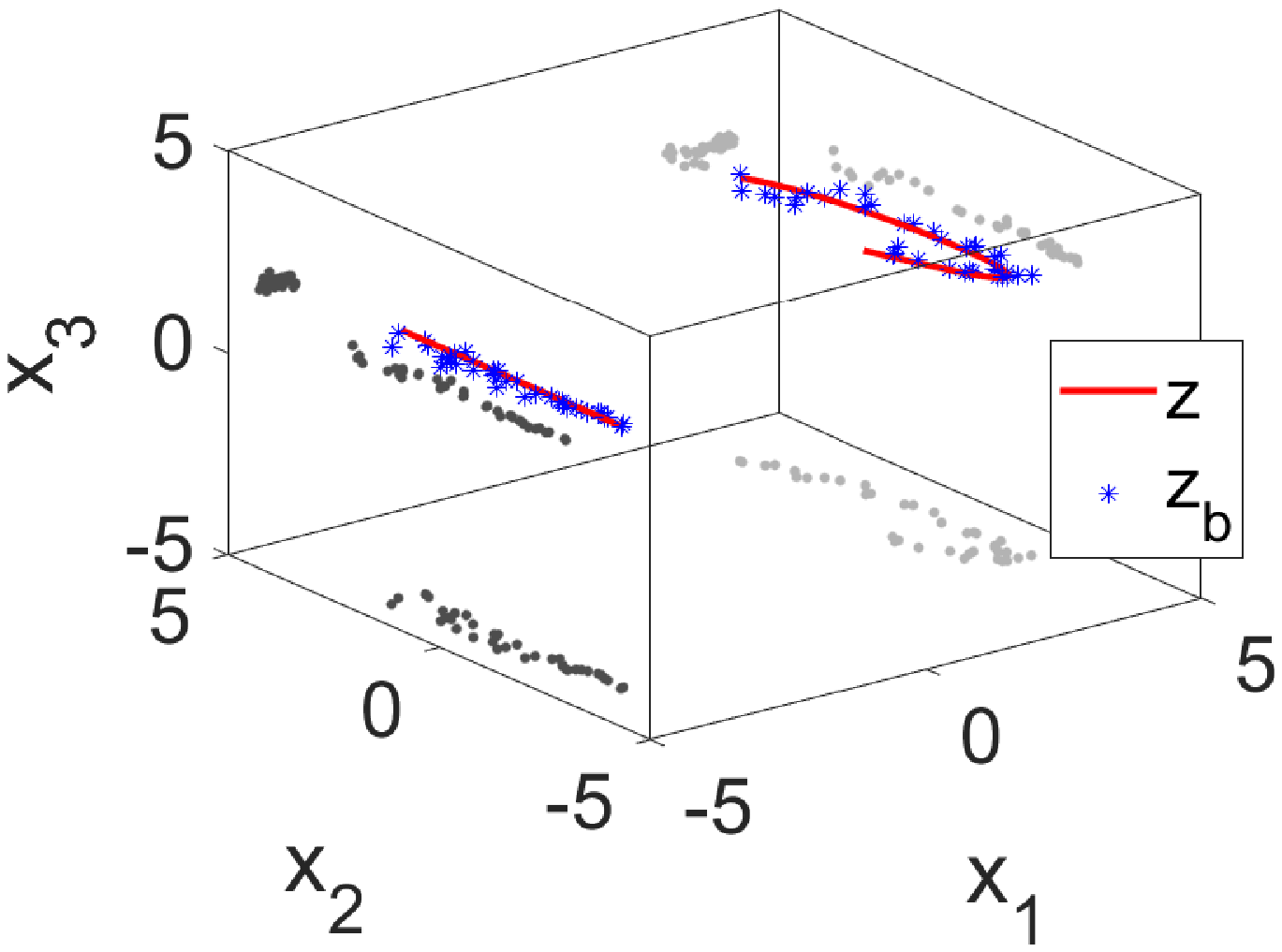}}
\subfloat[]{\includegraphics[width = 1.55in]{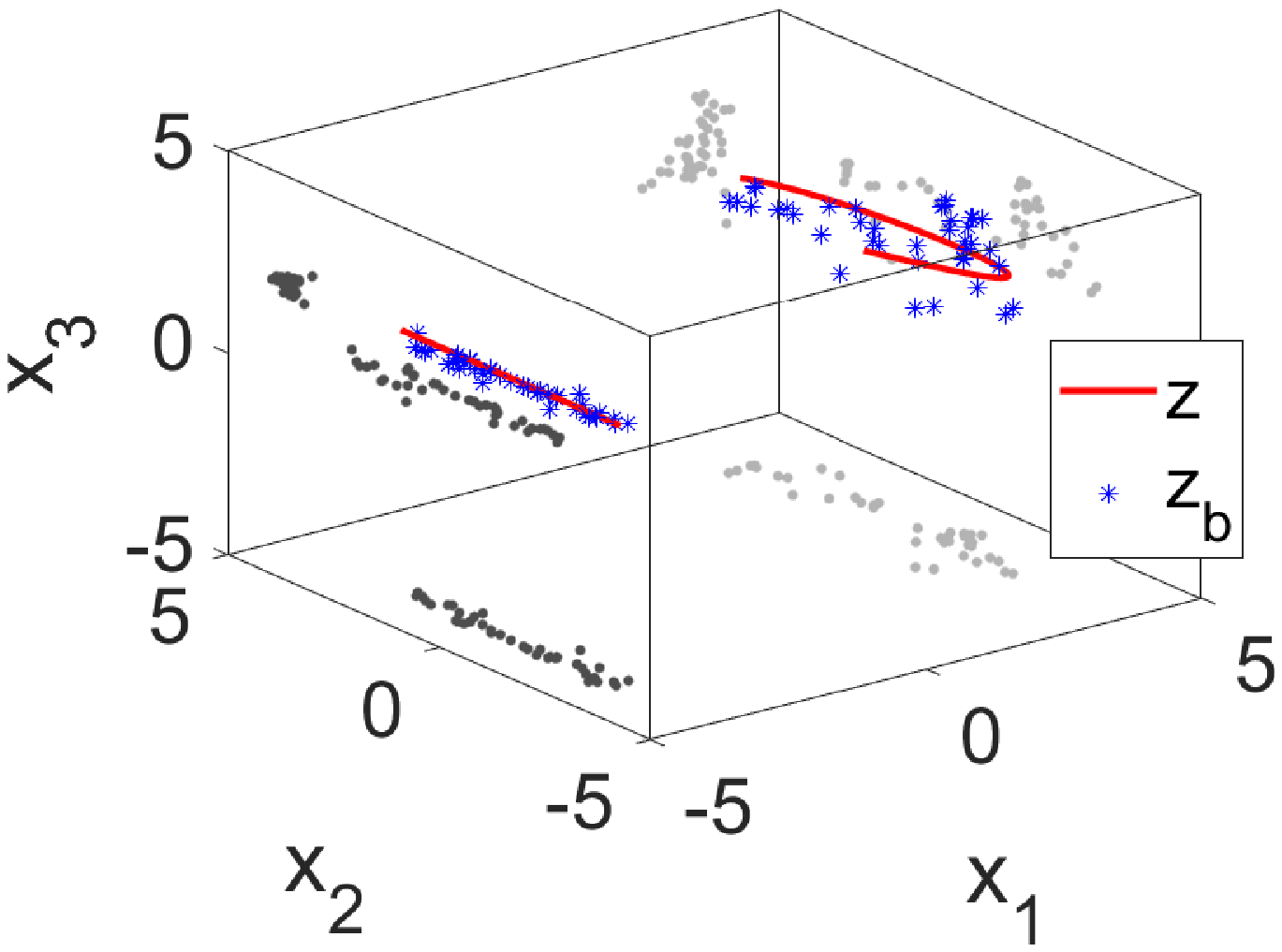}}
\subfloat[]{\includegraphics[width = 1.55in]{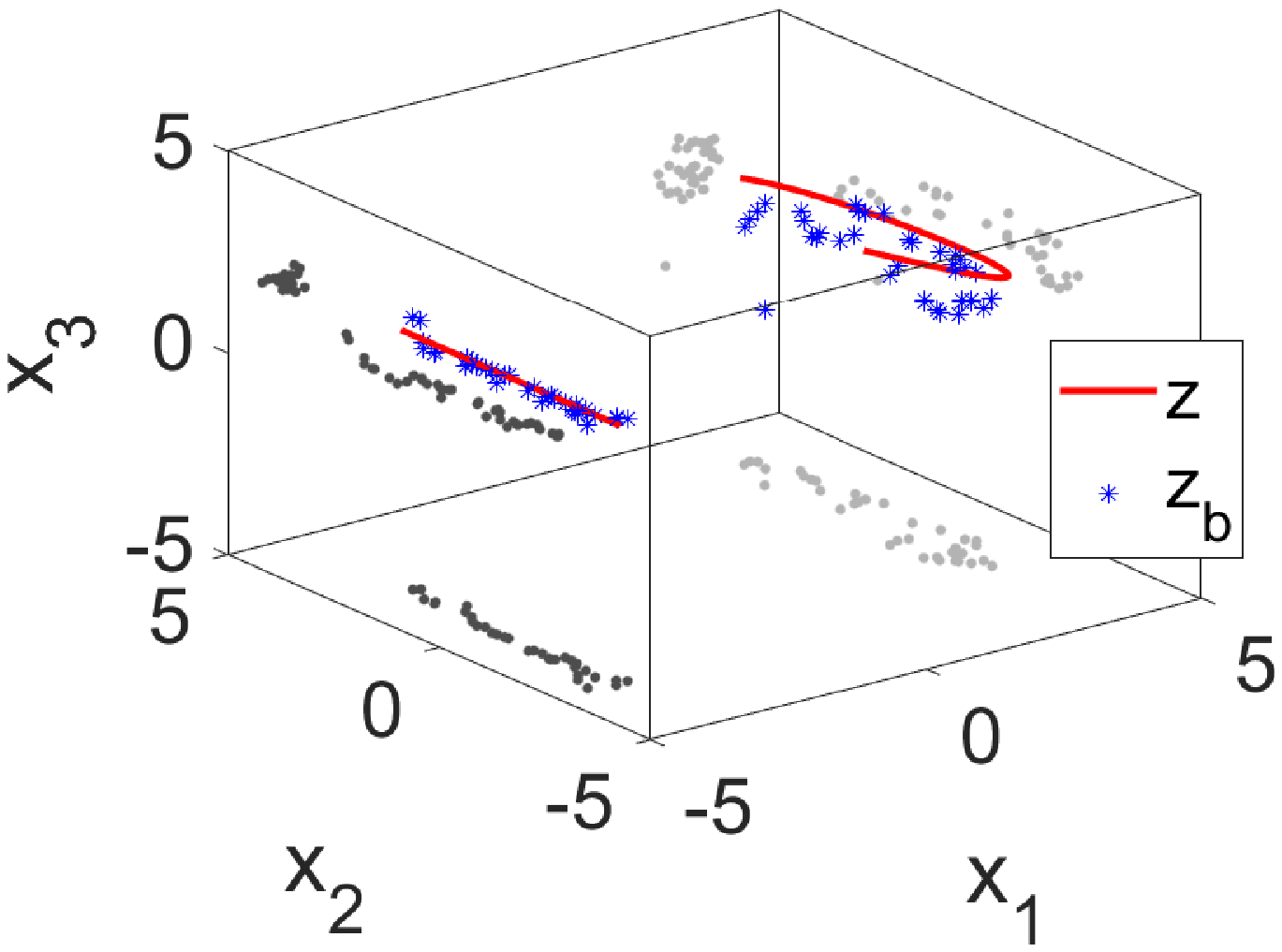}}
\\
\caption{Reconstruction of two paths. (a)-(c) are the cross-section plots of the indicator functions $\bm{I}(y,T_1)$. (d) is the exact trajectory $z(t)$. (e)-(g) are the reconstructions by the {ADSM} using data on $S_1, S_2, S_3$ with $\varepsilon =1\%$. (h) is the reconstruction by the ADSM using data on $S_3$ with $\varepsilon =10\%$. (i)-(k) are the reconstructions by the {ADSM-MCMC} using data on $S_1, S_2, S_3$ with $\varepsilon =1\%$. (l) is the reconstruction by the ADSM-MCMC using data on $S_3$ with $\varepsilon =10\%$. $\mathsf{z}$,$\mathsf{z_{s}}$, and $\mathsf{z_{b}}$ represent the exact paths, the reconstruction by the {ADSM}, and the reconstruction by the {ADSM-MCMC}, respectively.}
\label{fig: Two_curves}
\end{figure}

\section{Conclusions} \label{sec:conclusion}

A deterministic-statistical approach is proposed to reconstruct the moving sources using partial data. The approach contains two steps. The first step is a deterministic method to obtain some qualitative information of the unknowns. The second step is the Bayesian inversion with the prior containing the information obtained in the first step. Both steps are based on the same physical model and use the same set of measured data. Numerical results show that it is a promising technique for tracking the moving point sources using partial data.

The reconstruction by the ADSM deteriorates significantly as the data become fewer. However, it still contains useful information of the sources. Coded in the prior, such information is critical for the success of the MCMC. One can use the uniform prior and perform the Bayesian inverse directly.
In Fig.~\ref{fig:uniform_p}, we show the reconstructed path of the moving source for $S_3$ and $\varepsilon = 1\%$ with $5000$ samples which follow the uniform distribution $\mathcal{U}[-5, 5]$ . It can be seen that the reconstructions in in Fig.~\ref{fig:uniform_p} (c-d) are much worse than Fig.~\ref{fig: C} (i) and (k).

% In many practical applications, due to various restrictions, the measured data are partial and noisy. The related inverse problems are very challenging as pointed out in \cite{Baum1999}: ``We detect everything, but identify nothing''. The combined deterministic-statistical approach is promising to address such problem to some extend.

\begin{figure}[ht]
    \subfloat[]{\includegraphics[width = 1.55in]{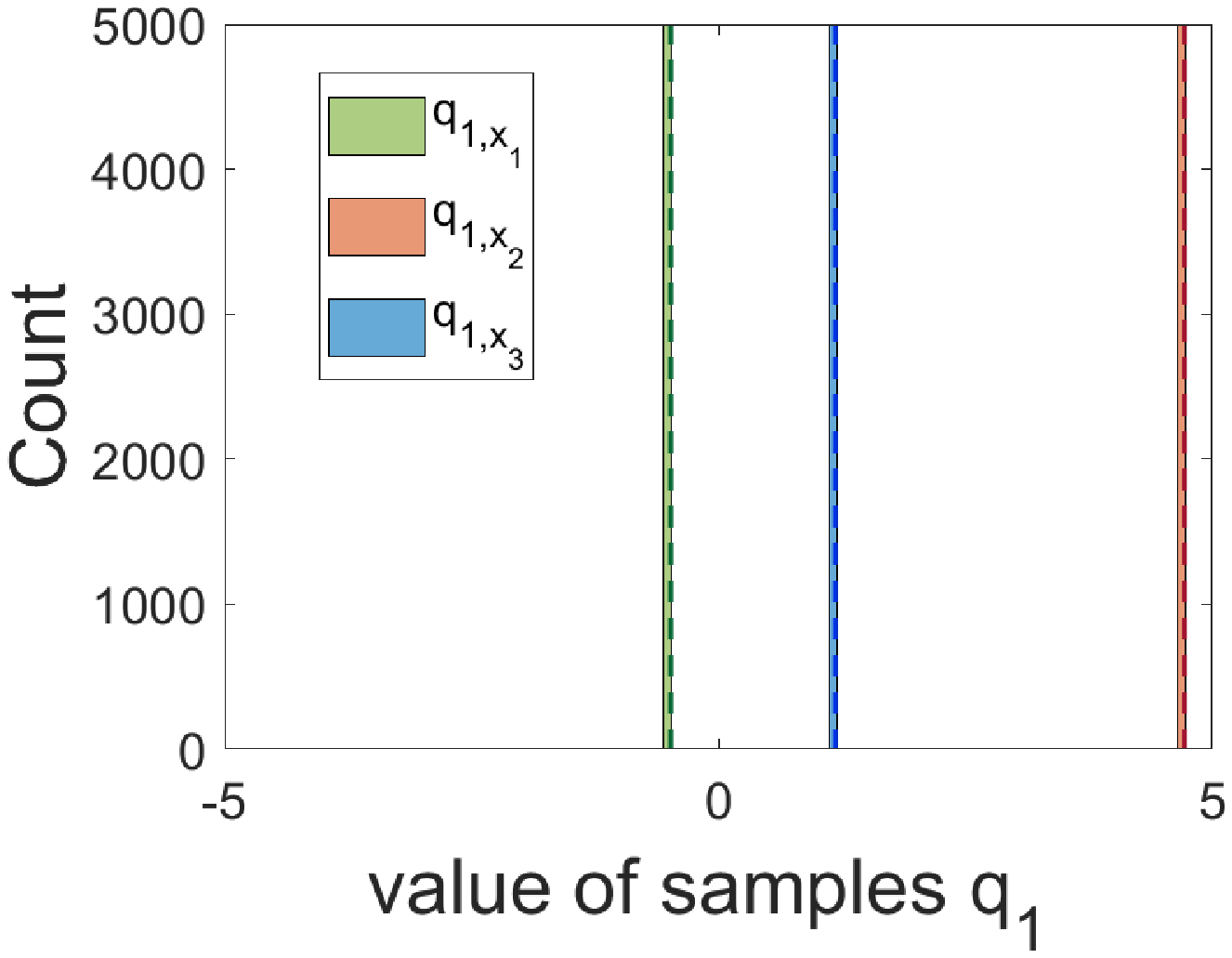}}
\subfloat[]{\includegraphics[width = 1.55in]{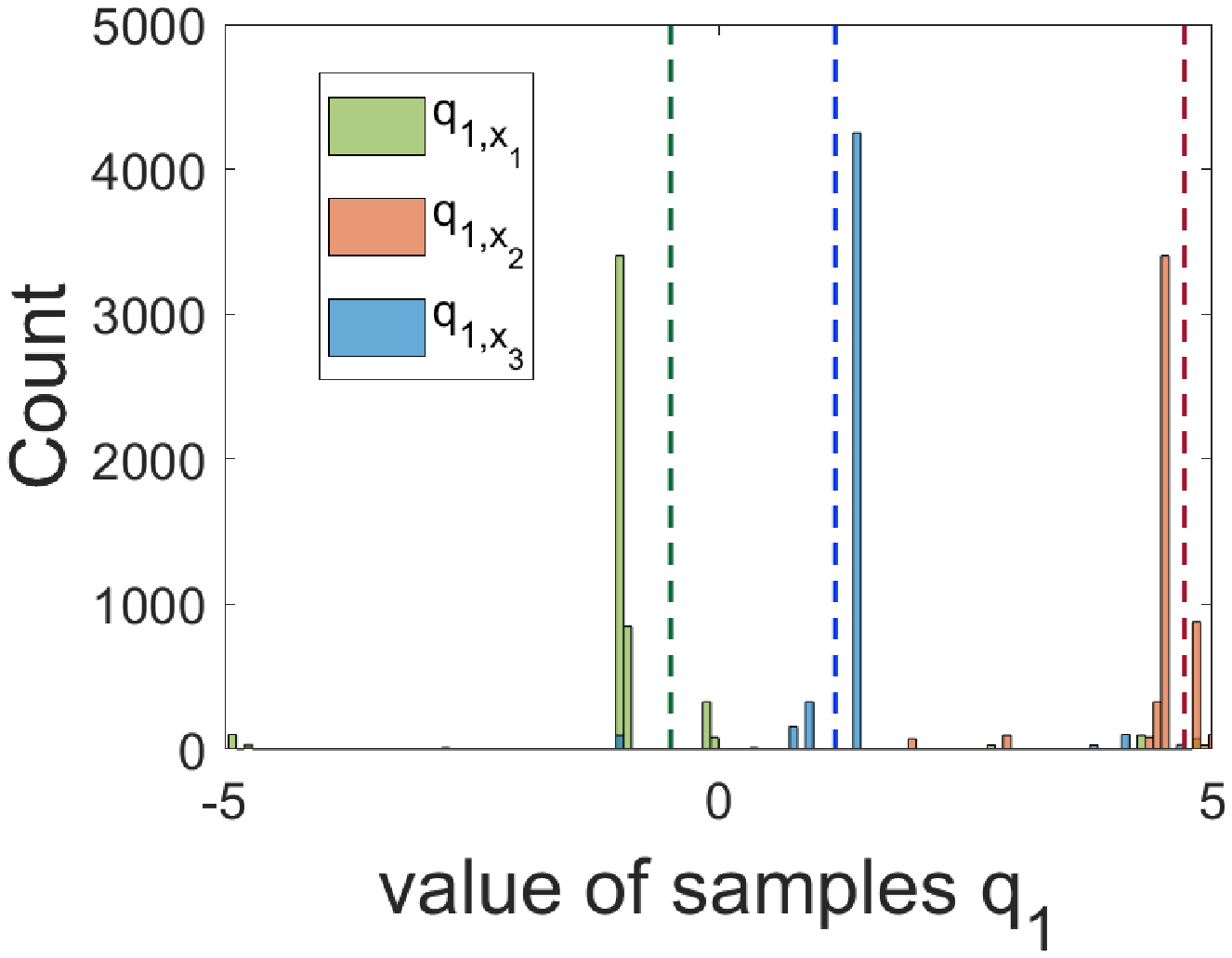}}
\subfloat[]{\includegraphics[width = 1.55in]{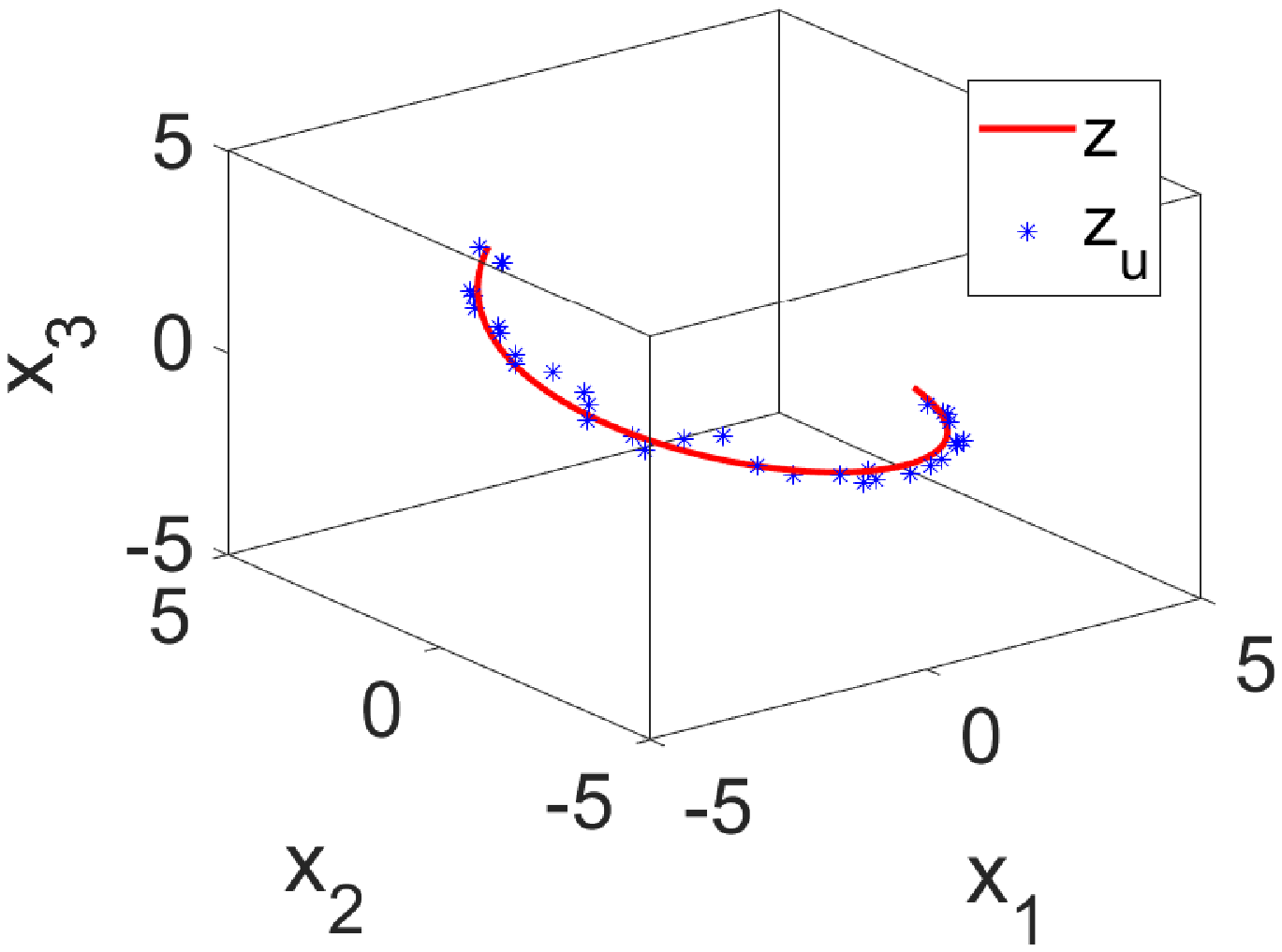}}
\subfloat[]{\includegraphics[width = 1.55in]{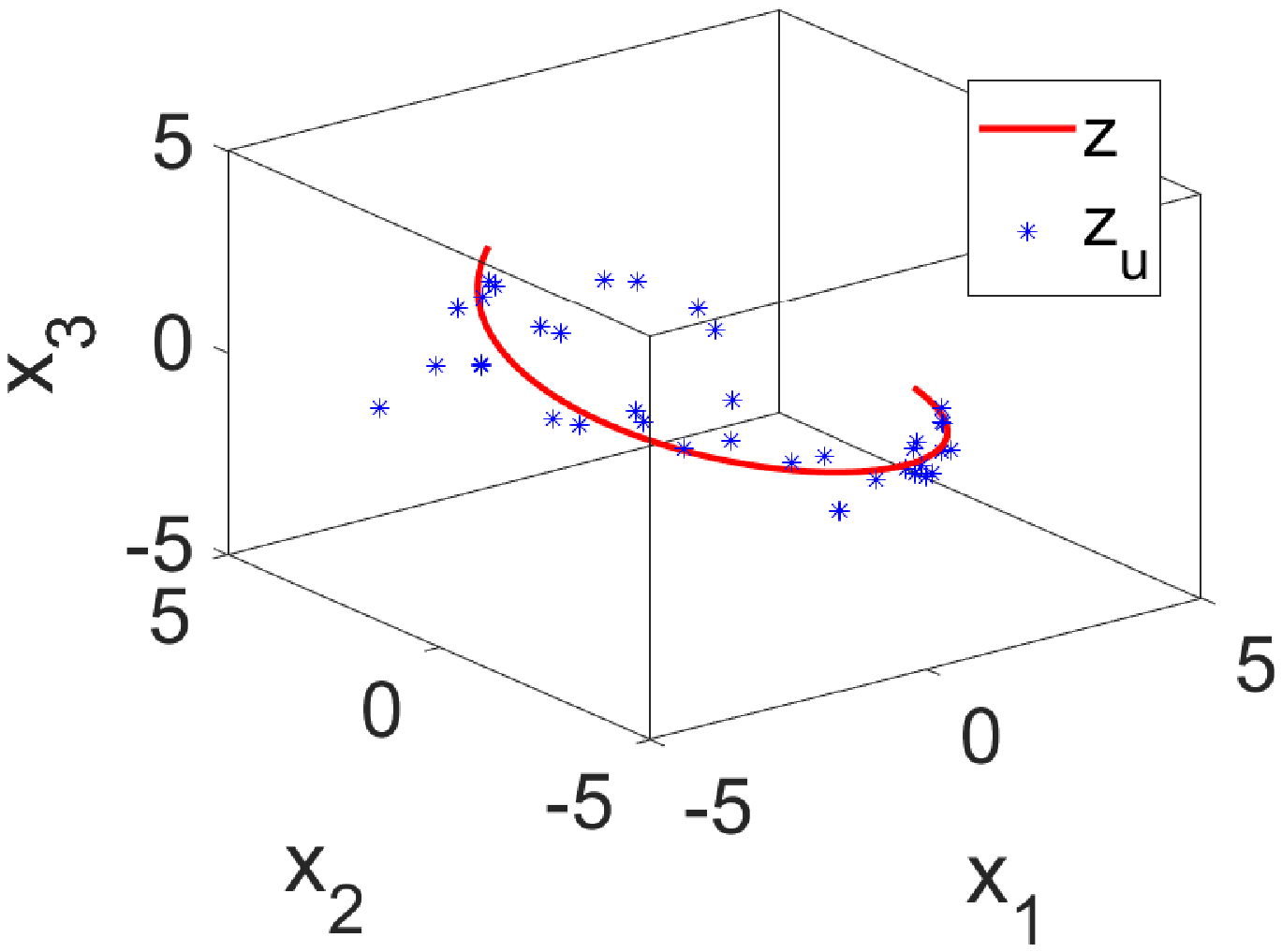}} 
    \caption{Reconstruction of the C-shape path: (a-b) histograms of the samples $\{q_{1}^{(k)}\}_{k=1}^{K}$ with normal and uniform prior for $S_{1}$, respectively; (c-d) reconstructions by MCMC with uniform prior for $S_{1}, S_{3}$, $\varepsilon =1\%$. The dashed line shows the exact value of $q_{1}$ and $\mathsf{z_{u}}$ is path reconstructed by the uniform prior.}
    \label{fig:uniform_p}
\end{figure}

\section*{Acknowledgement}
The second author is supported by NSFC grants 11971133 and 11601107. The collaboration started when the second and third authors attended the 2018 Workshop on Inverse Problems and Related Topics in Chengdu funded in part by NSFC grant 11771068.


\begin{thebibliography}{99}

%\bibitem{Badia2001} A. El Badia and T. Ha-Duong. Determination of point wave sources by boundary measurements. {\em Inverse Problems}, 17(4), (2001): 1127--1139.

\bibitem{Baum1999} C.E. Baum, ed., Detection and Identification of Visually Obscured Targets, Taylor \& Francis, 1999.

\bibitem{Gilbert2004} J.L. Buchanan, R.P. Gilbert, A. Wirgin and Y.S. Xu, Marine Acoustics. Direct and Inverse Problems. Society for Industrial and Applied Mathematics, Philadelphia, PA, 2004.

%\bibitem{Bruckner2000} G. Bruckner and M. Yamamoto, Determination of point wave sources by pointwise observations: stability and reconstruction. {\em Inverse Problems}, 16(3), (2000): 723--748.

% \bibitem{CMG17} B. Chen, F. Ma and Y. Guo, Time domain scattering and inverse scattering problems in a locally perturbed half-plane. {\em Appl. Anal.}, 96(8), (2017): 1303--1325.

\bibitem{CGMS20} B. Chen, Y. Guo, F. Ma and Y. Sun, Numerical schemes to reconstruct three-dimensional time-dependent point sources of acoustic waves. {\em Inverse Problems}, 36(7), (2020): 075009.

\bibitem{Cheney2009} M. Cheney and B. Borden, Fundamentals of Radar Imaging. CBMS-NSF Regional Conference Series in Applied Mathematics, 79. Society for Industrial and Applied Mathematics (SIAM), Philadelphia, PA, 2009.

\bibitem{CK19} D. Colton and R. Kress, Inverse Acoustic and Electromagnetic Scattering Theory, 4th edition, Applied Mathematical Sciences, 93. Springer, Cham, 2019.

\bibitem{GF15} J. Garnier and M. Fink, Super-resolution in time-reversal focusing on a moving source, {\em Wave Motion}, 53, (2015), 80--93.

\bibitem{GHHLL16} Y. Guo, D. H\"{o}mberg, G. Hu, J. Li and H. Liu. A time domain sampling method for inverse acoustic scattering problems. {\em J. Comput. Phys.}, 314, (2016): 647--660.

\bibitem{GMC13} Y. Guo, P. Monk, D. Colton. Toward a time domain approach to the linear sampling method. {\em Inverse Problems}, 29, (2013): 095016.

\bibitem{Guo2016AA} Y. Guo, P. Monk, D. Colton, The linear sampling method for sparse small aperture data, {\em Appl. Anal.}, 95 (8), (2016): 1599--1615.

\bibitem{HKLZ19} G. Hu, Y. Kian, P. Li and Y. Zhao, Inverse moving source problems in electrodynamics. {\em Inverse Problems}, 35(7), (2019): 075001.

\bibitem{Jackson2007} J.D. Jackson, Classical Electrodynamics. John Wiley \& Sons, 2007.

\bibitem{JariE.Somersalo2006} J.P. Kaipio and E. Somersalo. Statistical and Computational Inverse Problems. Applied Mathematical Sciences, 160. Springer-Verlag, New York, 2005.

\bibitem{Kaipio2019} J.P. Kaipio, T. Huttunen, T. Luostari, T. L\"{a}hivaara and P.B. Monk, A Bayesian approach to improving the Born approximation for inverse scattering with high-contrast materials. {\em Inverse Problems}, 35(8), (2019): 084001.

\bibitem{Kolehmainen2011} V. Kolehmainen, T. Tarvainen, S.R. Arridge and J.P. Kaipio, Marginalization of uninteresting distributed parameters in inverse problems-application to diffuse optical tomography. { \em Int. J. Uncertain. Quantif.}, 1(1), (2011): 1--17.

\bibitem{LiEtal2012} J. Li, Z. Zeng, J. Sun and F. Liu, Through-wall detection of human being's movement by UWB radar, {\em IEEE Geoscience and Remote Sensing Letters}, 9(6), (2012): 1079--1083.

\bibitem{LDS20} Z. Li, Z. Deng, and J. Sun, Extended-sampling-Bayesian method for limited aperture inverse scattering problems. {\em SIAM J. Imaging Sci.} 13(1), (2020): 422--444.

\bibitem{LLSX20} Z. Li, Y. Liu, J. Sun and L. Xu, Quality-Bayesian approach to inverse acoustic source problems with partial data, {\em arXiv:2004.04609}, 2020.

% \bibitem{LiSunXu2020} Z. Li, J. Sun and L. Xu, An extended sampling-ensemble Kalman filter approach for partial data inverse elastic problems, {\em arXiv:2010.05753}.

\bibitem{LiuEtal2019} J. Liu, Y. Liu and J. Sun, An inverse medium problem using Stekloff eigenvalues and a Bayesian approach. {\em Inverse Problems}, 35(9), (2019): 094004.

% \bibitem{LS18} J. Liu and J. Sun, Extended sampling method in inverse scattering, {\em Inverse Problems}, 34, (2018): 085007.

\bibitem{LiuSun2019JCP} X. Liu and J. Sun, Data recovery in inverse scattering: from limited-aperture to full-aperture. {\em J. Comput. Phys.}, 386, (2019): 350--364.

\bibitem{NIO12} E. Nakaguchi, H. Inui and K. Ohnaka. An algebraic reconstruction of a moving point source for a scalar wave equation. {\em Inverse Problems}, 28(6), (2012): 065018.

\bibitem{Stuart2010} A. M. Stuart, Inverse problems: a Bayesian perspective, {\em Acta Numer.}, 19, (2010): 451--559.

\bibitem{Ohe20} O. Takashi. Real-time reconstruction of moving point/dipole wave sources from boundary measurements. {\em Inverse Probl. Sci. Eng.}, 28(8), (2020): 1057--1102.

\bibitem{Wang2018} K. Wang, Z. Zeng and J. Sun. Through-wall detection of the moving paths and vital signs of human beings. {\em IEEE Geoscience and Remote Sensing Letters}, 16(5), (2018): 717--721.

\bibitem{WGLL17} X. Wang, Y. Guo, J. Li and H. Liu, Mathematical design of a novel input/instruction device using a moving acoustic emitter. {\em Inverse Problems}, 33(10), (2017): 105009.

\bibitem{WGLL19} X. Wang, Y. Guo, J. Li and H. Liu, Two gesture-computing approaches by using electromagnetic waves. {\em Inverse Problems \& Imaging}, 13(4), (2019): 879--901.

\bibitem{Yang2020IP} Z. Yang, X. Gui, J. Ming, G. Hu, Bayesian approach to inverse time-harmonic acoustic scattering with phaseless far-field data. {\em Inverse Problems}, 36(6), (2020): 065012.

% \bibitem{ZGLL19} D. Zhang, Y. Guo, J. Li and H. Liu. Locating multiple multipolar acoustic sources using the direct sampling method. {\em Commun. Comput. Phys.}, 25(5), (2019) :1328--1356.

% \bibitem{Deng2007} Deng, Xiaoying, Yue Li, and Baojun Yang. "Ricker wavelet LS-SVM and its parameters setting for seismic prospecting signals denoising." second international conference on space information technology. Vol. 6795. International Society for Optics and Photonics, 2007.

\end{thebibliography}
\end{document}